\documentclass[11pt, reqno]{article}

\usepackage{fullpage}
\usepackage{enumerate}
\usepackage{setspace}
\usepackage{listings}
\usepackage{amsmath,amsfonts,amssymb,graphics,amsthm,comment}
\usepackage{hyperref}
\hypersetup{
    colorlinks=true,
    linkcolor=blue,
    citecolor=red,
    urlcolor=blue,
    pdfborder={0 0 0}
} 
\usepackage{dsfont}
\usepackage[T1]{fontenc}
\usepackage[utf8]{inputenc}   

\usepackage[framemethod=TikZ]{mdframed}
\usepackage{lipsum}
\mdfdefinestyle{MyFrame}{%
    linecolor=blue,
    outerlinewidth=2pt,
    roundcorner=20pt,
    innertopmargin=\baselineskip,
    innerbottommargin=\baselineskip,
    innerrightmargin=20pt,
    innerleftmargin=20pt,
    backgroundcolor=gray!50!white}
\usepackage{shadethm} 

\usepackage{graphicx}
\usepackage{xcolor}
\usepackage{wrapfig} 
\usepackage{xcolor}

\usepackage[font=sf, labelfont={sf,bf}, margin=1cm]{caption}

\usepackage{cleveref}
\usepackage{xcolor}

  \Crefname{theorem}{Theorem}{Theorems}
  \Crefname{thm}{Theorem}{Theorems}
  \Crefname{lemma}{Lemma}{Lemmas}
  
  \Crefname{lem}{Lemma}{Lemmas}
  \Crefname{remark}{Remark}{Remarks}
  \Crefname{prop}{Proposition}{Propositions}
  \Crefname{proposition}{Proposition}{Propositions}
  \Crefname{problem}{Problem}{Problems}
\Crefname{notation}{Notation}{Notations}
\Crefname{shadethm}{Theorem}{Theorems}
\Crefname{claim}{Claim}{Claims}
  \Crefname{defn}{Definition}{Definitions}
  \Crefname{corollary}{Corollary}{Corollaries}
  \Crefname{shadedefn}{Definition}{Definitions}
  \Crefname{section}{Section}{Sections}
  \Crefname{figure}{Figure}{Figures}
  \Crefname{shadeprop}{Proposition}{Propositions}
  \Crefname{exercise}{Exercise}{Exercises}
    \Crefname{assumption}{Assumption}{Assumptions}

\newtheorem{thm}{Theorem}[section]
\newshadetheorem{shadethm}{Theorem}[section]
\definecolor{shadethmcolor}{HTML}{BEFF33}
\definecolor{shaderulecolor}{HTML}{45CFFF}
\newshadetheorem{shadedefn}{Definition}[section]
\definecolor{shadethmcolor}{HTML}{CBFAFA}
\definecolor{shaderulecolor}{HTML}{CBFAFA}
\newshadetheorem{shadeprop}{Proposition}[section]
\definecolor{shadethmcolor}{HTML}{CAFAF2 }
\definecolor{shaderulecolor}{HTML}{CBFAFA}
\newtheorem{conjecture}[thm]{Conjecture}

\newtheorem{lemma}[thm]{Lemma}

\newtheorem{corollary}[thm]{Corollary}
\newtheorem{prop}[thm]{Proposition}

\newtheorem{defn}[thm]{Definition}

\newtheorem{question}[thm]{Question}

\numberwithin{equation}{section}

\theoremstyle{definition}
\newtheorem{remark}[thm]{Remark}

\def \min {\text{min}}
\makeatletter
\def\munderbar#1{\underline{\sbox\tw@{$#1$}\dp\tw@\z@\box\tw@}}
\makeatother

\def\cX{\mathcal{X}}

\def\cV{\mathcal{V}}
\def\cU{\mathcal{U}}

\def\cS{\mathcal{S}}

\def\cO{\mathcal{O}}
\def\cN{\mathcal{N}}
\def\cM{\mathcal{M}}

\def\cK{\mathcal{K}}
\def\cJ{\mathcal{J}}
\def\cI{\mathcal{I}}
\def\cH{\mathcal{H}}
\def\cG{\mathcal{G}}
\def\cF{\mathcal{F}}
\def\cE{\mathcal{E}}

\def\cB{\mathcal{B}}
\def\cA{\mathcal{A}}
\def \Psic {\Psi_{\texttt{contr}}}
\def \Psiuc {\Psi_{\texttt{ucontr}}}
\usepackage{wasysym}

\newcommand{\Pt}{\P^{{\sf t}}}
\newcommand{\Et}{\E^{{\sf t}}}

\def \tauendone {{\sf T}_{\mathsf{stg1}}}

\def\P{\mathbb{P}}
\def\E{\mathbb{E}}

\def\R{\mathbb{R}}
\def\Z{\mathbb{Z}}
\def\N{\mathbb{N}}

\def\R{\mathbb{R}}

\def  \p- {p\textunderscore}

\def\eps{\varepsilon}

\def \tauend {{{\sf T}_{\mathsf{stg}2}}}

\DeclareMathOperator{\Blue}{\mathsf{Blue}}

\DeclareMathOperator{\hittree}{{\sf HITtree}}
\DeclareMathOperator{\hitpast}{{\sf HITpast}}

\DeclareMathOperator{\cont}{\mathsf{Cont}}
\DeclareMathOperator{\tip}{\textsf{tip}}

\DeclareMathOperator{\Tip}{\textsf{TIP}}

\usepackage{extarrows}

\title{The local weak limit of the minimum spanning arborescence in the complete graph}
\author{Swarnadeep Bagchi \thanks{University of Victoria} \and Gourab Ray\thanks{University of Victoria, Research supported partially by NSERC 50311-57400}}
\date{}

\begin{document}
\maketitle
\begin{abstract}
We prove that the local limit of the minimum spanning arborescence in the complete graph (which is an oriented cousin of the minimum spanning tree in the complete graph) is the same as that of the uniform random tree, oriented towards infinity. The latter is known to be the critical Poisson Galton--Watson tree conditioned to survive.  This is in sharp contrast with the local limit of the minimum spanning  tree, which is known to be different from that of the uniform spanning tree (due to results of Addario--Berry \cite{AddarioBerry2013} and Addario--Berry, Griffiths and Kang \cite{PWIT_local_minimum}). Thus we demonstrate that introducing orientations changes the local geometry of the minimum spanning tree in a non-trivial manner.
\end{abstract}
\tableofcontents
\section{Introduction}\label{sec:intro}

The theory of minimum spanning tree (or \textbf{MST}) is now decently well developed with many interesting results being proved  in the past few decades \cite{AddarioBerry2013,Frieze_weight,addario_minimal,PWIT_local_minimum,lyons2006minimal,alexander_minimal}. Recently, the oriented cousin of the minimum spanning tree, called minimum spanning arborescence (or \textbf{MSA} for short), was studied in \cite{RS24}. The preliminary investigation in \cite{RS24} revealed that the MSA has many key features that are distinct from those of the MST. In this article we continue this investigation and study the local structure of the MSA in the complete graph. The local weak limit of the MST has been studied in depth \cite{AddarioBerry2013,PWIT_local_minimum} and one key conclusion in these papers is that the local limit of the MST is different from that of a uniform spanning tree in the complete graph (the latter is alternatively known as the critical Poisson(1) Galton--Watson tree conditioned to survive).
Our main theorem proves that (rather surprisingly!) the local weak limit of the MSA on the complete graph is the same as the critical Poisson(1) Galton--Watson tree conditioned to survive. In particular, the local weak limit of the MSA is \emph{not} the same as that of the MST.

We need a few definitions in order to state our main result. An \textbf{oriented multi-graph} is a pair $(V, \vec E)$ where $V$ is a (finite or countably infinite) set of vertices and $\vec E$ is a set of \textbf{oriented edges}. Informally, each element $\vec e\in\vec E$ is associated with two vertices which we call its \textbf{head} and  \textbf{tail}. We can imagine the oriented edge to be oriented from its tail towards its head. We allow multiple oriented edges to have a common head and a common tail, but we do not allow an oriented edge to have the same head and tail (i.e., we do not allow self-loops).  
Take the (unoriented) complete graph $K_n = ({\sf V}_n, {\sf E}_n)$, with ${\sf V}_n=\{1,2,\ldots,n\}$. Let $\vec {\sf E}_n $ be the set of oriented edges which consists of both orientations of the edges in ${\sf E}_n$. Call $\vec{K_{n}} := ({\sf V}_n, \vec {\sf E}_n)$ the \textbf{fully oriented} complete graph.  Assign i.i.d.\ Exponential$(1)$ weight $W(\vec e)$ to each oriented edge $\vec e\in \vec {\sf E}_n$. Fix a vertex $\partial \in {\sf V}_n$ (and call it the \textbf{boundary}). An \textbf{outgoing edge} from a vertex $v$ is an oriented edge whose tail is $v$. See \Cref{sec:local} for a more formal treatment of these notions.
\begin{defn}
   A \textbf{spanning  arborescence} $T_n$ of $(\vec{K_n}, \partial)$ is a subset of $\vec {\sf E}_n$, where each vertex in ${\sf V}_n \setminus \{\partial\}$ has exactly one outgoing edge, $\partial$ has no outgoing edge, and there are no cycles. 
\end{defn} 
\begin{figure}
    \centering
    \includegraphics[width=0.2\linewidth]{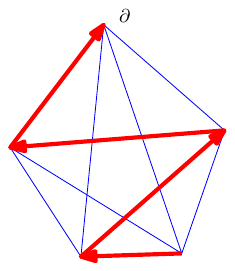}
    \caption{A spanning arborescence of $(K_5, \partial)$ in red.}
    \label{fig:placeholder}
\end{figure}

By absence of cycles, we mean the graph is acyclic even ignoring the orientations (see \Cref{sec:CLEB} for a more detailed treatment).
Another equivalent viewpoint is to treat $T_n$ as a spanning tree of $K_n$ with all its edges `oriented towards' $\partial$. 
The weight of a spanning arborescence is $\sum_{\vec e \in T_n} W(\vec e)$. Let $M_n$ be the (a.s. unique) minimum weight spanning arborescence which we call the minimum spanning arborescence (MSA). 

As mentioned above, we are interested in the local weak limit of the MSA  around a fixed root vertex. Informally, we say two rooted graphs $(G,\rho)$ and $(G',\rho')$ are at distance at most $1/R$ in the local topology, if the graph distance balls around the root of radius $R$ in them match (up to rooted isomorphisms preserving the root and the orientations). We defer to \Cref{sec:local} for more details, but for now we move on and describe the proposed local limit of $M_n$ around the vertex $1$ (all vertices are equivalent for us as we are in the complete graph).
Let $\sf T $ be the critical Poisson Galton--Watson tree conditioned to survive which can be described as follows. Take an infinite path $P$ and call the first vertex $1$. To each vertex attach a critical Galton--Watson tree with offspring distribution Poisson$(1)$. Now let $\vec {\sf T}$ be the tree obtained by orienting `every edge towards $\infty$': orient the path $P$ from $1$ to $\infty$ and orient each tree attached to $P$ towards $P$. It is well-known (see \cite{kesten86} or \cite[Theorem 46]{curiennotes}) that if we take a critical Galton--Watson tree with Poisson$(1)$ offspring distribution and condition it to have $n$ vertices, then the local limit of this object exists and is described by ${\sf T}$. It was proved in the celebrated work of Grimmett \cite{gri_GW} that the local limit of the uniform spanning tree (a spanning tree chosen uniformly at random, no orientations involved) converges to ${\sf T}$ appropriately rooted (see also \cite{nachmias_peres_local} for a more general result). The local limit of the MST, on the other hand, has (arguably) a more complicated description, and, in particular, it is not ${\sf T}$. Our main theorem in this article is:

\begin{figure}
    \centering
    \includegraphics[width=0.6\linewidth]{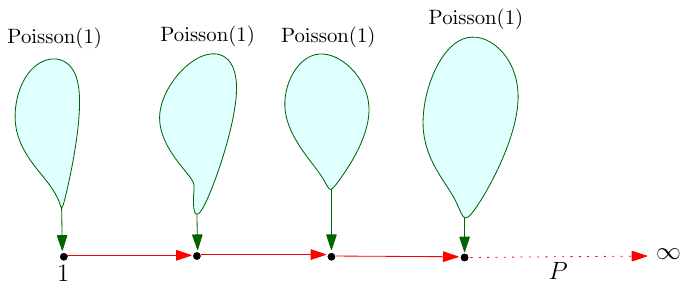}
    \caption{This shows $(\vec {\sf T},1)$ which has the red path $P$ from $1$ to $\infty$ and to all the vertices  Poisson$(1)$ Galton--Watson incoming trees are attached.}
    \label{limit tree}
\end{figure}

\begin{thm}\label{thm:main}
The sequence of rooted oriented graphs $(({\sf V}_n, M_n),1)$ converges weakly to $(\vec{\sf T}, 1)$ in the local topology of rooted marked graphs.
\end{thm}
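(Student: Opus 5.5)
We prove \Cref{thm:main} by running the Chu--Liu--Edmonds algorithm on $(\vec{K_n},\partial)$ and exploring its output locally from vertex $1$.

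\emph{The contraction algorithm.} The algorithm works in rounds. Every non-boundary vertex picks its cheapest outgoing edge. The resulting functional graph has cycles, and each cycle is contracted into a single vertex. For an edge leaving a contracted cycle $C$, its weight is reduced by the weight of the cycle edge it would replace; here the edge is an out-edge of $C$, so the relevant quantity is the weight of the chosen out-edge of its tail. The procedure is then repeated on the contracted graph.

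\emph{Why the first round already gives the limit.} By the memoryless property, after subtracting the minimum out-weight at each tail, the reduced out-weights are again i.i.d.\ Exponential on the unchosen edges. The cheapest-out-edge graph on $\vec K_n$ is therefore a uniform random mapping $[n]\to[n]$ with no fixed points. The local limit of a uniform random mapping seen from a fixed vertex is classical: the forward orbit of $1$, together with Poisson(1) Galton--Watson trees hanging off it, oriented toward the orbit. Locally the forward orbit looks like an infinite path, since cycles have length of order $\sqrt n$. This is exactly $\vec{\sf T}$.

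\emph{Plan.}
\begin{enumerate}
\item Formalise one round of contraction and show that the contracted weighted graph is again a complete oriented graph on the cycle-classes, with i.i.d.\ Exponential weights, conditionally on the first-round functional graph. The key input is the memoryless property applied to each tail's out-edges.
\item Show that the final arborescence $M_n$, restricted to a ball of radius $R$ around $1$, coincides with the first-round functional graph $F_n$ restricted to the same ball, with probability $1-o(1)$.
\item Conclude by the random-mapping local limit, which can be checked by a direct exploration coupling of $F_n$ around $1$ with $\vec{\sf T}$.
\end{enumerate}

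\emph{Step 2, the main obstacle.} In Chu--Liu--Edmonds, the edges of $F_n$ inside a tree component survive, and only one edge per contracted cycle is removed and replaced by a chosen outgoing edge at a later level. The $R$-ball of $1$ in $F_n$ has size $O_P(1)$, and its forward path reaches a cycle only after about $\sqrt n$ steps. So, with high probability, the ball contains no cycle edge, and all its edges survive into $M_n$, including the forward path of length $R$.

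The difficulty is the converse: $M_n$ might contain extra edges entering the ball. These would come from later-level edges whose tail is some contracted supervertex whose chosen out-edge lands in the ball near $1$. To rule this out, I would bound the total number of edges added in later rounds. The first round produces about $\tfrac12\log n$ cycles. Each later round again forms a random mapping on the supervertices, and the number of supervertices shrinks geometrically in the log. Hence the total number of replaced edges is $O(\log n)$, or at most $o(n)$ in any case.

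By exchangeability, each such edge has its head uniform among the original vertices inside the target class. The probability that any of these heads lands in the $O(1)$-sized ball is therefore $o(1)$. This exchangeability has to be tracked carefully through the contractions: one must check that the head chosen inside a contracted class is uniform over its vertices. That check is where I expect most of the technical work to lie.
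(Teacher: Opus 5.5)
\paragraph{Verdict: Step 2 has a genuine gap.} You claim that, with probability $1-o(1)$, no edge chosen in a later round has its head in the $R$-ball. What you actually need is the stronger statement that no such head lands in the past ${\sf f}$ of the ball, meaning the in-trees of $F_n$ hanging off the first $R+1$ vertices of the orbit of $1$. The reason is that a later edge landing anywhere in ${\sf f}$ puts tree edges of $F_n$ inside the ball onto a higher-level cycle, where they can be deleted. So ``tree edges of $F_n$ survive'' is only justified for round-$1$ cycles. Both the claim you state and the one you need are false.

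\paragraph{Why the claim fails.} Conditionally on $F_n$, the memoryless property does show that a supervertex $v$ picks a head uniform in ${\sf V}\setminus\cont(v)$. But this set contains the non-cyclic vertices of $v$'s own component, and landing there creates a new cycle consisting of the supervertex plus a tree path. Several consequences follow.
\begin{itemize}
\item The contracted graph is not a complete graph on cycle-classes.
\item The number of cycle-roots does not shrink geometrically.
\item The root of a macroscopic component keeps contracting, snowballing through of order $n$ contractions, until $\cont(v)$ is nearly all of ${\sf V}$. Its heads are then uniform over a small set.
\end{itemize}
Meanwhile, $\partial$'s own in-tree $T_\partial$ in $F_n$ is an $O_P(1)$ critical Galton--Watson tree: its in-degree is $\mathrm{Bin}(n-1,(n-1)^{-1})$, so $T_\partial=\{\partial\}$ with probability $\to e^{-1}$. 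Since $T_\partial$ grows only through later picks landing in it, the first later pick that lands in $T_\partial\cup{\sf f}$ lands in ${\sf f}$ with conditional probability $|{\sf f}|/(|{\sf f}|+|T_\partial|)$. Hence ${\sf f}$, and the ball itself, is hit with probability bounded away from $0$. Both premises of your union bound fail: there are not $O(\log n)$ later edges, and their heads are not uniform over $\Theta(n)$ vertices.

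\paragraph{What is missing.} You need an argument that these intrusions are typically undone at uncontraction. When the giant supervertex hits ${\sf f}$, the ball is undisturbed provided the next exit from the supervertex containing the new cycle leaves from a vertex not contaminated by ${\sf f}$. In that case the intruding edge is exactly the one erased.

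The paper makes this tractable as follows.
\begin{itemize}
\item It orders the exploration so that the in-tree is exposed first. Then ${\sf f}$ can only be hit from non-singleton tips.
\item It colours contaminated vertices blue and shows the blue set stays of size at most $\exp((\log\log n)^4)$.
\item It shows that the giant has size at least of order $n/\log\log n$.
\item It uses a coupling of futures (\Cref{lem:future_couple}) to show the tail of the final exit edge is uniform over the giant.
\end{itemize}
See \Cref{lem:no_bad_no_disturb} and \Cref{prop:bad_B}. Your round-based framework might be repaired along similar lines, but as written Step 2 asserts that an event of non-vanishing probability is negligible.
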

See \Cref{sec:local} for a more detailed treatment of the local topology in our context of oriented graphs.

One might wonder looking at \Cref{thm:main} whether by some miracle the distribution of the MSA is the same as that of the UST oriented towards $\partial$ (picked with probability proportional to the weights). This is easily seen to be not the case by considering $K_3$ (see \Cref{fig:MSA_triangle}). We also point out here that the main tools used to study the MST, which are Kruskal's or Prim's algorithm \cite{kruskal1956shortest} are not applicable in the oriented setup (see \Cref{fig:kruskal_counter}). In fact the distribution of the MSA is heavily dependent on the distribution of the weights (see \cite[Proposition 3.1]{RS24}).

\begin{figure}[h]
    \centering
    \includegraphics[width=0.8\linewidth]{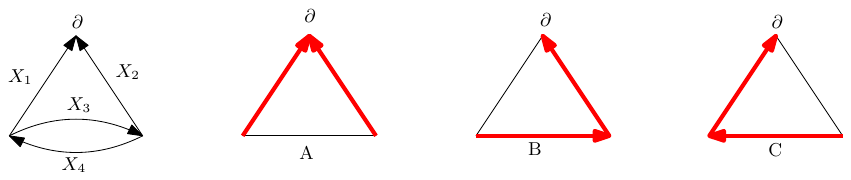}
    \caption{Left: The fully oriented $K_3$ with weights (oriented edges going out from $\partial$ are ignored as they are irrelevant). The arborescence $A$ has probability $1/4$ ($=\P(X_1<X_3,X_2<X_4)$) while each of arborescences $B$ and $C$   has equal probability, which must be $3/8$.}
    \label{fig:MSA_triangle}
\end{figure}

Instead, we rely on an algorithmic procedure due to Chu-Liu, Edmonds and Bock (CLEB) to generate the MSA which was revisited and analyzed in depth in \cite{RS24}. We recall this algorithm in \Cref{sec:CLEB}. For now, it suffices to point out that the CLEB algorithm is much less Markovian compared to Kruskal's or Prim's algorithm. In fact, we rely crucially on the memoryless property of the exponential distribution to extract some Markovian property, and our proof does not work if the weights have some other distribution, for example Uniform$[0,1]$ (see \Cref{sec:open} for some open problems in this direction).

\begin{figure}
    \centering
    \includegraphics[width=0.5\linewidth]{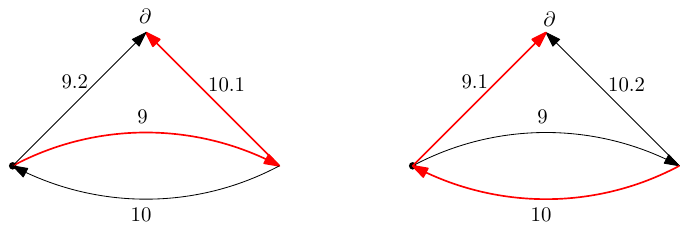}
    \caption{An example showing that any algorithm which is only a function of the ordering of the edges according to their weights cannot output the MSA (the red edges depict the MSA). Consequently, any reasonable variation of Kruskal's algorithm cannot output the MSA.}
    \label{fig:kruskal_counter}
\end{figure}

\subsection{Two different heuristics behind \Cref{thm:main}.}
In this section we give some heuristic justification of why \Cref{thm:main} holds. Let $U_n$ be the set of minimum weight outgoing edges from all the vertices in $\vec K_n $ except $\partial.$ It is not too hard to see that this breaks up $\vec K_n$ into several components, and each component not containing $\partial$ is an \emph{oriented cycle rooted tree}. More precisely, each component has a unique oriented cycle with trees attached to the vertices of the cycle, all oriented towards the cycle. The component containing $\partial$ is a tree oriented towards $\partial$. It is easy to see that the number of oriented edges incoming into a vertex is distributed as Binomial$(n-1,(n-1)^{-1}) \approx $ Poisson$(1)$ since the weights are i.i.d.\ continuously distributed. Some more thought in this direction allows us to conclude that the local structure of $U_n$ is indeed the same as $\vec {\sf T}$. Therefore, another way to view \Cref{thm:main} is that the local structure of the MSA is `undisturbed' if one `repairs' $U_n$ to obtain $M_n$. It is not at all obvious that this is the case, as there can be potentially many arborescences with weight close to the minima, which look completely different from the MSA. In fact the CLEB algorithm (see \Cref{sec:CLEB}) exactly tells us a technique to do this repair, perhaps somewhat implicitly, and the whole argument in the article is to establish such local stability. See \cite{ChatterjeeRay2024} for some recent results on stability of optimization problems under perturbations. 

Now we present another perspective behind the local structure of the MSA by considering the MSA as a subgraph of the ambient graph $\vec K_n$. Observe that  scaling the i.i.d.\ Exponential$(1)$ weights by a positive constant does not change the distribution of $M_n$ (although it does affect the weight of $M_n$). In this section, suppose the weights are i.i.d.\ Exponential $((n-1)^{-1})$ (which has expectation $n-1$ and in turn the minimum weight outgoing edge has order 1 weight). After this scaling the order statistic of the weights of the outgoing edges from any vertex in $\vec K_n$ converges to the arrival times of a Poisson process with rate $1$ as $n \to \infty$. Also the same holds for the set of incoming edges to any vertex.

We now describe an object which is the local limit of the weighted fully oriented complete graph where the weights are as above. We call it the oriented Poisson weighted infinite tree (OPWIT) which is the so-called oriented version of the Poisson weighted infinite tree or PWIT (see e.g. \cite[Section 4.2]{AS_90}). We describe the OPWIT recursively as follows. We start with a root vertex $\rho$. Given any oriented edge $\vec e$ going from $u $ to $v$, we define $v$ to be the \textbf{head} of  $\vec e$ (denoted $\vec e_+$) and $u $ to be the \textbf{tail} of $\vec e$ (denoted $\vec e_-$). In the OPWIT, 
the set of outgoing edges from $\rho$ is countably infinite. There is also a countably infinite set of oriented edges incoming to it. Call the heads of these outgoing edges and the tails of these incoming edges the children of $\rho$. The weights on the outgoing edges, when ordered in increasing order, are given by the arrival times of a Poisson process with rate 1. The same is true for the incoming edges (for an independent Poisson process). Now recursively, having defined a set $\cS$ of oriented edges and their endpoints, choose any vertex $v$ whose children are not defined. The vertex $v$ has infinitely many outgoing edges, none of whose heads are in $\cS$. The vertex $v$ also has infinitely many incoming edges one of which is from its parent, and none of the tails of the other incoming edges are in $\cS$. The heads of these outgoing edges from $v$ and the tails of the incoming edges to $v$ (except the one from its parent) are the children of $v$.
 The ordered weights of the outgoing edges from $v$ and the ordered weights of the incoming edges to $v$ (except the one coming from the parent)  are again the arrival times of two i.i.d.\ Poisson processes with rate $1$, independent of everything else. This is continued \emph{ad infinitum}.  Note that this describes naturally a set of vertices in the $n$th generation for $n \in \N$, with $\rho $ being the common ancestor of every vertex.

Let $W_n=(W(\vec e))_{\vec e\in{\vec {\sf E}_n}}$.
It can be shown that the weighted rooted graph $(\vec K_n, W_n, \rho)$ converges locally to the OPWIT rooted at $\rho$. The topology used for this convergence can be made precise, see e.g. \cite[Section 2.2]{AS_90}. Roughly, the weight on an edge can be thought of as the length of the edge and the orientation as a `mark' on the edge. This defines a natural metric on the graph which can be used to define a local topology on rooted, oriented, weighted graph. We skip these details here and refer to \cite{aldous_assignment} for a proof of the same theorem in the unoriented setup. We will not prove this result here as it is almost identical to the proof of the unoriented case, and is somewhat tangential to the main agenda of the article.

Now we want to point out that the component of $\rho $ of the MSA on OPWIT is the same as that described in \Cref{thm:main}. One way to see this precisely is to consider the so-called wired limit of the OPWIT:  glue all the vertices in the $n$th generation of OPWIT into a single vertex $\partial$, call it OPWIT$_n$. The minimum spanning arborescence $\tilde M_n$  of OPWIT$_n$ with boundary $\partial$ (although it still has infinitely many vertices) is well defined and is given  simply by taking the union of the minimum weight outgoing edge from every vertex and considering the component of $\rho$. This is easy to see as there are no oriented cycles in OPWIT$_n$. Thus the component of $\rho$ has a path going from $\rho $ to $\partial$ following the minimum weight outgoing edges. It is not too hard to see that the set of incoming edges to a vertex which belongs to $\tilde M_n$ is given by an inhomogeneous Poisson process $N(\lambda(t))$  with rate $\lambda (t)= e^{-t}$. Thus the total number of incoming  oriented edges into any vertex is Poisson$(1)$ independent of everything else. Using the memoryless property of Exponential, this argument can be extended to show that the incoming tree has distribution given by a Poisson $(1)$ Galton--Watson tree. 

This gives a heuristic justification of the appearance of the limit in \Cref{thm:main}. Of course, we need to show that the order of the limit of the ambient weighted graph and the MSA can be exchanged, which is the primary goal of the rest of the paper. 

\subsection{Outline of the proof}\label{sec:outline}
The key tool is the CLEB algorithm which was developed back in the 70s by computer scientists \cite{Chu-Liu,Edmonds,bock,Karp} in order to implement a fast algorithm to sample the MSA. This was revisited by Ray and Sen in \cite{RS24} with the goal of studying the geometry of MSA. The description of the algorithm is somewhat involved, so we attempt to give a rough description of the algorithm in the special case of the Exponential distribution, hiding some of the details. See \Cref{sec:CLEB} for more details.
\medskip

\noindent
\textbf{A primer on the CLEB algorithm.}
\begin{figure}
    \centering
    \includegraphics[width=0.5\linewidth]{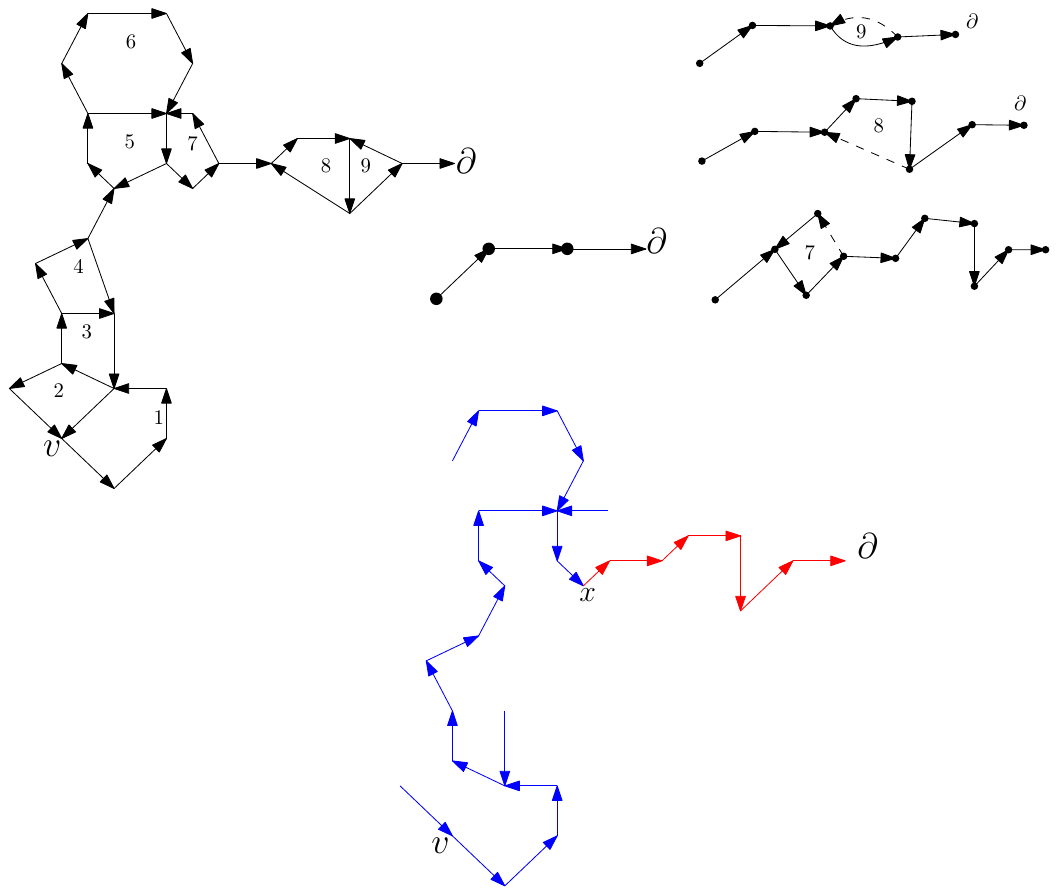}
    \caption{An illustration of the CLEB algorithm. Top left: The cycles contracted in order are numbered. Top middle: the arborescence obtained after fully contracting every cycle. Top right: the first three steps of the uncontraction process. Bottom: the arborescence obtained after fully running the uncontraction step. The future of $x$ is shown in red and the past of $x$ is shown in blue.} 
    \label{fig:CLEB}
\end{figure}
We encourage the reader to refer to \Cref{fig:CLEB} while reading this primer.
Suppose we are given a weighted fully oriented graph $G$.
The CLEB algorithm proceeds in two phases, called the \emph{contraction phase} and the \emph{uncontraction phase.} The contraction phase is described as follows. Pick a vertex $v \neq \partial$ and expose the minimum weight outgoing edge $\vec e$ and subtract the weight of $\vec e$ from the weight of all the other outgoing edges from $v$ (this weight subtraction is a \emph{gauge transform} which keeps the MSA unchanged).  Declare $v$ to be \emph{explored}.   Then pick another unexplored vertex except $\partial$ (arbitrarily) and expose the minimum weight outgoing edge and subtract weights as before. Keep doing this until a cycle is created. Then \emph{contract} the cycle (glue all the vertices of the cycle together and remove the self-loops thus created). Declare this contracted vertex unexplored. Continue until there is no unexplored vertex left except $\partial$. This marks the end of the contraction phase.

 Now let us describe the uncontraction phase (see \Cref{fig:uncontraction}). Order the cycles contracted chronologically. Now `open' the last contracted cycle. This creates an object which is a spanning  arborescence except one of its vertices has been inflated into a cycle. This object has exactly one vertex with two outgoing edges. Remove the edge belonging to the cycle. Iterate this process: open the cycles in reverse chronological order and remove edges as described. In the end, we obtain an arborescence $T$ of the original graph $G$. The CLEB algorithm asserts that $T$ is the minimum spanning arborescence. 
 
\medskip

\noindent
\paragraph{The strategy of the proof.}
Let us first define a generalization of the spanning arborescence of a graph with a boundary which we simply call arborescence.
\begin{defn}\label{def:general_arborescence}
Given an oriented graph $G = (V, \vec E)$, an \textbf{arborescence} is a subset of oriented edges such that every vertex has either a single outgoing edge or no outgoing edge, and there are no cycles. The set of vertices which have no outgoing edge is called the boundary of the arborescence. 
\end{defn}
 So in other words, if the boundary of an arborescence is a single vertex $\partial$ then this arborescence is a spanning arborescence of $(G, \partial)$. Now simply observe that every step of the CLEB algorithm produces an arborescence in a graph formed by possibly contracting some cycles in the original graph $G$. Furthermore, the boundary of this arborescence is simply the set of unexplored vertices. If there are no more vertices left to explore except $\partial$ in step $i$, we obtain a spanning arborescence of the graph in step $i$ with boundary $\partial$.

Observe that the CLEB algorithm allows an arbitrary order of the choices of $v$ to explore. This along with the memoryless property of exponential random variables simplifies the law of the CLEB process considerably: namely, conditioned on the oriented edges exposed and the weights, the choice of the minimum weight outgoing edge from an unexplored vertex $v$ is uniform among the outgoing edges in the contracted graph in that step. Now consider the following choice of ordering of vertices to explore. Arbitrarily choose the first vertex. Suppose in a generic step we expose an edge $\vec e$. If a cycle $C$ is created which is contracted to $v_C$, choose $v_C$ in the next step. Otherwise, choose $\vec e_+$. Stop if $\vec e_+ = \partial$. This creates the following rather elegant stochastic process: perform a simple random walk and as soon as a cycle is created, contract it. Then continue doing simple random walk started from the contracted vertex in the contracted graph, and stop if $\partial$ is hit. We refer to \href{https://graymath24.github.io/webpage_G/pictures.html}{this URL} for a simulation of this process in the square lattice. We call this process the CLEB walk. (This process was also called the loop contracting random walk in \cite{RS24}, but we stick to CLEB walk in this article). 

This gives a much simpler method to analyze the CLEB process. Pick a vertex and perform CLEB walk until $\partial $ is hit. Suppose $W$ is the set of vertices exposed (which includes $\partial$). Now pick another arbitrary unexplored vertex and perform CLEB walk until $W$ is hit. Now replace $W$ by the union of $W$ and the set of new vertices thus exposed. Continue until there is no vertex left to explore except $\partial$. Then we uncontract the cycles in the uncontraction phase to obtain the MSA. Thus if we have good control on the CLEB walk, particularly of the cycles contracted involving the vertices near the root, we are in good shape to prove a local limit theorem. As it turns out, in our setup, too many cycles are contracted in the first walk (which creates $W$) itself. We will now attempt to explain this.
\begin{figure}[h]
    \centering
    \includegraphics[width=0.8\linewidth]{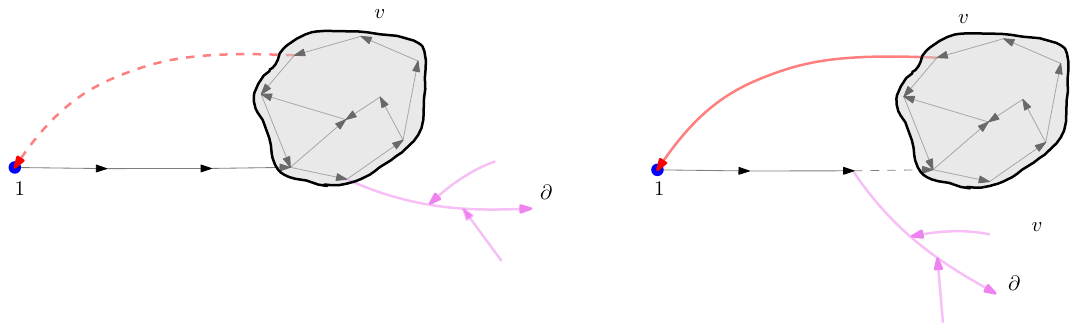}
    \caption{The giant is marked in gray. Left: If we hit 1 and escape from the giant, then the edge hitting 1 is erased in the uncontraction process. Right: If we hit 1 and then we do not escape from the giant, then the giant and the red edge is added to 1.}
    \label{fig:good_bad}
\end{figure}

The CLEB walk is particularly nice to analyze if the cycles created are of somewhat small sizes.
However, this is unfortunately not the case in the complete graph, let us argue why. If we perform the CLEB walk started from $1$, then in each step, the probability to hit back $1$ is roughly $O(1/n)$. Consequently it would take roughly $O(n) $ steps to see a cycle involving $1$ (or exposed vertices near $1$). Also, for the same reason, the time taken to hit $\partial $ is also $O(n)$.

Observe that a vertex $v$ of the contracted graph in the $i$th step contains a subset of the vertices of the original graph, which were contracted to create $v$. We call this set of vertices, $\cont(v)$ (a formal definition is in \Cref{sec:2 and 3}). Now notice that because of the birthday paradox, cycles start getting created by time $\sqrt{n}$. In fact, by time $O(\sqrt{n})$, a vertex $v$ will be created with $\cont(v)$ being at least $C\sqrt{n}$ with very high probability. Let's call this vertex the `giant' in this step.  From this point on, we can break up the CLEB walk into excursions, which start from the giant and end by either hitting the giant or `swallowing' it by hitting a vertex in its past. Since we are on the complete graph, if the giant size at the end of one excursion is $g$, the next excursion length is roughly Geometric with parameter $g/n$.
This creates a self-reinforcing mechanism which grows the giant and the excursions get smaller and smaller as the giant grows. 
  Thus at any time $t \gg \sqrt{n}$, the giant size is also of order $t$ and there are still $O(\sqrt{n})$ vertices which are not in the giant (in fact it is much smaller). We call this the `snowball effect'.
  
Now by time $n$, $1$ has a positive chance of being hit before $\partial$ is hit. Thus in order to understand the geometry of the MSA near $1$, we need to untangle the contracted vertices that have snowballed by time $O(n)$ into a giant, contracted vertex of size $O(n)$.
\begin{figure}
    \centering
    \includegraphics[width=0.5\linewidth]{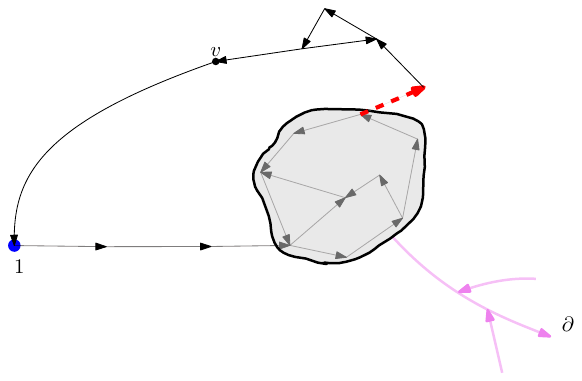}
    \caption{A bad event which can happen with good probability in the naive approach}
    \label{fig:bad_naive}
\end{figure}

Now we observe that there is a certain good event which, if it happens, ensures that the neighbourhood of 1 is `undisturbed'. We now describe this event.
Suppose in a certain step we explore $v$ and we hit $1$ by exposing an oriented edge $\vec e$. This creates a cycle, call it $C$. If in the next step we `escape' using an oriented edge outgoing from $v$, then the neighborhood of 1 remains undisturbed. Indeed, if this happens, in the uncontraction phase when we uncontract $C$, we necessarily erase $\vec e$ and the neighborhood of $1$ remains undisturbed (see \Cref{fig:good_bad}). It is now clear that what matters is the vertex from which the last excursion starts is a good vertex like in the left of \Cref{fig:good_bad}. The actual definition of these good vertices
is somewhat more involved and is described in detail in \Cref{sec:2 and 3} (we call the bad vertices blue there). But for the moment, this describes the essence of the type of events we need to consider.

But what is the probability that such a good event happens? If $v$ is actually the giant, then $v$ takes up most of the mass of the set of exposed vertices, and hence with high probability the good event happens. However, because of the same reason, the good event cannot happen if $v$ is not the giant: in this case escaping from $v$ is minuscule as the process escapes from the giant with overwhelmingly high probability (see \Cref{fig:bad_naive}). Thus if we choose this route, we need to track the cycle lengths whenever a neighborhood of 1 is hit from a non-giant vertex. The number of times the explored vertex is non-giant is of order $n$ and hence there is a positive chance that the neighborhood of 1 is hit order 1 times from a non-giant vertex. Uncontracting and tracking the cycles thus created is somewhat complicated and perhaps intractable.

In order to track the paths which `attach' to 1 through events depicted in \Cref{fig:bad_naive}, we use the flexibility of the CLEB algorithm even further as follows. We divide the algorithm into three stages. In \textbf{stage 1}
we expose the minimum weight outgoing edges  from all the vertices except $\partial$, call them $\cE$. Observe that this naturally defines an oriented path started from any vertex $v$ by first considering the edge $\vec e \in \cE$ whose tail is $v$ and then considering the edge ${\sf f} \in \cE$ whose tail is $\vec e_+$ and so on. Iterating this we obtain a path which either ends in $\partial$ or ends up creating a cycle. Call this path the `future' of $v$.
Now retain only those vertices, whose future hits $1$. This is the `minimum weight incoming tree' into $1$, which is indeed an arborescence with boundary $1$ (see \Cref{def:general_arborescence}). We call this arborescence ${\sf t}$. It is not hard to see that the distribution of the number of incoming edges to any vertex is roughly Binomial $(n-1,(n-1)^{-1}) \approx $Poisson$(1)$. This essentially shows that the incoming tree is distributed as a Poisson(1) Galton--Watson tree, which ensures that the local structure matches with the limiting arborescence ${\sf T}$, at least up to this point. Let us mention one small detail here: we don't really expose all of $\cE$, but only the incoming tree using a standard depth first search exploration procedure, so the remaining edges can be independently explored using CLEB. In \textbf{stage 2} we perform CLEB walk from $1$ until $\partial$ is hit. Then in \textbf{stage 3} we do CLEB walk from the remaining vertices in arbitrary order.

Now observe that once this incoming tree ${\sf t}$ is exposed, the law of the minimum weight outgoing edges from the unexplored vertices is distributed uniformly  among those edges \emph{which do not   hit ${\sf t}$}. So now, if an explored vertex $v$ is a singleton (it is a vertex of the original graph $G$ which is being explored),  it cannot hit ${\sf t}$. So the only way a  non-giant explored vertex hits ${\sf t}$ is if it is not a singleton (i.e. it is a contracted vertex, but not the giant). Next we argue that the set of times  when the vertex $v$ explored has $\cont(v)$ strictly bigger than 1, and is not a giant is $o(n)$. Indeed, because of the snowball effect argued above, a small cycle has to be created in the middle of the excursions between the times when the giant is hit or swallowed. This has a probability which is small enough so that a union bound allows us to conclude. 

Finally, in stage $3$, we see that we have already exposed $O(n)$ many vertices. So each of the CLEB walks is of size $O(1)$ with high probability, since in every step, the walk has a positive chance to hit an exposed vertex. Again, since ${\sf t} $ cannot be hit from a singleton, the only way to hit ${\sf t}$ is from a contracted vertex. The set of times when this is the case is $o(n)$ in expectation. Thus a union bound shows that ${\sf t}$ is not even hit in stage 3 with very high probability.

In order to make the above strategy work, we need some quantitative estimates of the sizes of the giant,  the growth of the bad vertices inside the giant (which we color blue later), which needs more in depth analysis of the CLEB walk in stage 2.

\subsection{Local topology}\label{sec:local}
In this section, we describe the metric space of \emph{rooted marked graphs} and we will describe the topology for convergence of the MSA in the language of the local convergence for rooted marked graphs. We refer to \cite{AL_unimodular} for a detailed exposition of this topic, and recall here just the essentials.

A \textbf{multi-graph} $G=(V,E)$ consists of a set of vertices $V$ and edges $E$. Any two distinct vertices can be joined by a finite number of edges; the collection of all such edges is denoted by $E$. We emphasize that there is no self-loop in a multi-graph. We will only consider the multi-graphs where $V$ and $E$ are at most countable. For $u,v\in V$, we denote the set of edges between $u$ and $v$ by $E_{uv}$. A \textbf{rooted multi-graph} is a pair $(G,o)$, where $G=(V,E)$ is a multi-graph and $o\in V$ is a fixed vertex. Furthermore, a rooted or non-rooted multi-graph is called \textbf{locally finite} when each of its vertices has finite degree. In this article, we will only work with locally finite multi-graphs. A \textbf{rooted marked graph} $(G,o,\mathfrak m)$ is a rooted multi-graph $(G,o)$ with a function $\mathfrak m:E\to\cM$, where $\cM$ is a complete and separable metric space often called the space of \textbf{marks}.

We can consider a \emph{rooted oriented multi-graph} $(G,o)$ as a rooted marked graph $(G,o,\mathfrak m)$, where the space of marks is $V\times V$ with discrete metric, such that $\mathfrak m(e)=(u,v)$ or $(v,u)$ if $e\in E_{uv}$. Let $\vec E:=(e,\mathfrak m(e))_{e\in E}$ and an element of $\vec E$ is denoted by $\vec e$. The set $\vec E$ is called the set of \emph{oriented edges}. If $\mathfrak m(e)=(u,v)$, we call $v$ the \textbf{head} of $\vec e$ and $u$ is the \textbf{tail} of $\vec e$ and denote them as $\vec e_{+}=v,\vec e_{-}=u$. In this article, we will often consider \textbf{fully oriented} graphs which can be defined as follows: take a multi-graph $G$ and orient each edge in both directions, the new oriented multi-graph which we obtain, is called fully oriented graph (sometimes called fully oriented version of $G$). We also say two rooted oriented multi-graphs are isomorphic if there is a graph isomorphism between them which preserves the marks and the root. We let $\cG^{\bullet}$ be the space of equivalence classes of rooted oriented multi-graphs.

Let $(G,o,\mathfrak m)$ be a marked graph. The graph distance in $G$ is the usual graph distance on the unoriented graph $G$, which is simply the length of the shortest path connecting two vertices.
We will use the notation $B_{(G,o,\mathfrak m)}(r)$ to denote the ball of radius $r$ around $o$ in $G$. A random rooted marked graph is simply a random variable taking values in $\cG^{\bullet}$.

\begin{defn}\label{defn:loc_conv}
A sequence of random rooted marked graphs $\{(G_n,o_n,\mathfrak m_n)\}_{n\ge 0}$ converges locally weakly to $(G,o,\mathfrak m)$ if for all $(H,u,\mathfrak q)\in\cG^{\bullet}$ and $r\ge 0$,
$$\P(B_{(G_n,o_n,\mathfrak m_n)}(r)\simeq (H,u,\mathfrak q))\to\P(B_{(G,o,\mathfrak m)}(r)\simeq (H,u,\mathfrak q)),$$
as $n\to\infty$, where $a \simeq b$ means $a$ is isomorphic to $b$ as rooted marked graphs.
\end{defn}
\begin{remark}
It can be shown that this definition of local weak convergence is equivalent to the usual definition of weak convergence in the local topology. The local topology is induced by the metric $d_{\text{loc}}$ defined as follows. We define $d_{\text{loc}}((g,o,\mathfrak m), (g',o',\mathfrak m')) =\frac1{R+1}$ where $$R = \max\{r \ge 0: B_{(G,o,\mathfrak m)}(r)\simeq B_{(G',o',\mathfrak m')} \}.$$ 
\end{remark}
Observe that the sequence of MSAs of fully oriented complete graph with i.i.d. Exponential$(1)$ edge weights lie in $\cG^{\bullet}$, also $\vec{\sf T}$ we have defined in \cref{sec:intro} lies in $\cG^{\bullet}$. Therefore we need to verify \Cref{defn:loc_conv} to prove \cref{thm:main}.
\section{The Chu-Liu Edmonds Bock (CLEB) algorithm}\label{sec:CLEB}
The key tool in the analysis is an algorithm to sample the MSA, which works in a general graph. This algorithm is attributed independently to Chu and Liu \cite{Chu-Liu}, Edmonds \cite{Edmonds} and Bock \cite{bock} (see also Karp \cite{Karp}). We describe a version of the algorithm which is relevant for this article, we refer to \cite{RS24} for relevant background and exposition.

In what follows, we work with an oriented multi-graph (see \Cref{sec:local} for a formal definition). We denote the head and tail of an oriented edge $\vec e$ by $\vec e_{+}$ and $\vec e_{-}$, respectively. An \textbf{oriented path} is a sequence of distinct oriented edges $(\vec e_0, \vec e_1, \ldots, \vec e_{k})$ such that the head of $\vec e_j$ is the tail of $\vec e_{j+1}$ for $0 \le j \le k-1 $, and such an oriented path is a \textbf{cycle} \footnote{In the literature on {oriented} graphs, an oriented path and an oriented cycle are sometimes referred to as a trail and a circuit, respectively.} if the head of $\vec e_k$ is the tail of $\vec e_0$.  The endpoints of these edges are called the vertices of the path or cycle.

Before describing the CLEB algorithm, let us define two graph operations that will be used throughout the paper (see \Cref{fig:uncontraction}).

\begin{itemize}

\item {\textbf{ Contraction.}} Given an oriented multi-graph $G = (V, \vec E)$ with boundary {$\partial \subset V$} and a cycle $C$ in $G$ not passing through $\partial$, this operation {\bf contracts  the cycle $C$} and outputs another {oriented} multi-graph $G' = (V', \vec E')$ with boundary $ \partial$, which we will denote by
\[ \Psic ((G, \partial), C) = (G', \partial). \]
Informally, the operation is simply `gluing' all the vertices in $C$ into a single vertex, and removing all the self-loops thus created. Below we give a formal description of this operation.

To obtain $G'$ from $G$, we remove all vertices in $C$ and add a new vertex $v_C$ (which we can think of as the ``contracted vertex" formed by identifying all vertices in $C$ with $v_C$) while keeping the rest of the vertices unchanged (including the boundary $\partial$). In $G'$, we retain all the oriented edges of $G$ both of whose endpoints are outside $C$. We then remove all the oriented edges with both endpoints in $C$. Finally, for every oriented edge in $G$ with exactly one endpoint in $C$, we replace it with the following oriented edge in $G'$.  For every edge  in $G$ with head (resp.\ tail) $u \not \in C $ and tail (resp.\ head) in $C$,  replace it by an edge $\vec e$ in $G'$  with $\vec e_+ = u$ (resp.\ $\vec e_- = u$) and $\vec e_- = v_C$ (resp.\ $\vec e_+ = v_C$).

We remark that the removal of all the edges with both endpoints in $C$ ensures that there is no self-loop in $G'$.
Let us also remark that there is a natural bijection between $\vec E'$  and the edges in $\vec E$ with at most one endpoint in $C$, which yields an embedding of $\vec E'$ inside $\vec E$.   Hence, we can (and will) think of the elements $\vec E'$ as elements of $\vec E$ as well. However, the tail and head of an oriented edge in $\vec E'$  may be different from those of its image in $\vec E$. Also, given any collection of edges $\vec F \subseteq \vec E$, each edge from $\vec F$, both of whose endpoints do not belong to the vertices of $C$, is uniquely mapped to an edge in $\vec E'$. We will denote the resulting collection of edges in $\vec E'$  obtained from this mapping by $\vec F \cap \vec E'$. 

\begin{figure}[h]
    \centering
    \includegraphics[scale = 0.7]{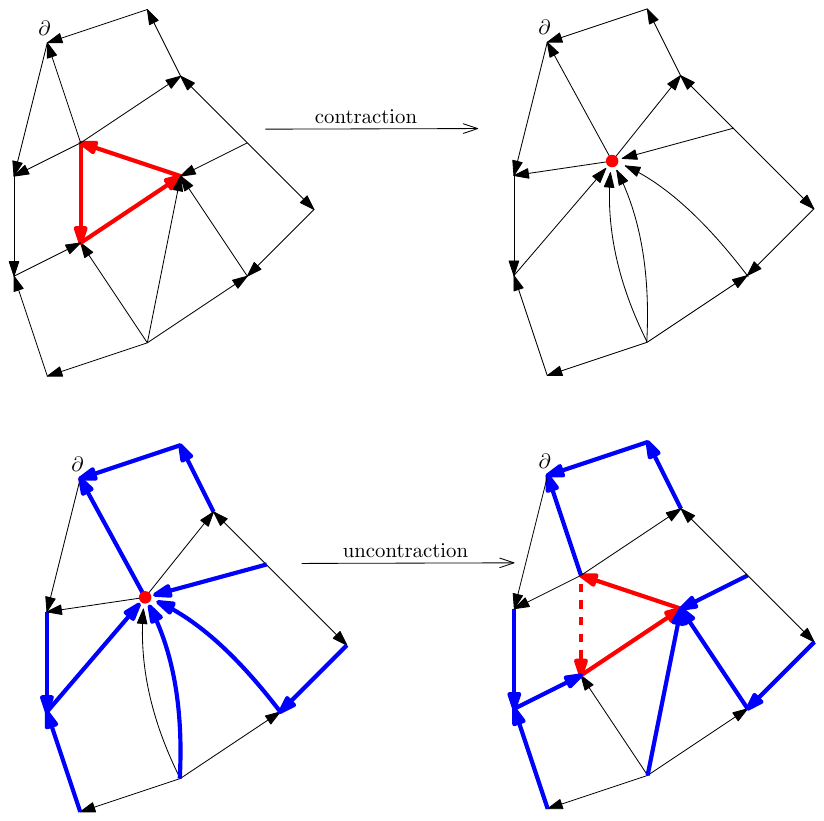}
    \caption{The contraction and uncontraction procedure. Top: An oriented multi-graph $G$ with the cycle $C$ is marked in red which gets contracted.
    Bottom: $(G', \partial) = \Psic ((G, \partial), C)$ with a spanning arborescence $\vec T'$ in blue. Right: The graph $G$ with spanning arborescence $\vec T:=\Psiuc((G, \partial), (G', \partial), C,\vec T')$ which is the union of the blue and the red oriented edges. The edge in $C$ which is deleted is dotted.}
    \label{fig:uncontraction}
\end{figure}

\item {\textbf{ Uncontraction.}} Suppose we have contracted a cycle $C$ in $(G, \partial)$ to obtain $(G', \partial)$, i.e., 
$\Psic ((G, \partial), C) = (G', \partial)$.   Let $\vec T'$ be a spanning arborescence of $(G', \partial)$. Given this data, the uncontraction operation yields a spanning arborescence $\vec T$ in $(G, \partial)$ by  {\bf uncontracting  the cycle $C$}, which we will denote by 
\[ \Psiuc ((G, \partial), (G', \partial), C, \vec T') = \vec T. \]

Let $\vec T^\star \subseteq \vec E$ be the union of the edges in $C$ and the image of the edges of $\vec T'$ in $G$. Note that $\vec T^\star$   almost forms a spanning arborescence in $(G, \partial)$ except for one blemish. There is a unique vertex of $C$ which has exactly two outgoing edges in $\vec T^\star$, one in $C$, say $\vec e_C$, and the other in the image of the edges in $\vec T'$, say $\vec e_{T'}$. Define $\vec T =\vec T^\star \setminus \{\vec e_C\}$. It is easy to see that $\vec T$ is a spanning arborescence of $(G, \partial)$.

\end{itemize}
\subsection{The sequential CLEB algorithm}
For any countable set $N$, we say a collection of weights $(w_x)_{x \in N}$ is \textbf{generic} if 
\begin{equation}
\sum_{x \in S}  n_{x} w_{x} \neq 0, \text{ for all finite $S \subseteq N$,} \label{eq:linear_comb}
\end{equation}
and any choice of coefficients $n_{x} \in \Z$, $x \in S$ such that $n_x \neq 0$ for some $x \in S$.
If the weight collection is i.i.d.\  with a continuous distribution, then clearly it is a.s.\ generic. 
It is also clear that a generic collection of weights on $\vec E$ produces a unique MSA for $(G, \partial)$.

For a vertex $v$ in a graph $G = (V, \vec E)$, let $\cO(v)$ denote the set of outgoing edges from $v$ in $G$, i.e., $$ \cO(v) = \{\vec e \in \vec E: \vec e_- = v\}.$$
Given a collection of generic weights $w:=(w_{\vec e})_{\vec e \in \vec E}$,
let $e_{\min}(v,w)$ be the oriented edge with minimum weight $w$-weight among $\cO(v)$. If the weights $w$ are clear from context, we sometimes drop it from the notation and write it as $e_{\min}(v)$.

 Take a finite, connected fully oriented graph $G = (V, \vec E)$ and a boundary vertex $\partial$ and a generic collection of weights $w$. At any point of time in the algorithm, there is a set of `exposed' edges $S_j$, an oriented multi-graph $(V_j, \vec E_j)$ (which is formed by possibly contracting cycles in $G$), and a collection of generic weights $w_j:=(w_{j,\vec e})_{\vec e \in \vec E_j}$. The vertex $\partial $ is in $V_j$ for every $j \ge 0$.  Informally, the sequential CLEB algorithm can be described by the following steps.
\begin{itemize}
    \item Pick a vertex $v \neq \partial$ in $V_j$  and  such that $e_{\min}(v, w_j)$ is not exposed and suppose its weight is $\pi_v>0$.  Expose this edge and subtract $\pi_v$ from the $w_j$-weights of all the outgoing edges from $v$. 
    \item If a cycle is created, contract it.

\item If there is no  vertex for which we can execute the first item, stop the process. Uncontract the edges in reverse chronological order. 
\item The subset of edges obtained in this way is the minimum spanning arborescence of $(G, \partial)$ for weights $w$.
\end{itemize}
Below we provide a formal description of the above process.
\paragraph{The sequential CLEB algorithm.}  {\em Forward direction.} Let us inductively define a sequence $$H_j := (V_j, \vec E_j, S_j, (w_{j,\vec e})_{\vec e \in \vec E_j})_{j \ge 0}$$ 
where $G_j:= (V_j, \vec E_j)$ is an { oriented} multi-graph with weights $(w_{j,\vec e})_{\vec e \in \vec E_j}$ and $S_j$ is a subset of $ \vec E$.  We shall call $S_j$ the set of \textbf{exposed edges} by the algorithm at step $j$. A vertex $v$ is an \textbf{exposed vertex} at step $j$ if at least one of the outgoing edges from $v$ is in $S_j$. For $j = 0$, we take $G_0 = G$, $S_0 = \emptyset$ and $w_{0,\vec e}$ = $w_{\vec e}$ for $\vec e \in \vec E_0 = \vec E$. 

The process will be such that $G_{j}$ either is the same as $G_{j-1}$ or is obtained by contracting a single cycle of $G_{j-1}$. Therefore, we can view $\vec E_j \subseteq \vec E_{j-1} \subseteq \vec E$ and $S_j \cap \vec E_j$ as a collection of edges in $\vec E_j$.  
At every step $j$, $S_j \cap \vec E_j$ is an arborescence of $G_j$ with boundary given by exactly those vertices whose tail is not in $S_j \cap \vec E_j$ (recall \Cref{def:general_arborescence}). This is clearly true for $j=0$ where the arborescence is the empty set with boundary $V$.   To carry out the inductive definition, we assume that we have generated $H_0, H_1, \ldots, H_{j-1}$ for $j \ge 1$. 

\paragraph{Contraction phase.}
\begin{enumerate}[(a)]
\item {\textsf {(Choosing a vertex.)}} In step $j$, pick a {vertex $v \ne \partial$} in $V_{j-1}$ which has no exposed outgoing edge at step $j-1$. The choice of this vertex may be a deterministic function of $(H_{k})_{k \le j-1}$, the weights $ w$ and possibly some additional independent source of randomness. If there is no such vertex $v$, stop the  contraction phase of the algorithm and go to the uncontraction phase.

\item {\textsf {(Reveal $\vec e_{\min}(v)$.)}}  Let $\pi_v$ be the minimum weight among the outgoing edges of $v$. Let $$w'_{j-1,\vec f} = w_{j-1,\vec f} - \pi_v, \text{ for all } \vec f \in \cO(v)$$ and $w'_{j-1,\vec f} = w_{j-1,\vec f}$, for all other $\vec f \in \vec E_{j-1}$. Define $S_j =  S_{j-1} \cup \{\vec e_{\min}(v)\}$. 

\item {\textsf {(If no cycle is created, move on)}} If $S_j  \cap \vec E_{j-1}$ has no cycle (that is, a cycle is not created by exposing $\vec e_{\min}(v)$), declare $V_{j} = V_{j-1}$, $\vec E_{j} = \vec  E_{j-1} $ and $w_{j,\vec e} = w'_{j-1,\vec e}$, for all $\vec e \in \vec E_j$.  

\item \textsf {(If a cycle is created, contract)} If $S_j  \cap \vec E_{j-1}$ has a cycle {$C=C_{j-1}$} (that is, a cycle is created by exposing $\vec e_{\min}(v)$), contract the cycle $C$ in $G_{j-1}$  to obtain $G_j = (V_j, \vec E_j)$, that is, $$(G_{j}, \partial) := \Psic((G_{j-1}, \partial), C ).$$
Then define the new weights as $w_{j,\vec e} = w'_{j-1,\vec e}$ for all $\vec e \in \vec E_j$. 
\end{enumerate}
Recall  the definition of an arborescence from \Cref{def:general_arborescence}, in particular recall that we allow multiple vertices to be the boundary of an arborescence. Given an arborescence $\vec t$ and a vertex $v$ with one of the outgoing edges from $v$ in ${\sf t}$, there is a unique oriented path in $\vec t$ from $v$ to one of the boundary vertices. Call this path the `future' of $v$ (see \Cref{fig:CLEB}). We say $u $ and $v$ are in the same component of the arborescence if their futures `merge' (i.e. the futures end at a common boundary vertex). It is easy to see that this defines an equivalence relation, and hence the terminology `component' is justified.
\begin{lemma}\label{lem: arborescence_in_every_step}
    $S_j \cap \vec E_j$ is an arborescence for every $j \ge 0$.
\end{lemma}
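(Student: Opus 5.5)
We will argue by induction on $j$. The invariant to carry is slightly stronger than the statement: in $G_j$, every vertex has at most one outgoing edge in $S_j \cap \vec E_j$, and $S_j\cap\vec E_j$ contains no cycle. Note that a cycle here means an oriented cycle. In a subgraph where every vertex has out-degree at most one, any cycle of the underlying unoriented graph is automatically an oriented cycle. So ruling out oriented cycles is enough to match \Cref{def:general_arborescence}. The base case $j=0$ is trivial, since $S_0=\emptyset$.

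For the inductive step, assume the invariant holds at step $j-1$, and let $v$ be the vertex chosen in step (a). By the choice in (a), $v$ has no outgoing edge in $S_{j-1}\cap\vec E_{j-1}$. Adding $\vec e_{\min}(v)$ therefore keeps the out-degree of every vertex of $G_{j-1}$ at most one. Every vertex other than $v$ has out-degree at most one, and the new edge can only close an oriented cycle through $v$. Since $v$ has a unique outgoing edge, at most one such cycle $C$ can be formed. It consists of $\vec e_{\min}(v)$ followed by the future of its head, which returns to $v$.

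In case (c) no cycle is created, so $G_j=G_{j-1}$ and the invariant is immediate. In case (d) we contract $C$ to $v_C$. We must check that the image of $S_j$ in $\vec E_j$ has out-degree at most one. All edges of $C$ have both endpoints in $C$, so they disappear. Every vertex $u\notin C$ keeps its outgoing edge, possibly with its head redirected to $v_C$. The vertex $v_C$ receives the outgoing edges of $S_j$ whose tail lies in $C$ and whose head lies outside $C$. Each vertex of $C$ already uses its unique outgoing edge inside $C$, so there are no such edges, and $v_C$ has out-degree $0$.

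Acyclicity after contraction is the step needing care. Suppose $G_j$ had an oriented cycle $C'$ in the image of $S_j$. If $C'$ avoids $v_C$, then it is already a cycle in $S_j\cap\vec E_{j-1}$ disjoint from $C$. That is impossible, because the only cycle in $S_j\cap\vec E_{j-1}$ is $C$. If $C'$ passes through $v_C$, then $v_C$ would need an outgoing edge, which it does not have. This gives acyclicity, and the claim follows, with boundary equal to the vertices of zero out-degree, including $v_C$ (declared unexplored).
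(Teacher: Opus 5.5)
Your proposal is correct and follows the same induction as the paper, with the same two cases: no cycle is created, which joins two components, or the unique cycle through $v$ is contracted. You supply the details the paper leaves implicit: the out-degree bookkeeping, uniqueness of the created cycle, the fact that $v_C$ has out-degree zero after contraction, and the observation that out-degree at most one makes every unoriented cycle an oriented one.
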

\begin{proof}
The proof is straightforward using induction.
    Either a cycle is not created in which case we are joining two components of an arborescence. Otherwise a cycle is contracted and we are still left with an arborescence.
\end{proof}
 \paragraph{Uncontraction phase.} Since we started with a fully oriented graph, once the algorithm stops, say at step $\vartheta$, the arborescence  $\vec T_{\vartheta}:=S_\vartheta \cap \vec E_\vartheta$ contains an outgoing edge of each vertex in $V_{\vartheta}$ except $\partial$. Hence, $\vec T_{\vartheta}$ is a spanning arborescence of $(G_\vartheta, \partial)$.  During the algorithm, we also obtain an ordered sequence of cycles $ C_1, \ldots, C_{k}$ which are contracted  (step (d)) at time steps $1 \le i_1 < i_2  < \cdots < i_k \le \vartheta$, i.e., $\Psic((G_{i_{\ell}-1}, \partial), C_{\ell} )= (G_{i_\ell}, \partial)$. We iteratively uncontract the cycles in reverse order to obtain the spanning arborescences $\vec T_\ell$ of $(G_{i_\ell}, \partial)$ for $1 \le \ell \le  k$. More precisely, we set  $\vec T_{k} = \vec T_{\vartheta}$ and $$ \vec T_{\ell -1} =  \Psiuc ((G_{i_{\ell}-1}, \partial), (G_{i_\ell}, \partial), C_{\ell}, \vec T_{\ell})$$ for all $1 \le \ell \le k$. 
  
Finally, we output $\vec T^* = \vec T_0$. The next theorem shows that $T^*$ is the MSA. We reproduce the proof sketch here for the sake of completeness as this is quite short.
\begin{thm}[\cite{tarjan,RS24}]\label{thm:CLEB}
    The output $\vec T^*$ of the sequential CLEB algorithm described above produces a minimum spanning arborescence of $(G, \partial)$ for weights $w$.
\end{thm}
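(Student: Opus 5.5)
We argue by induction on the number of steps of the contraction phase, using two elementary invariances of the MSA. First, a \emph{gauge transform}: if we subtract a constant $\pi_v$ from the weights of all outgoing edges of a fixed vertex $v\neq\partial$, then every spanning arborescence of $(G,\partial)$ has exactly one outgoing edge at $v$. So every arborescence's weight drops by exactly $\pi_v$, and the MSA is unchanged. Hence in step (b) the MSA of $(G_{j-1},\partial)$ is the same for $w_{j-1}$ and $w'_{j-1}$. After this subtraction all weights are still nonnegative, provided they were nonnegative before; this holds inductively, since at step (b) we subtract the minimum outgoing weight. Moreover, every exposed edge has $w'$-weight zero, and it keeps weight zero in later steps. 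Indeed, a vertex with an exposed outgoing edge is never picked again unless it has been absorbed into a contracted vertex, and in that case the exposed edges inside the cycle disappear.

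The key step is the contraction case. Suppose $C$ is a cycle consisting of exposed edges, all of weight zero under the current weights $w$, with nonnegative weights elsewhere. Let $(G',\partial)=\Psic((G,\partial),C)$, with the inherited weights, and let $\vec T'$ be an MSA of $(G',\partial)$. We claim that $\vec T=\Psiuc((G,\partial),(G',\partial),C,\vec T')$ is an MSA of $(G,\partial)$.

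To see this, first note that $w(\vec T)=w(\vec T')$, because the added cycle edges have weight zero. Conversely, take any spanning arborescence $\vec A$ of $(G,\partial)$. Since $\vec A$ reaches $\partial\notin C$, some edge of $\vec A$ leaves $C$. Walk backwards along $C$ from the tail of such an edge: at each vertex of $C$ whose outgoing edge in $\vec A$ is not its cycle edge, replace that outgoing edge by the cycle edge. We must choose the replacements so that exactly one vertex of $C$ keeps an edge exiting $C$ and no cycle is formed. A cleaner route is to modify $\vec A$ as follows. Keep the edges of $\vec A$ with at most one endpoint in $C$, except that we keep only one edge leaving $C$ (say the first one met along a path from $C$ toward $\partial$). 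Then add the cycle edges in place of the removed edges out of $C$. Deleting edges and inserting weight-zero edges never increases the weight, since weights are nonnegative.

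Next we contract. The edges of the modified $\vec A$ not lying inside $C$ map to an arborescence $\vec A'$ of $(G',\partial)$ with $w(\vec A')\le w(\vec A)$. One must check that every vertex of $G'$ other than $\partial$ has exactly one outgoing edge and that there are no cycles. I expect this combinatorial check to be the main obstacle: it requires care to show that keeping a single exit edge from $C$ leaves all vertices outside $C$ still connected to $\partial$. The reason this works is that every path in $\vec A$ towards $\partial$ entering $C$ can be rerouted along $C$ to the chosen exit.

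Granting this, we get $w(\vec A)\ge w(\vec A')\ge w(\vec T')=w(\vec T)$, which proves the claim. Genericity of the weights is preserved under gauge transforms and under restriction to $\vec E'$, so the minimizers remain unique throughout.

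Finally, we chain the steps. At termination $\vartheta$, every vertex of $G_\vartheta$ other than $\partial$ has its exposed edge in $\vec T_\vartheta$, and all of these edges have weight zero. Since all weights are nonnegative, $\vec T_\vartheta$ is an MSA of $(G_\vartheta,\partial)$ under $w_\vartheta$. Now apply the contraction claim backwards through $C_k,\dots,C_1$, interleaving the gauge invariance for the non-contracting steps (c), which change only the weights. This shows that each $\vec T_\ell$ is an MSA of $(G_{i_\ell},\partial)$, and hence that $\vec T^*=\vec T_0$ is an MSA of $(G,\partial)$ for the weights $w_0=w$.
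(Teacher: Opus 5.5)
Your overall route is the paper's: gauge invariance, the weight-zero base case at $\vartheta$, and backward induction through the uncontractions, where a competitor arborescence on $G$ is reduced to one on the contracted graph by keeping a single exit edge from $C$. Comparing against an arbitrary $\vec A$ rather than against the MSA $T^*_{i-1}$ is a harmless variation, and it even spares you the uniqueness step.

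\textbf{The gap.} The step you explicitly defer is where the real content lies, and the choice you make there is wrong. If you keep ``the first'' edge leaving $C$ along the $\vec A$-path from some vertex of $C$, that path may later re-enter $C$. Then $\vec A'$ contains a directed cycle through $v_C$. For instance, take $C=\{a,b\}$ and $\vec A=\{a\to x,\ x\to b,\ b\to\partial\}$. Keeping the first exit $a\to x$ on the path from $a$ gives $\vec A'=\{v_C\to x,\ x\to v_C\}$. This is not a spanning arborescence of $(G',\partial)$, so the inequality $w(\vec A')\ge w(\vec T')$ cannot be invoked. Your remark that paths entering $C$ ``can be rerouted along $C$ to the chosen exit'' is true but insufficient. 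The continuation of the chosen exit must itself never return to $C$.

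\textbf{The missing idea.} Choose the exit vertex $v\in C$ so that the $\vec A$-path from $v$ to $\partial$ contains no other vertex of $C$, for example the last vertex of $C$ on any $\vec A$-path to $\partial$. This is exactly the choice the paper makes for $T^*_{i-1}$. With it, the check you were worried about is short:
\begin{itemize}
\item In $\vec A'$, every vertex of $G'$ other than $\partial$ has exactly one outgoing edge.
\item Following $\vec A'$ from a vertex outside $C$ reproduces its $\vec A$-path until that path reaches $\partial$ or first enters $C$, at which point it reaches $v_C$.
\item From $v_C$, the kept edge leads along a $C$-avoiding $\vec A$-path to $\partial$.
\end{itemize}
So every forward path ends at $\partial$. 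Hence $\vec A'$ is connected with $|V'|-1$ edges, so it is acyclic even ignoring orientations, i.e.\ a spanning arborescence.

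\textbf{A minor side remark.} Genericity in the paper's sense is not literally preserved by the gauge transforms, since the exposed edge acquires weight exactly $0$. This does not matter: your argument never needs uniqueness, because you prove $w(\vec A)\ge w(\vec T)$ against every competitor $\vec A$.
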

\begin{proof}[Proof sketch]
    We prove by backward induction on the cycles uncontracted and prove that in every step $i$, the arborescence produced by uncontraction is the MSA for the weights $w_i$.
    
    Observe that subtracting the same weight from all the outgoing edges of a vertex keeps the MSA unchanged (it is a gauge transform). The final arborescence produced at time $\vartheta$ must be the MSA of $G_\vartheta$ as its weight is 0 and the weights are non-negative in every step by design.

    Suppose we uncontract a cycle $C$ which was contracted in step $i-1$. Now take the $w_{i-1}$-weight MSA of $G_{i-1}$ and call it $T_{i-1}^*$. Our goal is to prove $T_{i-1}^*=T_{i-1}$. Observe  that in $G_{i-1}$, there is at least one vertex $v$ in $C$ with an oriented path in $T^*_{i-1}$ starting from $v$, ending at $\partial$ and containing no other vertex from $C$. Take the outgoing edge from one such $v$ in $T^*_{i-1}$, keep it and remove every other outgoing edge from $v_C$ which came from $T^*_{i-1}$. Also remove all edges in $C$ which belong to $T_{i-1}^*$. The remaining collection of oriented edges in $T_{i-1}^*$ is a spanning arborescence of $(G_i, \partial)$, call it $T'$.
Let $w_i(t)$ denote the $w_i$-weight of $t$. We have the following string of inequalities
    \begin{equation*}
        w_{i-1}(T_i) = w_{i-1}(T_{i-1}) \ge w_{i-1}(T^*_{i-1}) \ge w_{i-1}(T') \ge w_{i-1}(T_i).
    \end{equation*}
The first equality follows since the $w_{i-1}$-weight of all the oriented edges in $C$ is 0, and they are the only edges in $T_{i-1}\setminus T_i$. The first inequality is justified because $T_{i-1}^*$ is the MSA, the second inequality follows because $T'$ is obtained from $T_{i-1}^*$ by removing some edges, and the final inequality follows because gauge transform does not change the MSA. Therefore all the inequalities are actually equalities and $T_{i-1} = T_{i-1}^*$ because the MSA is unique for generic weights.
\end{proof}

\begin{remark}\label{rmk:edge_ambiguity}
Observe that since the vertex set can potentially change in each step of the CLEB process, the head or tail of an oriented edge $\vec e$ in $\vec E_j$ can have ambiguous meaning if $\vec e \in \vec E_i$ for some $i <j$ as well. Therefore, whenever we speak of the head or tail of an oriented edge, we need to specify the underlying graph. We define the head (resp. tail) of $\vec e$ in $G_i$ to be the head (resp. tail) of $\vec e$ in $V_i$ when we think of $\vec e \in \vec E_i$.  
\end{remark}
\begin{remark}
Let $Z_1 \subset V_i$ and $Z_2 \subset V_j$.
Let $\vec e$ be the edge exposed by the CLEB process in step $k$.
    We say the CLEB process in step $k\ge \max\{i,j\}$ \textbf{hits} $Z_2$ from $Z_1$ if the tail of $\vec e$ in $V_i$ is in $Z_1$ and the head of $\vec e$ in $V_j$ is in $Z_2$. 
\end{remark}
\subsection{The CLEB walk algorithm}\label{sec:CLEB_walk_new}
We recall that in the sequential CLEB algorithm,  the choice of the vertex in step (b) can depend on the weights, and may involve additional randomness as well. This property will be exploited heavily. For future reference, we now define a useful choice of ordering for the sequential CLEB which we call the \textbf{CLEB walk algorithm} (for the experts, it is  reminiscent of the celebrated  Wilson's algorithm.) This idea was first proposed by Gabow, Galil, Spencer and Tarjan \cite{gabow86}.\ As Wilson's algorithm performs successive loop erased random walks, \textbf{CLEB walk algorithm}  is executed by performing successive \textbf{CLEB walks}, and instead of erasing loops, the CLEB walk contracts a loop.

Suppose we have a finite oriented multi-graph $(G, \partial)$ with boundary $\partial$ and with generic weights. Suppose we are in an intermediate step of the CLEB walk algorithm, that is, we have the collection $(H_i)_{0 \le i \le j-1}$. At this point, we can decide to pick a vertex $x \neq \partial $ in $V_{j-1}$ which has no outgoing exposed edge at step $j-1$. The \textbf{CLEB walk started at  $x$} is generated by running the sequential CLEB algorithm on $(G_{j-1}, \partial)$ with weights $(w_{j-1,\vec e})_{\vec e \in \vec E_{j-1}}$  for a particular choice of vertices until a certain stopping time $\tau$ which we describe below.

For time step $j$, we choose the vertex $v$ (in step (a)) to be $x$. Then inductively in step $k \ge j$, we choose the vertex as follows: 
\begin{itemize}
\item [a.] If a cycle $C$ is contracted in $G_{k-1}$ to produce $G_k$, then choose the vertex $v_C$ which is the new vertex in $G_k$ obtained by contracting the vertices of $C$. Expose the minimum weight outgoing edge from $v_C$, and if a cycle is created by this exposed edge, contract it. 
\item [b.] Otherwise, if the head of the exposed edge in step $k$ is $\partial$, declare $\tau =k$ and stop the walk. 
\item [c.] Otherwise, if the head of the exposed edge in step $k$ is a vertex which already has an exposed outgoing edge in $S_{k-1}$, stop the walk as well. We emphasize here that we employ this step only if step a. is not applicable.
\item [d.] Otherwise, choose the head of the exposed edge in step $k$. Expose the minimum weight outgoing edge from this vertex, and contract a cycle if created.
\end{itemize}
For any $t <\tau-j+1$, a CLEB walk started at $x$ and run for $t$ steps is exactly the above algorithm run up to step $k=j-1+t<\tau$.

The CLEB walk described above is a slight generalization of  the one considered in \cite{RS24}. There, the only case considered was the CLEB walk with initial condition $H_0 $ (i.e., nothing is exposed). For our case, we need to consider a situation where we have a collection of exposed edges which is not necessarily connected to the boundary. Nevertheless, it turns out that item c. never arises in the special case of $(H_i)_{0 \le i \le j-1}$ that we need to consider. We still keep item c. for the sake of keeping the process well defined.

\subsection{Summary of notations}
\begin{itemize}
   \item $S_j:=$ the set of exposed edges up to step $j$ in sequential CLEB algorithm.
\item $W_{j,\vec e}:=$ the weight of an edge $\vec e$ after step $j$ in the sequential CLEB algorithm.
\item $\cO(v) := \{\vec e:\vec e_- = v\}$= the set of outgoing edges from $v$.
\item $\cO(v, t):=$ the set of outgoing edges from $v$ in $\vec E_t$. 
\item $\cO(v, t, S_t^c):=$ the set of oriented edges in $\cO(v,t) $ which is not in $S_t$ (the set of unexposed outgoing edges from $v$ in $\vec E_t$).
\item $\vec e_{\min}(v,t):=$ the oriented edge with minimum weight among those in $\cO(v, t, S_t^c)$. 
\item  $\vec e_{\min}(v)$ is the oriented edge with minimum $W$-weight among $\cO(v)$. That is, $$\vec e_{\min}(v) = \{\vec e \in \cO(v): W(\vec e) < W(\vec f) \text{ for all }\vec f \neq \vec e, \vec f \in \cO(v)\}.$$
We call $\vec e_{\min}(v)$ the \textbf{minimum weight outgoing edge from $v$.}
\item We say the CLEB process \textbf{hits} $S_2$ from $S_1$ if the tail of the exposed edge $\vec e$ is in $S_1$ and the head of $\vec e$ is in $S_2$. 
\item $\cN_j$:= roughly, it denotes the number of vertices exposed by the CLEB process up to step $j$. More formally, it is the head of the edges in $S_j$ union the first vertex selected in the process (the tail of $S_1$). 
\item $N_j :=|\cN_j|$. 
\item  $\tilde \cN_j:=$ the set of heads of  $ S_j \cap \vec E_j$, that is, $\tilde N_j$ is the set of heads of the exposed edges in the contracted graph after step $j$.
\item We say the CLEB process in step $k\ge \max\{i,j\}$ \textbf{hits} $Z_2$ from $Z_1$ if the tail of $\vec e$ in $V_i$ is in $Z_1$ and the head of $\vec e$ in $V_j$ is in $Z_2$.
\item $\cont(v):=$ the set of vertices contracted to create $v$.
\item $\cont(A)=\cup_{v\in A}\cont(v)$.
\item $\Tip_i$: $\cont(X_i)$ or the set of vertices contracted to create the tip of the path $P_i $ in the local CLEB process.
\item $\tip_j$:= $|\Tip_j|$.
\item $\tau_1,\tau_2,\dots$ are times when `past' is hit, i.e., times in $\hitpast$. 
\item $\alpha_1,\alpha_2,\dots$ are times when ${\sf f}$ is hit, i.e., times in $\hittree$.
\item $\cB_k:=$ the set of blue vertices up to step $k$.
\item $B_k:= |\cB_k|$.
\item $S_{\le k}:=(S_i)_{0\le i\le k}$.
\item $\mathfrak B:=$ bad event in stage 2.
\item $\mathfrak D:=$ bad event in stage 3.
\item $A_n=\log\log n$ and $\eps_n=A_n^{-1}$.
\item We use the notation $a_n = O(b_n)$ to mean that there exists a constant $C>0 $ such that  $a_n/b_n \le C$ for all $n \ge 1$. We also use the notation $a_n/b_n = o(1)$ to mean that $a_n/b_n \to 0$ as $n \to \infty$.
\end{itemize}

\section{Analysis of the CLEB process}\label{sec:proof_main}
From here on, we work with the fully oriented complete graph $\vec K_n = ({\sf V}_n, \vec {\sf E}_n)$. Sometimes we drop the suffix and call them ${\sf V},\vec {\sf E}$ for notational clarity. Throughout, fix a vertex labeled $1 (\neq \partial)\in {\sf V}$. Recall that we assign i.i.d. Exponential(1) weight $W(\vec e)$ to each oriented edge $\vec e\in\vec {\sf E}$. 
We employ the sequential CLEB algorithm, and 
the key is to find the correct sequence of vertices to expose to aid in the proof of \Cref{thm:main}. We perform the contraction phase of the algorithm in three stages. We refer to \Cref{sec:outline} for motivations behind the following choice of running the sequential CLEB algorithm. Let's call this special choice of sequential CLEB \textbf{local CLEB process}, which we describe below.

Let $\cN_j$ be the set of vertices exposed by the local CLEB process up to step $j$ (assuming $\cN_0 = \{1\}$). More formally, it is the head of the edges in $S_j$ union the first vertex selected in the process (the tail of $S_1$). We define $N_j=|\cN_j|$. If $x\in \R$, we define $\cN_x = \cN_{\lfloor x \rfloor}$ and $N_x = |\cN_x|$.

\begin{description}
    \item[Stage 1: Expose the incoming arborescence.]
Fix $R \ge 1$.
We run the CLEB walk started from $1$ for $R+1$ steps. If at $t$-th step $(1<t \le R+1)$ the exposed edge hits $\cN_{t-1}$ (that is, the exposed edge hits a vertex already exposed in the past), we \textbf{exit} the process.

Suppose we have not exited the process. Then $\cN_{R+1}$ is a simple oriented path of length $R+1$. Now we want to expose the `minimum weight incoming tree' into the vertices of $\cN_R$ which we describe now (we choose $\cN_R$ and not $\cN_{R+1}$ for a small technical reason which will be clear later).
Informally, for every vertex $v$ in $\cN_R$, we expose all the minimum weight outgoing edges whose head is $v$. Then for every $\vec e$ added in the previous step, we expose all the minimum weight outgoing edges whose head is $\vec e_-$. We iterate this process until we can (see \Cref{badevents}).

More formally, define ${\sf F}^{(0)}_{n}=\mathcal \cN_{R}$ and for some $i \ge 0$, if ${\sf F}^{(i)}_{n} \neq \emptyset$, then
$$
{\sf F}^{(i+1)}_{n}=\{v\in {\sf V}_{n}\setminus\displaystyle\bigcup_{0\leq j\leq i} {\sf F}^{(j)}_{n} : (\vec{e}_{\min}(v))_{+}\in {\sf F}^{(i)}_{n}\}.
$$
If ${\sf F}^{(i)}_{n}=\emptyset$, define ${\sf F}^{(j)}_{n}=\emptyset$, for all $j>i$. Let 
$${\sf F}_{n}=\displaystyle\bigcup_{i\geq 0} {{\sf F}}^{(i)}_{n}.$$ 
Now the description of the choices of vertices in the sequential CLEB algorithm for this stage is straightforward:
choose the vertices of ${\sf F}_n \setminus {\sf F}_n^{(0)}$ in any order. Observe that by choice, no cycle is created for any such step. This completes stage 1 of the contraction phase. Let $\tauendone$ be the time when stage 1 is complete.
Define $\Upsilon_n = S_{\tauendone}$ (the set of exposed edges at the end of stage 1).
We observe that $\Upsilon_n$ has a very simple description. Namely, $\Upsilon_n$
is $\cN_{R+1}$ along with a union of $(R+1)$-disjoint arborescences, each with its boundary being a single vertex in $\cN_R$. Name the arborescence with boundary $v \in \cN_R$, $\Upsilon_{n,v}$. Call $\Upsilon_n$ the \textbf{incoming arborescence}.

\begin{figure}[h]
    \centering
    \includegraphics[width=0.7\linewidth]{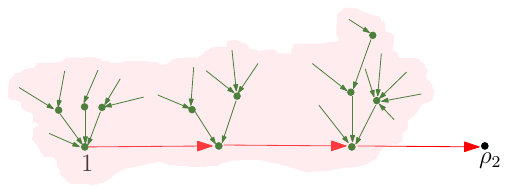}
    \caption{An illustration of stage 1 of the local CLEB process. Suppose $R=2$ and  $\cN_3$ is given by the red oriented edges. Then incoming arborescence $\Upsilon_n$ is given by the green and red oriented edges. The vertices in the pink region are in ${\sf F}_n$.}
    \label{badevents}
\end{figure}

We move to stage 2 only if we have not exited the process. 
    
    \item [Stage 2: CLEB walk from tip of $\cN_R$.] 
Let $\rho_R$  be the  head of the last edge exposed in $\cN_{R+1}$. Perform CLEB walk as described in \Cref{sec:CLEB_walk_new} started at $\rho_R$. Let $\tauend$ be the stopping time when stage $2$ is complete (i.e. the CLEB walk started from $\rho_R$ hits $\partial$).
    \item [Stage 3: CLEB walk from rest.]
    
    Let $\tilde \cN_j$ be the union of the set of heads and tails of  $ S_j \cap \vec E_j$. That is, $\tilde \cN_j$ is the set of heads and tails of the exposed edges of the contracted graph at step $j$.
    Order the vertices in ${\sf V}_n \setminus \{\partial\}$ that are not exposed by the end of  stage 2 in any arbitrary order. Perform CLEB walk as described in \Cref{sec:CLEB_walk_new} from the first vertex until $\tilde {\cN}_{{\tauend}}$ is hit.
    Suppose this happens at time $\zeta_1$. Then pick the next vertex in the ordering which is not exposed by time $\zeta_1$ and is not $\partial$ and perform the CLEB walk algorithm from that vertex until $\tilde \cN_{\zeta_1}$ is hit. Continue until there is no vertex left to be exposed except $\partial$.
\end{description}
The uncontraction process is now performed as usual to output the MSA.

\bigskip
We point out that stages 2 and 3 are reminiscent of the celebrated Wilson's algorithm \cite{wilson_algo} which is used to sample the uniform spanning tree, except instead of erasing loops we contract it.
Notice that stage 1 of the above algorithm comes with a choice of $R$ (which we will choose later to be large but fixed).
For future use, define $W_j$ to be the collection of weights $(W_{j,\vec e})_{\vec e \in \vec E}$ after step $j$ of the local CLEB algorithm. 

\subsection{Analysis of Stage 1}
We start with a simple lemma.
\begin{lemma}\label{lem:exit}
    Probability that the local CLEB process exits in stage 1 tends to 0 as $n \to \infty$.
\end{lemma}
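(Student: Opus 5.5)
The exit event concerns only the first $R+1$ steps of the CLEB walk started from $1$, before any cycle has been contracted. I will show directly that each step hits the set of previously exposed vertices with probability $O(R/n)$, and then take a union bound over the $R$ relevant steps.

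First I describe the walk before exit. At step $1$ the walk exposes $\vec e_{\min}(1)$. Since the weights of $\cO(1)$ are i.i.d.\ Exponential$(1)$, its head is uniform on ${\sf V}_n\setminus\{1\}$.

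Suppose that up to step $t-1$ no exit has occurred and $\partial$ has not been hit. Then $\cN_{t-1}$ is a simple oriented path of $t$ vertices ending at some vertex $v$. No edge of $\cO(v)$ has yet been exposed, and no weight in $\cO(v)$ has been modified, because the gauge subtractions were applied only to outgoing edges of the earlier path vertices. The weights of $\cO(v)$ are independent of the information revealed so far, which involves only the outgoing edges of the other vertices. Hence, conditionally on $S_{\le t-1}$, the head of $\vec e_{\min}(v)$ is uniform over the $n-1$ vertices other than $v$. So the probability that it lands in $\cN_{t-1}\setminus\{v\}$, which has $t-1$ elements, is $(t-1)/(n-1)$.

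A union bound over $t=2,\dots,R+1$ gives
\[
\P(\text{exit})\le \sum_{t=2}^{R+1}\frac{t-1}{n-1}\le \frac{R^2}{n-1}\to 0,
\]
since $R$ is fixed.

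One side issue needs care. If the walk hits $\partial$ before $R+1$ steps, the CLEB walk stops and stage 1 cannot complete as described. This has probability at most $(R+1)/(n-1)$ by the same uniformity argument. I would either count it as part of the exit event or note that it is negligible.

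The only step that needs checking is the conditional uniformity claim, that is, that the memorylessness and independence hold with no cycle contracted. This is straightforward, so I expect no real obstacle.
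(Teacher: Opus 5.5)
Your proposal is correct and follows the paper's argument: the minimum-weight outgoing edge of the fresh tip has a uniform head, so each step closes a cycle with probability $O(R/n)$, and a union bound over the first $R+1$ steps gives a bound of order $R^2/n$. Your extra remark about the walk hitting $\partial$ within the first $R+1$ steps, which also has probability $O(R/n)$, addresses a point the paper leaves implicit.
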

\begin{proof}
Note that,
\begin{equation*}
\{\text{the process exits in stage $1$}\} \subseteq \cup_{i=1}^{R+1}\{\text{a cycle is created in the $i$-th step}\}.
\end{equation*}
In every step, the minimum weight outgoing edge is uniformly distributed among all the outgoing edges (since the weights are i.i.d.). Also the number of vertices exposed in $i$ steps is at most $i$. Thus a cycle is created in the $i$th step with probability at most $i/(n-1)$.
Therefore by union bound,
\begin{align*}
\P(\text{the process exits in stage $1$})&\leq\P(\cup_{i=1}^{R+1}\{\text{a cycle is created in the $i$-th step}\})&\\
&\leq\sum_{i=1}^{R+1}\frac{i}{n-1}\to 0
\end{align*}
as $n\to\infty$ and we are done.
\end{proof}
In the proof of \Cref{lem:exit}, we simply use the fact that the minimum weight outgoing edge is uniformly distributed until a cycle is created. This is true for any continuous distribution. After a cycle is created, the contracted vertex has a potentially complicated distribution of the weights of the outgoing edges because of the weight subtraction step in the CLEB algorithm (item (b) in the contraction phase). However, if the weights are i.i.d.\ Exponential$(1)$ then, conditioned on the exposed edges and their weights, the uniformity of the minimum weight outgoing edge from every vertex is still preserved (for the subtracted weights). This is a simple consequence of the memoryless property of Exponential random variables.

To analyze stage 2, we need to exploit the memoryless property even more. Fix a set of vertices $S$ and suppose $U \subseteq {\sf V} \setminus S$ be the set of vertices such that the minimum weight outgoing edge from each vertex $u\in U$ hits $S$. Then conditioned on $U$ and the weights of $(\vec e_{\min} (u))_{u \in U}$, the minimum weight outgoing edge from every vertex $z \in {\sf V} \setminus (S \cup U)$ is uniform among all the edges in $\cO(z)$ which do not hit $S$. Let $\pi_z $ be the weight of the minimum weight outgoing edge from $z$. For each $z \in {\sf V} \setminus (S\cup U)$, after subtracting $\pi_z$ from the weight of every oriented edge in $\cO(z)$, the law of the weights of the edges in $\cO(z)$ except $\vec e_{\min }(z)$ remain i.i.d.\ Exponential$(1)$. We now state and prove an elementary lemma about exponential random variables which summarizes this property.

\begin{lemma}\label{lem:exp}
    Suppose we have $n$ i.i.d.\ Exponential(1) random variables $X_1, X_2,...., X_n$. Let $S\subsetneq\{1,2,...,n\}$ and $X_J=\min\{X_1,X_2,...,X_n\}$. Then
    \begin{enumerate}[a.]
    \item The distribution of $J$ conditioned on $J \not \in  S$ is Uniform$(\{1,2,...,n\}\setminus S)$.
        \item Conditioned on $J, X_J$, the conditional joint distribution of $(X_i-X_J)_{i\neq J}$ is i.i.d. Exponential$(1)$.
    \end{enumerate}
\end{lemma}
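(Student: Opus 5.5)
The plan is to compute the joint density of $(J, X_J, (X_i - X_J)_{i \neq J})$ directly. Both parts then follow by reading off the factorized density.

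Fix an index $j \in \{1,\dots,n\}$, a value $s>0$, and nonnegative values $(y_i)_{i\neq j}$. On the event $\{J=j\}$ we have $X_i = s + y_i$ with $y_i>0$ for $i\neq j$. Since the $X_i$ are i.i.d.\ with density $e^{-x}$ on $(0,\infty)$, the change of variables $x_j = s$, $x_i = s+y_i$ has unit Jacobian. Hence the joint density of $(J, X_J, (X_i-X_J)_{i\ne J})$ at $(j,s,(y_i))$ is
\begin{equation*}
e^{-s}\prod_{i\neq j} e^{-(s+y_i)} = n e^{-ns}\cdot \frac{1}{n}\cdot\prod_{i\ne j} e^{-y_i}, \qquad s>0,\ y_i>0 .
\end{equation*}
This factorizes into three independent pieces: $J$ is uniform on $\{1,\dots,n\}$, $X_J$ is Exponential$(n)$, and $(X_i - X_J)_{i\neq J}$ are i.i.d.\ Exponential$(1)$. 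Moreover, the product of the $n-1$ exponential factors does not depend on $j$ or $s$, so this last family is independent of $(J,X_J)$.

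Part b is read off immediately. Conditioning on $J=j$ and $X_J=s$, the conditional density of $(X_i - X_J)_{i\ne j}$ is $\prod_{i\neq j}e^{-y_i}$, i.e.\ i.i.d.\ Exponential$(1)$.

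For part a, the factorization gives $\P(J=j)=1/n$ for every $j$, by symmetry or by integrating out $s$ and the $y_i$. Since $S$ is a proper subset, $\P(J\notin S) = (n-|S|)/n>0$, so the conditioning is well defined. For $j\notin S$ this yields $\P(J=j\mid J\notin S) = (1/n)/((n-|S|)/n) = 1/(n-|S|)$, which is the uniform distribution on $\{1,\dots,n\}\setminus S$.

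There is no real obstacle. The only care needed is that ties have probability zero, so $J$ is a.s.\ well defined, and that the change of variables is valid on each event $\{J=j\}$ because it is a translation.
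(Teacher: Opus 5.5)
Your proof is correct and follows the same idea as the paper's, which cites symmetry of i.i.d.\ continuous variables for part a and the memoryless property for part b; your factorization $e^{-ns}\prod_{i\neq j}e^{-y_i}$ of the joint density is the explicit form of that memorylessness argument. It also shows that the residuals $(X_i-X_J)_{i\neq J}$ are independent of $(J,X_J)$, so their law is unchanged by conditioning on $\{J\notin S\}$, which is exactly how parts a and b are used together in \Cref{cor:cond_law}.
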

\begin{proof}
Since $S$ is a fixed subset and $X_1,X_2,...,X_n$ are i.i.d\ continuously distributed random variables, $J$ is uniform over $\{1,2,...,n\}\setminus S$. This proves part a.
Part b.\ is a simple consequence of the memoryless property of Exponential random variables.
\end{proof}

Define a critical Galton--Watson arborescence by taking a critical Galton--Watson tree and then orienting every edge towards the parent. Clearly, the common ancestor of every vertex is the boundary of this arborescence. We now prove that the arborescences exposed in stage $1$ converge to i.i.d. Poisson$(1)$ Galton--Watson arborescences.

\begin{prop}\label{prop:incoming_tree}
Condition on the event that we do not exit in stage $1$.
Then $(\Upsilon_{n,v})_{v \in \cN_R}$ converges to $(R+1)$ many i.i.d.\ Poisson(1) Galton--Watson arborescences  as $n \to \infty$. For each $v \in \cN_R$, the limit of $\Upsilon_{n,v}$ is an arborescence with boundary $v$. Furthermore, $\rho_R \not \in {\sf F}_n$ with probability tending to $1$ as $n \to \infty$.
\end{prop}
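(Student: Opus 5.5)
Conditioned on not exiting in stage 1, the path $\cN_{R+1}=\{1=\rho_0,\rho_1,\dots,\rho_R\}$ is a uniformly chosen simple path of length $R+1$, and it is determined by the minimum weight outgoing edges of $\rho_0,\dots,\rho_{R-1}$ (and of $\rho_R$'s predecessor). The plan is to compare the exploration of ${\sf F}_n$ with a multitype branching exploration. By \Cref{lem:exp}(a), conditioned on the path, the heads $(\vec e_{\min}(u))_+$ for $u\notin\{\rho_0,\dots,\rho_R\}$ are independent and each uniform on ${\sf V}_n\setminus\{u\}$; the path vertices' own minimum edges are already fixed. So the number of $u$ whose minimum edge points to a given $v\in\cN_R$ is Binomial$(n-R-1,1/(n-1))$, jointly multinomial across the $R+1$ targets, hence asymptotically $R+1$ i.i.d.\ Poisson$(1)$ counts.

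I would then run a breadth-first exploration of ${\sf F}_n$, generation by generation, and fix a depth $r$ and a cap $K$. Exposing which vertices point into the current generation ${\sf F}^{(i)}_n$ only uses the yet-unrevealed heads of vertices not yet in ${\sf F}_n$. Conditionally on the history, these are still independent and uniform on ${\sf V}_n\setminus\{u\}$ restricted to the event of not hitting the already-explored generations. The number of children of a given explored vertex is therefore Binomial$(n-O(m),(1-O(m)/n)^{-1}/(n-1))$, where $m$ is the number explored so far. Also, distinct explored vertices get disjoint children sets, which is automatically a multinomial split.

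On the event that the first $r$ generations have at most $K$ vertices in total, the total variation distance between the law of the first $r$ generations of $(\Upsilon_{n,v})_{v\in\cN_R}$ and that of $R+1$ independent Poisson$(1)$ Galton--Watson trees truncated at depth $r$ is $O(K^2/n)$, by standard Binomial-to-Poisson coupling. Tightness of the Poisson GW tree lets us choose $K$ so that this event has probability close to $1$ in the limit. Orientation is toward $v$ by construction, so each limit is an arborescence with boundary $v$. Since local convergence only involves finite balls, this proves the first two claims.

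The part needing the most care is that $\rho_R\notin{\sf F}_n$. Note that ${\sf F}_n$ could be macroscopic, since a critical tree is finite but has heavy-tailed size. I would argue via the future of $\rho_R$: $\rho_R\in{\sf F}_n$ iff following minimum edges from $\rho_R$ reaches $\cN_R$ before cycling or hitting $\partial$. The successive heads from $\rho_R$ are uniform on unrevealed vertices (\Cref{lem:exp}), so this is a walk on the functional graph of uniform maps. The probability that it hits the $R+1$ vertices of $\cN_R$ before creating a cycle or hitting $\partial$ is at most the expected number of steps times $(R+1)/n$. The path length until a repeat or until $\partial$ is hit is $O(\sqrt n)$ in expectation (birthday paradox), giving a bound of $O((R+1)/\sqrt n)\to0$. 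The main obstacle is doing this conditioning cleanly given that $\rho_R$'s minimum edge is not yet revealed while the path vertices' edges are. I would reveal the future of $\rho_R$ first, before stage 1's tree exploration, so that the uniformity is transparent.
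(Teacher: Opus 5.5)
Your proof is correct. For the convergence of $(\Upsilon_{n,v})_{v\in\cN_R}$ it follows the paper's argument. The paper also runs a breadth-first exploration of the incoming arborescences. It uses \Cref{lem:exp}(a) to see that an unexplored vertex's minimum edge is uniform off the already-explored set, so it points at the currently explored vertex with probability $(n-1-k)^{-1}$. It then gets Binomial-to-Poisson convergence by induction on the number of exploration steps. Your generation-wise version, with depth $r$, cap $K$ and an explicit $O(K^2/n)$ total-variation bound, is a more quantitative rendering of the same idea.

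Where you genuinely differ is the claim $\rho_R\notin{\sf F}_n$.
\begin{itemize}
\item The paper folds it into the same exploration. Given tightness of the number of exposed vertices, each exposure step reaches $\rho_R$ with probability $O(1/n)$. It uses, implicitly, that the exploration stops after a tight number of steps because the limiting forest of $R+1$ critical Poisson Galton--Watson trees is a.s.\ finite.
\item You instead reveal the forward trajectory of $\rho_R$ under the random mapping of minimum edges. The birthday bound then gives the explicit rate $O(R/\sqrt n)$, decoupled from the tree exploration.
\end{itemize}
Your route gives a rate and needs no appeal to the limit object. The paper's route needs no extra computation. Your worry that ${\sf F}_n$ could be macroscopic is not an obstruction to the paper's route. $|{\sf F}_n|$ converges in law to an a.s.\ finite total progeny, hence is tight. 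Your rate is consistent with $\E|{\sf F}_n|$ being of order $\sqrt n$ and $\P(\rho_R\in{\sf F}_n)\approx \E|{\sf F}_n|/n$.

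One cosmetic point concerns indexing. In the paper, $\cN_{R+1}$ has $R+2$ vertices. $\cN_R$ consists of the first $R+1$ of them, all with revealed minimum edges. $\rho_R$ is the last one: it lies outside $\cN_R$ and its minimum edge is unrevealed. Your first paragraph labels $\cN_{R+1}=\{\rho_0,\dots,\rho_R\}$ and excludes $\rho_R$ from the vertices with uniform heads, which conflicts with your (correct) last paragraph. This is harmless: it changes each offspring count by at most one Bernoulli variable of parameter $O(R/n)$.
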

\begin{proof}
Order the vertices of $\cN_R$ in some order. Then we do breadth first search exploration of $\Upsilon_{n,v}$  for  $v\in \cN_R$ according to this order. In a generic step, when we explore a vertex $u$, we expose all the vertices each of whose minimum weight outgoing edge hits $u$ (which includes $\rho_R$ if it is not part of the exposed vertices). Suppose we have done $k$ steps of the exploration and we have shown that the vertices exposed in these steps converge to i.i.d.\ Poisson$(1)$ and $\rho_R$ is not in the set of exposed vertices with probability tending to $1$ as $n \to \infty$. Let $X_{n,k}$ be the number of vertices exposed up to $k$ steps (which converges by our hypothesis to a sum of $k$ i.i.d.\ Poisson$(1)$). Now let us  analyze step $k+1$ and suppose we are exploring vertex $v$. Pick any unexplored vertex $u$. We know that the minimum weight outgoing edge from $u$ cannot hit the $k$ vertices which are already explored. Therefore by \Cref{lem:exp} part a., the probability that the minimum weight outgoing edge from this vertex hits $v$ is $(n-1-k)^{-1}$.  The number of unexposed vertices is $n-X_{n,k}$. Thus the number of incoming edges to $v$ converge to Poisson$(1)$ provided $X_{n,k}$ is tight in $n$. The tightness of $X_{n,k}$ follows from our assumption that they converge to i.i.d. Poisson$(1)$. The tightness also shows that the probability of $\rho_R$ hitting $u$ tends to 0 as $n \to \infty$.  This concludes the proof. 
\end{proof}

In order to analyze the CLEB process after stage 1, we need to understand the conditional distribution of the weights at the end of stage 1.
More precisely for a fixed and appropriate ${\sf t} , {\sf f}$, a collection of positive numbers $a_{\vec e}$ for all $\vec e \in {\sf t}$, let $$\mathcal{IT}_{n} = \mathcal{IT}_n ({\sf t};a_{\vec e} ,\vec e \in {\sf t} ):=\{\Upsilon_n = {\sf t}, {\sf F}_n = {\sf f}, W(\vec e) = a_{\vec e} \text{ for all }\vec e \in {\sf t}, \rho_R \not \in {\sf f}\}.$$
It is implicit in the definition of $\mathcal{IT}_n$ that the process did not exit.
Condition on $\mathcal{IT}_{n}$. Let us analyze the distribution of the weights of the outgoing edges from  ${\cX}:={\sf V} \setminus \{\cN_{\tauendone} \setminus \{\rho_R\}\}$ (the set of vertices which is either $\rho_R$ or is unexposed by time $\tauendone$). For every vertex in $\cX$, the minimum weight outgoing edge from this vertex cannot have its head in ${\sf f}$. Also, obviously, no weight is subtracted from any unexposed vertex up to time $\tauendone$. Therefore, the joint law of the weights of edges in $\cO(v)$ for any unexposed vertex $v$ is i.i.d.\ Exponential$(1)$ conditioned on the event that the minimum weight outgoing edge from $v$ does not have its head in ${\sf f}$. 

To analyze stage 2, we need to know two things. If the CLEB walk hits an unexposed vertex, we need to know the conditional distribution of the minimum weight outgoing edge from this vertex. This is uniform as explained just above. However, to analyze the process further down the road, we also need to know the joint distribution of the weights when we subtract this minimum weight. 
For exponential weights this turns out to be quite nice as posited by the following corollary, which is a consequence of \Cref{lem:exp}.

Since the graphs and the weights are changing in every step, we first need to get through some notational barriers. Recall $\tilde \cN_t$ is the set of exposed vertices in the contracted graph after $t$ steps of stage 2, i.e., $\tilde \cN_t$ is the union of heads and tails of $S_t \cap \vec E_t$. Let us introduce some related notations. For a vertex $v \in V_t$, let $\cO(v, t)$ be the set of outgoing edges from $v$ in $\vec E_t$.  Let $\cO(v, t, S_t^c)$ be the set of oriented edges in $\cO(v,t) $ that is not in $S_t$ (there can be at most one outgoing edge in $\vec E_t$ for any $v \in V_t$). For $v \in \tilde \cN_t \setminus {\sf f}$, let $\cO(v,t,{\sf f})$ be the set of outgoing edges from $v$ in $\vec E_t$ whose head in the original vertex set ${\sf V}$ is in ${\sf f}$ (recall \Cref{rmk:edge_ambiguity}). Suppose for every $v \in V_t$, $\vec e_{\min}(v,t) $ is the minimum $W_t$-weight unexposed outgoing edge from $v$, i.e., $\vec e_{\min}(v,t)$ is the oriented edge with minimum $W_t$-weight among those in $\cO(v, t, S_t^c)$ (we add the notation $t$ to emphasize the dependence on the weights $W_t$).
Also recall that in every step of the CLEB walk, in a generic step $t+1$, we always choose the `tip' of the exposed path up to step $t$. For any $t>\tauendone$, let $\cE_t$ denote $S_t $ and the weights of the oriented edges in $S_t$. Also, the tip chosen in step $t+1$ is in ${\sf V}$ means that in step $t$ an edge is chosen whose head is unexplored. If the tip is not in ${\sf V}$, then a cycle is created in step $t$.

\begin{corollary}\label{cor:cond_law}
   Condition on $\mathcal{IT}_n ({\sf t};a_{\vec e} ,\vec e \in {\sf t} )$ and $\cE_t$ and assume  $ t \ge \tauendone$.  Let $z$ be the vertex which is to be chosen to explore in step $t+1$, and we assume such an unexplored vertex exists. The following is true almost surely on the conditioning.
   \begin{enumerate}[a.]
   \item For every $v$ in $V_t \setminus \tilde \cN_t $, $\vec e_{\min}(v,t) $ is uniform in $\cO(v,t) \setminus \cO(v,t,{\sf f})$
       \item If $z \in {\sf V}$, then $\vec e_{\min}(z,t) $ is uniform in $\cO(z,t) \setminus \cO(z,t,{\sf f})$. Furthermore, the joint distribution of the $W_t$ weights of $\{\cO(v, t, S_t^c): v \in \tilde \cN_t \setminus \{z\}\} $ is i.i.d.\ Exponential$(1)$
       
      \item If $z \not \in {\sf V}$, then $\vec e_{\min}(z,t) $ is uniform in $\cO(z,t)$. Furthermore, the joint distribution of the $W_t$ weights of $\{\cO(v, t, S_t^c): v \in \tilde \cN_t\} $ is i.i.d.\ Exponential$(1)$.    
   \end{enumerate}
\end{corollary}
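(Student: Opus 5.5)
The plan is to prove all three items simultaneously by induction on $t \ge \tauendone$. The invariant carried along the induction is a description of the conditional law of the whole current weight configuration $W_t$, given $\mathcal{IT}_n$ and $\cE_t$:
\begin{itemize}
\item[(i)] for each $v \in V_t\setminus \tilde\cN_t$ (necessarily a singleton of ${\sf V}$, never touched by any subtraction), the weights on $\cO(v,t)$ are i.i.d.\ Exponential$(1)$, conditioned only on the event that their minimum is not in $\cO(v,t,{\sf f})$;
\item[(ii)] for each $v \in \tilde\cN_t$, the weights on $\cO(v,t,S_t^c)$ are i.i.d.\ Exponential$(1)$, with no further conditioning;
\item[(iii)] all these families are mutually independent and independent of the exposed data.
\end{itemize}
Items a, b, c then follow from this invariant at time $t$ via \Cref{lem:exp} part a. For $z \in {\sf V}$ unexplored, the conditioning "minimum not hitting ${\sf f}$" makes the argmin uniform on $\cO(z,t)\setminus\cO(z,t,{\sf f})$. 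For $z\notin{\sf V}$, that is, a freshly contracted vertex in $\tilde \cN_t$, the weights are unconditioned i.i.d.\ exponentials, so the argmin is uniform on all of $\cO(z,t)$.

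\textbf{Base case} $t=\tauendone$. Condition on $\mathcal{IT}_n$. Any vertex outside $\cN_{\tauendone}\setminus\{\rho_R\}$ has received no subtraction. The only information revealed about such a vertex is that its minimum outgoing edge does not land in ${\sf f}$ (for $\rho_R$, that $\rho_R\notin{\sf f}$ means exactly this as well), which gives (i). For the exposed vertices of $\Upsilon_n$ we use \Cref{lem:exp} part b: conditioned on the argmin and its value $a_{\vec e}$, the subtracted remaining weights are i.i.d.\ Exponential$(1)$, giving (ii). One must also check that the event $\{\Upsilon_n={\sf t},{\sf F}_n={\sf f}\}$ is an intersection over vertices of events, each depending only on that vertex's own outgoing weights (through the location of its argmin). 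This factorisation gives independence, (iii).

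\textbf{Inductive step} from $t$ to $t+1$, where the vertex $z$ is explored. If $z\in{\sf V}$, its weights obey (i). Revealing the argmin $J$ and the value $X_J$ and subtracting $X_J$ leaves, by \Cref{lem:exp} part b applied to the conditioned family, i.i.d.\ Exponential$(1)$ weights on the other edges. The one point to verify is that conditioning on $J\notin\cO(z,t,{\sf f})$ does not spoil part b. This holds because part b is stated conditionally on $J$ itself, and the event $\{J\notin\cdot\}$ is measurable with respect to $J$. So $z$ moves into class (ii). If $z\notin{\sf V}$, its weights already obey (ii), and part b applies directly.

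If no cycle is created, nothing else changes. If a cycle $C$ is created and contracted, the new vertex $v_C$ has outgoing edge set equal to the union, over $u\in C$, of the unexposed outgoing edges of $u$ whose heads lie outside $C$. Each of these families is i.i.d.\ Exponential$(1)$ and independent, and we merely discard the edges internal to $C$, a choice determined by the exposed data. Hence the new weights on $\cO(v_C,t+1)$ are again i.i.d.\ Exponential$(1)$ and independent of the rest, so (ii) and (iii) persist. Vertices untouched at step $t+1$ retain their laws, since the conditioning on $\cE_{t+1}$ involves only $z$'s weights, which are independent of theirs.

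\textbf{Main obstacle.} The delicate point is the bookkeeping of the conditioning. We must make sure that nothing revealed in Stage 1 or in earlier stage 2 steps carries information about unexposed vertices beyond the "avoid ${\sf f}$" constraint. For $\rho_R$ in particular, we must check that its membership in $\cN_{R+1}$ reveals nothing, because its own outgoing edge had not yet been exposed. Writing the conditional density of all weights explicitly as a product over vertices of indicator-weighted exponential densities is the cleanest way to handle this: the product structure then makes (iii) evident.
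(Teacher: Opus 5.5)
Your proposal is correct and follows the paper's proof: induction on $t$, with the base case read off from the stage-1 conditioning via \Cref{lem:exp}, and the inductive step applying \Cref{lem:exp} part b to the explored vertex and noting that contraction merges independent i.i.d.\ Exponential$(1)$ families, with or without a cycle. Your invariant (i)--(iii), which tracks the full conditional law and the independence across vertices, is what the paper's shorter argument uses implicitly, for instance when it applies \Cref{lem:exp} part b to a singleton tip whose law it records only as a uniform argmin. One correction: the unexplored singleton tip $z$ is the head of an exposed edge and so lies in $\tilde\cN_t$, so (i) and (ii) should be indexed by unexplored versus explored vertices rather than by $V_t\setminus\tilde\cN_t$ versus $\tilde\cN_t$, which is how you already treat $\rho_R$.
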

\begin{proof}
    We proceed by induction on $t$. For $t = \tauendone$, $z = \rho_R$ which is in ${\sf V} $ since we have not exited the process. In this case parts a. and b. are simple consequences of \Cref{lem:exp}. Indeed,  if $v \in (V_t \setminus \tilde \cN_t) \cup \{z\}$, then we apply \Cref{lem:exp} part a. (with $S = \cO(v,t,{\sf f})$) Furthermore, for the joint distribution of the $W_t$ weights of $\{\cO(v, t, S_t^c): v \in \tilde \cN_t \setminus \{z\}\} $, we apply \Cref{lem:exp}, part b.

    Now suppose the lemma is true at time $t>\tauendone$. If there is no vertex left to explore, we are done, so suppose such a vertex $z$ exists.  In step $t+1$, we explore $z$ and suppose we expose $\vec e$. Part a. follows immediately from induction since $\tilde \cN_{t+1} \supseteq \tilde \cN_t$ and since exploring $z$ do not affect the weights of any oriented edge whose tail is not in $z$.

   For the remaining parts, observe that $z$ may or may not be in ${\sf V}$, but irrespective of that, \Cref{lem:exp}, part b. tells us that after exposing $\vec e$, the joint law of $$\{W_t(\vec f)  - W_t(\vec e): \vec f \in \cO(z,t,S_t^c) , \vec f \neq \vec e\}$$
    is i.i.d.\ Exponential$(1)$. Therefore, if $\vec e$ creates a cycle, then we are in the situation of part c.  and hence the lemma follows from induction and the above observation.

   Now  suppose that $\vec e_+$ is a previously unexplored vertex, which necessarily is in ${\sf V}$ and hence we only need to prove part b. The first part of part b. follows from part a. of induction.
   The second part of part b. follows from \Cref{lem:exp}, part b. and the same observation as in the previous case.
\end{proof}

\subsection{Analysis of Stage 2 and 3.}\label{sec:2 and 3}

Define 
\begin{equation}
  \P^{\sf t}(\cdot ) := \P(\cdot | \mathcal{IT}_n ({\sf t};a_{\vec e} ,\vec e \in {\sf t} ))\label{eq:Pt}  
\end{equation}
to be the conditional law after stage 1 given that $\sf t$ and the weights on the edges of ${\sf t}$ is exposed and $\rho_R\notin{\sf f}$.

First we need a lemma which says that we can immediately uncontract after stage 2 and we don't need to wait until stage 3 to obtain the MSA.

\begin{lemma}\label{lem:stage2_and_3flip}
    Let $C:=(C_1,\ldots, C_m)$ be the sequence of cycles contracted in stage $2$. Suppose $D:=(D_1,\ldots, D_p) $ is the sequence of cycles contracted in stage 3. Then uncontracting the cycles in $C$ in reverse order first and then $D$ in reverse order also produces the MSA. 

    Uncontracting the cycles in $C$ in reverse order produces a spanning arborescence of the graph spanned by $\cN_{\tauend}$ with boundary $\partial$.
\end{lemma}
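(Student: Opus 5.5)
The plan is to show that the cycles of $D$ are vertex-disjoint from everything touched by stage 2, so that uncontracting $C$ and $D$ commute. First I will establish a structural fact about the graph at the end of stage 2. At time $\tauend$, the set $S_{\tauend}\cap \vec E_{\tauend}$ restricted to the vertices of $\tilde \cN_{\tauend}$ is a spanning arborescence of the subgraph spanned by $\tilde\cN_{\tauend}$ with boundary $\partial$. This holds because stage 1 produced arborescences whose boundaries lie in $\cN_{R+1}$, and stage 2 is a CLEB walk from the tip $\rho_R$ which ends when it hits $\partial$ (by \Cref{lem: arborescence_in_every_step} the exposed set remains an arborescence). Any contracted vertex of $V_{\tauend}$ is formed from cycles lying inside $\cN_{\tauend}$. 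In stage 3, each CLEB walk starts at an unexposed singleton and stops as soon as it hits $\tilde\cN$ of the previous time. Consequently every cycle $D_i$ contracted in stage 3 consists only of vertices, possibly contracted ones, that were created during stage 3. No vertex of $\tilde \cN_{\tauend}$ ever receives a new outgoing edge, since it already has one or is $\partial$. Hence no vertex of $\tilde\cN_{\tauend}$ lies on any $D_i$, because a cycle needs every vertex on it to have an exposed outgoing edge within the cycle. I will prove this by induction over the stage 3 walks.

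Second, I will show commutation. Each uncontraction only modifies edges whose tail lies in $\cont$ of the uncontracted vertex: it re-adds the edges of the cycle and deletes the single cycle edge at the vertex through which the outgoing edge exits. Since $\cont(D_i)$ is disjoint from $\cN_{\tauend}$, while $\cont(C_j)\subseteq\cN_{\tauend}$, the operations act on disjoint sets of tails. The only interaction goes through the head of the exiting edge. An edge leaving a $D$-vertex may have its head in a $C$-contracted vertex, and the head of that edge in the original graph is fixed regardless of the uncontraction order. Uncontracting $C_j$ decides which outgoing edge of $\cont(C_j)$ to delete, and this depends only on the outgoing edge of $v_{C_j}$ in the current arborescence. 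That edge has its tail in $\cN_{\tauend}$ and was exposed in stage 2, so it is unaffected by the $D$-operations. Symmetrically, the outgoing edge of $v_{D_i}$ is unaffected by the $C$-operations. A formal induction swapping adjacent uncontractions $\Psiuc$ will show that the final edge set is the same as in the chronological reverse order (all of $D$ reversed, then all of $C$ reversed), which by \Cref{thm:CLEB} is the MSA.

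Third, the second claim. Uncontracting $C$ in reverse order, applied to the arborescence on $\tilde\cN_{\tauend}$ from the first step, yields edges with tails in $\cN_{\tauend}\setminus\{\partial\}$, exactly one per vertex, and no cycles. This is by the same argument as in the definition of $\Psiuc$, applied within the graph spanned by $\cN_{\tauend}$. The result is therefore a spanning arborescence of that graph with boundary $\partial$.

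The main obstacle is the first step: carefully verifying that no stage 3 cycle can involve a stage-2-contracted vertex, given the ambiguity of heads and tails under contraction (\Cref{rmk:edge_ambiguity}). The key is that the walk stops upon hitting $\tilde\cN$ (item b or c, or hitting a contracted exposed vertex) before any outgoing edge from it could be exposed.
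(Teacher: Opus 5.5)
Your proposal is correct and follows the paper's approach. The paper's two-line proof rests on the same observation: no stage-3 cycle involves anything exposed in stage 2, so the $C$- and $D$-uncontractions act independently. It then defers details to \cite{RS24}; you instead spell out the vertex-disjointness induction and the commutation of the $\Psiuc$ operations yourself.
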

\begin{proof}
    In stage $3$ no cycle is contracted which contain an edge in $S_{\tauend}$. Thus the uncontraction procedure in the two stages can be done `independent' of each other.  A more general version of this lemma is proved in \cite[Lemma 4.2]{RS24}
\end{proof}

We need some definitions to set up the analyses of stages 2 and 3.
For every vertex $v$ in $G_j$, we define $\cont(v)$ to be the set of vertices which were contracted to create $v$. More formally, for every $v \in V_0 ={\sf V}$, we define $\cont(v)=v$. Having defined $\cont(v)$ for all $v \in V_j$, we inductively define $\cont(v), v\in V_{j+1}$ as follows. If no cycle is contracted in step $j+1$, $V_{j+1}=V_j$ and $\cont(v)$ is unchanged for every $v \in V_{j+1}$. If a cycle $C$ is contracted in step $j+1$, we keep $\cont(v)$ unchanged for every $v \in V_{j+1}\setminus\{v_C\}$. Let $U$ be the set of vertices contracted in step $j+1$ to create $C$ (i.e. $U$ is the heads of the oriented edges in $C$). Now define $\cont(v_C)= \cup_{u \in U}\cont(u)$. Observe that if a vertex $v$ is in $V_i$ as well as $V_j$ then $\cont(v)$ is the same for both, and hence there is no ambiguity in the  definition of $\cont$ (in the sense that we do not need to specify which graph $v$ belongs to). To keep track of the `tip' after $i$-th step, let's define the following sequence $(X_i)_{i\geq\tauendone}$. $X_{\tauendone}:=\rho_R$. If in $i$-th step, CLEB walk hits a previously unexposed vertex $v\in{\sf V}$, define $X_i=v$. If in $i$-th step, CLEB walk contracts a cycle $C$, then define $X_i=v_C$. Note that $X_i$ is always a vertex in $G_i$. We will call $X_i$ the \textbf{tip} of the CLEB walk after step $i$. If $A\subset V_j$, for some $j$, $\cont(A):=\cup_{u\in A}\cont(u)$. Define $\Tip_j:= \cont(X_j)$ and $\tip_j:=|\Tip_j|$ for all $j\geq\tauendone$. In other words, $\Tip_j$ is the set of vertices in ${\sf V}$ which were contracted (perhaps multiple times) to create the `tip' after $j$-th step.

Recall that $\tauend$ is the stopping time when stage 2 of the local CLEB process is completed, i.e., $\tauend$ is the first time the CLEB walk from $\rho_R$ hits $\partial.$ 
Our plan is to define a bad event and show that if the bad event does not happen, then after the uncontraction process, the arborescence ${\sf t}$ is `undisturbed' in the sense that no edge is added whose head is in ${\sf f}$ and no edge of ${\sf t}$ is erased.
 First of all, if the CLEB walk does not hit ${\sf f}$ then ${\sf t}$ is not part of any cycle and is unchanged. However recall from the heuristics of the proof outline in \Cref{sec:outline}  that, in fact, ${\sf f}$ will be hit with positive probability.

\begin{figure}[t]
    \centering
    \includegraphics[width=0.6\linewidth]{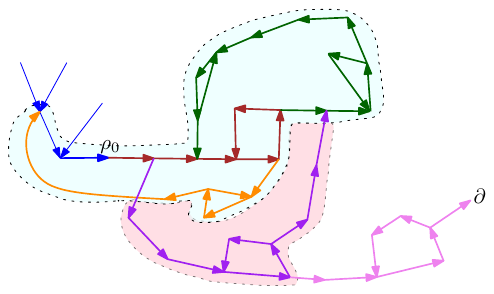}
    \caption{An illustration of $\hitpast$ and $\hittree$ in stage 2 with $R=0$. The incoming tree ${\sf t}$ is colored blue. The edges exposed in steps $(\tauendone,\tau_1]$ are colored brown, those in $(\tau_1,\tau_2]$ are colored green, those in $(\tau_2,\tau_3]$ are colored orange, those in $(\tau_3,\tau_4]$ are colored purple, and the final excursion $(\tau_4,\tauend]$ is colored violet ($J_{\max}=4$). Here $\alpha_1=\tau_3$. At time $\alpha_1=\tau_3$, all the vertices inside the light cyan area are contracted into a single vertex $X_{\tau_3}$, and at time $\tau_4$, the vertices in the pink and the light cyan area are contracted into the vertex $X_{\tau_4}$. 
    The blue edges outside the light cyan  form ${\sf t}_{\tau_3} = {\sf t}_{\tau_4}$. }
    \label{fig:excursion}.
\end{figure}

 But ${\sf t}$ can remain unchanged after uncontraction even if ${\sf f}$ is hit and to describe that we need a finer analysis of the uncontraction process. We now define the successive times when the local CLEB process hits its `past'.
 More formally, let $$\tau_1 = \min\{\tauendone\le t<\tauend : \text{local CLEB process hits $\cN_{t-1}$ in step } t\}.$$ Inductively, having defined $\tau_j$ for $j \ge 1$, define $$\tau_{j+1} = \min \{\tau_j< t< \tauend: \text{local CLEB process hits $\cN_{\tau_j}$ in step $t$}\}.$$
Set the convention $\inf \emptyset = \infty$, which implies that if $\tau_j=\infty$ then $\tau_{m}=\infty$ for all $m>j$. Let $$\hitpast := \{\tau_j: \tau_j <\infty\}$$ be the set of times when the `past' is hit. Observe that the vertices in $\cont(X_{\tau_j})$ is `eaten' by the vertices in $\cont(X_{\tau_{j+1}})$ (i.e. $\cont(X_{\tau_j}) \subset \cont(X_{\tau_{j+1}})$). This is related to the `snowball' effect as described in the proof outline (\Cref{sec:outline}).

Now we define the sequence of stopping times when the local CLEB process hits ${\sf f}$ in stage $2$. Let $\alpha_0=\tauendone$ and $$
\alpha_j=\inf\{\alpha_{j-1}<t\leq\tauend:\text{the local CLEB process hits ${\sf f}$ at time $t$}\},$$
for all $j\geq 1$, where we adopt the convention that $\inf \emptyset = \infty.$ Let $\hittree = \{\alpha_j: \alpha_j <\infty\}$. Note $$\hittree\subseteq \hitpast.$$

Let us spend a little time analyzing the geometry of the CLEB walk at the times $\hitpast$ and $\hittree$. Now observe that at any time $\tau_j\in \hitpast \cap (\alpha_1,\infty)$, it must be the case that either $\cont(X_{\tau_{j-1}})$ or ${\sf f}$ is hit. Therefore, at time $\tau_j$, all the vertices exposed between $\tau_{j-1}$ and $\tau_{j}$ become part of $\cont(X_{\tau_{j}})$. Also, all the vertices ever exposed up till time $\tau_j$ are all in $X_{\tau_j}$ (see \Cref{fig:excursion}). Consequently, the exposed edges in the contracted graph $G_{\tau_j}$ (i.e. $S_{\tau_j} \cap \vec E_{\tau_j}$) is an arborescence ${\sf t}_{\tau_j}$ with a single boundary vertex in $V_{\tau_j}$ which is $X_{\tau_j}$. 
If $k=\tauendone$, then $X_k = \rho_R$.
Let $J_{\max}$ be the largest $j$ so that $\tau_j<\infty$. We call the time intervals $(\tauendone,\tau_1],(\tau_1,\tau_2],\cdots,(\tau_{J_{\max}},\tauend]$ \textbf{excursions}. We call the interval $(\tau_{J_{\max}}, \tauend]$ the \textbf{final excursion}. 
\begin{figure}[h]
    \centering    \includegraphics[scale = 1.0]{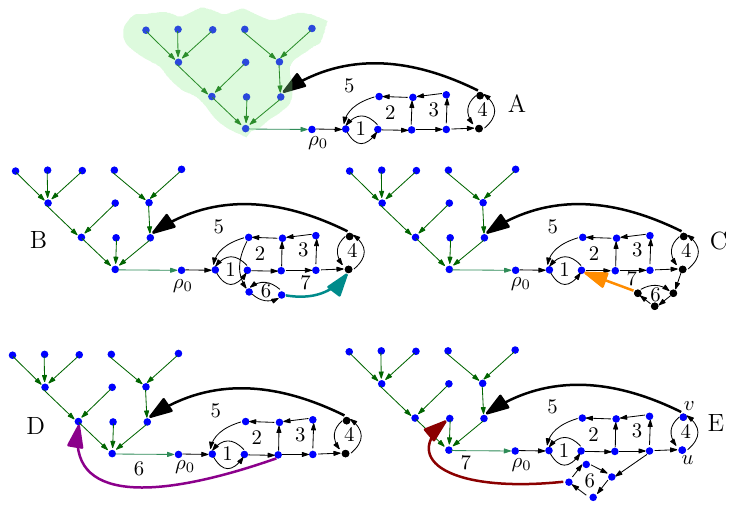}
    \caption{We illustrate the growth of blue vertices in the local CLEB process here. In A, the incoming arborescence ${\sf t}$ (colored green) is attached to $\rho_0$. Local CLEB process starts from $\rho_0$ and creates some cycles numbered chronologically as $1,2,3$ at times $\tau_1,\tau_2,\tau_3$. Then it creates $4$-th cycle and hits back the tree for the first time. So that $\alpha_1=\tau_4$ and we color $\cN_{\alpha_1}\setminus\Tip_{\alpha_1-1}$ blue. In B, at $\tau_4+1$, the process goes out from a blue vertex and at $\tau_5$ it doesn't hit ${\sf f}$. It creates cycles $6$ and $7$ in $(\tau_4,\tau_5]$. Thus the set of blue vertices grows. In C, at $\tau_4+1$, it goes from a non-blue vertex and doesn't hit ${\sf f}$ at $\tau_5$. This keeps the set of blue vertices unchanged. In D, at $\tau_4+1$, the local CLEB process goes out from a vertex and hits ${\sf f}$ immediately, which again keeps the set of blue vertices unchanged. In E, after $\tau_4$, it hits ${\sf f}$ after creating the $6$-th cycle. This is very bad event for our purposes and we make every vertex blue in this case, including $u$ and $v$.}
    \label{fig:Bad final 2}
\end{figure}

We now inductively color some of the vertices blue. The blue vertices will track a certain set of vertices which we will eventually see are `bad' in the following sense: if the final excursion does not start at blue then ${\sf f}$ is guaranteed to be `undisturbed' after the uncontraction process. Our final goal would be to prove later that the probability that the last excursion starts at a blue vertex is small, or in effect, the set of blue vertices do not grow very much with high probability.

With this in mind, 
we now define an increasing sequence of blue vertices $(\cB_k)_{\tauendone \le k \le \tau_{J_{\max}}}$ when step $k$ is completed. Define $\cB_{\tauendone} = {\sf f}$, that is, at the beginning of stage $2$, we make every vertex of ${\sf f}$ blue. Let $B_k=|\cB_k|$. 
\begin{itemize}
    \item 
If $\alpha_1=\infty$ (i.e. ${\sf f}$ is never hit), the set of blue vertices remain unchanged forever. That is, $\cB_k={\sf f}$, for all $\tauendone\leq k\leq\tau_{J_{\max}}$.
\item For $k<\alpha_1$, no vertex other than those in ${\sf f}$ is colored blue, i.e. $\cB_k ={\sf f}$.
\item If $\alpha_1<\infty$, color every vertex except the tip at step $\alpha_1$ blue. That is, color $\cN_{\alpha_1} \setminus \Tip_{\alpha_1-1}$ blue (see \Cref{fig:Bad final 2}, A).
\item Suppose we have defined the set of blue vertices $\cB_{\tau_j}$ for $\tau_j \ge \alpha_1$. Consider the next excursion $(\tau_j, \tau_{j+1}]$ and suppose it's not the final excursion. Then there could be several cases. Any such excursion `starts' and `ends' at a vertex in $\cN_{\tau_j}$. More formally, let $\vec e$ be the edge exposed in step $\tau_j+1$ and let $\vec f$ be the edge exposed in step $\tau_{j+1}$. Then define the starting point of the excursion to be the vertex $\vec e_-$ in ${\sf V}$ and the ending point of the excursion to be the vertex $\vec f_+$ in ${\sf V}$. 
\begin{itemize}
    \item If ${\sf f}$ is hit immediately, change nothing. More formally, if  $\tau_{j}+1  = \tau_{j+1} \in \hittree$ then keep the set of blue vertices unchanged (see \Cref{fig:Bad final 2}, D).
    \item Otherwise, we have several cases.
    \begin{itemize}
\item [a.] If the ending point of the excursion is in ${\sf f}$ (that is, $\tau_{j+1} \in \hittree$ and $\tau_{j+1}>\tau_j+1$) then we color everything blue for all future times up until $\tau_{J_{\max}}$, i.e. $\cB_k= \cN_k$ for all $\tau_{J_{\max}} \ge k \ge \tau_{j}+1$. (\Cref{fig:Bad final 2}, E)
    \item [b.] If the starting point of the excursion is not blue and the ending point is not in ${\sf f}$ then keep the set of blue vertices unchanged. (\Cref{fig:Bad final 2}, C)
    
    \item [c.] If the starting point of the excursion is blue and the ending point is not in ${\sf f}$ then color the vertices added in the excursion blue. That is, $\cB_{\tau_{j+1}} = \cB_{\tau_j} \cup \{\cN_{\tau_{j+1}} \setminus \cN_{\tau_j}\}$. Define $\cB_k = \cB_{\tau_j}$ for all $\tau_j \le k <\tau_{j+1}$. (\Cref{fig:Bad final 2}, B)
\end{itemize}
   
\end{itemize}  
\item Keep the blue vertex set unchanged for the final excursion.
\end{itemize}
Roughly, if we hit ${\sf f}$ in an excursion (but not immediately) then we imagine `a very bad event' has occurred and color everything blue. Otherwise, if an excursion starts in blue, we color the excursion blue and otherwise we keep the blue vertices unchanged.

\begin{figure}[h]
    \centering
    \includegraphics[scale=1.1]{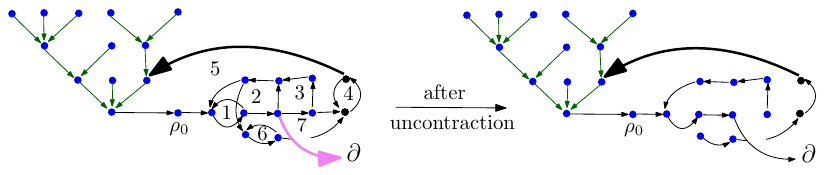}
    \caption{It shows that if in the last excursion, the process goes out from the set of blue vertices then after uncontraction, there will be an incoming edge to the set of vertices ${\sf F}_n$ sampled in stage 1.}
    \label{fig:Bad uncnt}
\end{figure}
For completion define $\cB_k = \cB_{J_{\max}}$ for all $k \ge \tau_{J_{\max}}$ and also $B_\infty = B_{\tau_{J_{\max}}}$, so for example $\cB_{\tau_j} =\cB_\infty= \cB_{\tau_{J_{\max}}}$ if $j >J_{\max}$.
Now we define a bad event
$$
\mathfrak B:=\text{the starting point of the final excursion is blue}.
$$
Finally, we define bad event for stage $3$ as follows: 
$$\mathfrak D:=\text{at least one of the local CLEB process steps hits $\sf f$ in stage 3.}$$
Recall that $M_n$ denoted the minimum spanning arborescence of $(\vec K_n, \partial)$ for weights $W_n$. We emphasize that the following lemma is deterministically true for any collection of generic weights. Define the future of $v$ in an arborescence $t$ to be the set of vertices in the oriented path from $v$ to the boundary (denoted $\mathfrak F_t(v)$). Define the past of $v$ to be the set of vertices $u$ such that $v \in \mathfrak F_t(u)$, and denote it by ${\sf  Past}_t(v)$ (see \Cref{fig:CLEB}).


\begin{prop}\label{lem:no_bad_no_disturb}
Suppose that $\cI\cI_n(\sf t )$ occurs in stage 1 and  $\mathfrak B \cup \mathfrak D$ does not occur in stages $2$ and $3$. Then all the edges in ${\sf t}$ belong to  $M_n$. Furthermore, there is no edge in $M_n\setminus{\sf t}$ whose head in ${\sf V}$ is in ${\sf f}$. 
\end{prop}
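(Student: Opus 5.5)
We track the MSA through the uncontraction phase, and we use \Cref{lem:stage2_and_3flip} to separate stage 2 from stage 3. Write $\vec T^{(2)}$ for the spanning arborescence of the graph spanned by $\cN_{\tauend}$ obtained by uncontracting the stage-2 cycles $C_m,\dots,C_1$. Then $M_n$ is obtained from $\vec T^{(2)}$ together with the stage-3 exposed edges, by uncontracting $D_p,\dots,D_1$. On $\mathfrak D^c$ no stage-3 step hits ${\sf f}$. Two consequences follow. First, no stage-3 exposed edge has its head (in ${\sf V}$) in ${\sf f}$. Second, no stage-3 cycle contains a vertex of ${\sf f}$: every vertex of ${\sf f}$ already carries an exposed outgoing edge, and by \Cref{lem:stage2_and_3flip} the stage-3 cycles avoid $S_{\tauend}$. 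Uncontracting the $D_i$ only deletes edges lying in the $D_i$. Hence it suffices to prove the two claims for $\vec T^{(2)}$: every edge of ${\sf t}$ belongs to $\vec T^{(2)}$, and no edge of $\vec T^{(2)}\setminus{\sf t}$ has its head in ${\sf f}$.

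\textbf{Trivial case.} If $\alpha_1=\infty$, no stage-2 edge has its head in ${\sf f}$. No cycle can contain an edge of ${\sf t}$, since such a cycle would have to re-enter ${\sf f}$. Uncontraction only removes cycle edges, so ${\sf t}\subseteq\vec T^{(2)}$ and nothing new points into ${\sf f}$.

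\textbf{Main case, $\alpha_1<\infty$.} We run an induction backwards over the excursions, i.e.\ over the uncontraction of the snowballed tips $X_{\tau_{J_{\max}}}\supseteq\dots\supseteq X_{\tau_1}$. The key structural fact is the following: uncontracting a cycle $C$ keeps exactly one outgoing edge leaving $\cont(v_C)$, namely the one selected later in the arborescence. Inside $\cont(v_C)$, the vertex $u$ whose outgoing edge in $C$ gets deleted is the tail (in ${\sf V}$) of that kept edge, and every other vertex keeps its cycle edge. Applied to the final excursion, the kept edge leaving $X_{\tau_{J_{\max}}}$ is the one exposed at step $\tau_{J_{\max}}+1$. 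Its tail in ${\sf V}$ is the starting point $s$ of the final excursion, and on $\mathfrak B^c$ the vertex $s$ is not blue. So after uncontraction, $s$ becomes the exit vertex of $\cont(X_{\tau_{J_{\max}}})$: its outgoing edge is the escape edge, and the remaining vertices route towards $s$ through the kept cycle and tree edges.

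We then peel excursions one at a time, proving the following invariant by backward induction on $j$: the exit vertex of $\cont(X_{\tau_j})$ is non-blue. In the uncontraction, the edge of the cycle closing excursion $(\tau_{j-1},\tau_j]$ that gets deleted is determined by where the exit vertex lies. There are two cases.
\begin{itemize}
\item If the exit vertex lies among the vertices added in that excursion, the excursion-closing edge into $\cont(X_{\tau_{j-1}})$ (or into ${\sf f}$) is kept. This is harmless only if the excursion started at a non-blue vertex, so that the invariant passes to the exit $=$ starting point in $\cont(X_{\tau_{j-1}})$. Indeed, the blue-coloring rules c.\ and a.\ guarantee that if the excursion started blue, or hit ${\sf f}$ non-immediately, then all its added vertices are blue, contradicting the non-blueness of the exit vertex.
\item If the exit vertex lies in $\cont(X_{\tau_{j-1}})$, the excursion's own final edge is deleted.
\end{itemize}
Immediate hits of ${\sf f}$ (rule D) are one-edge excursions whose single hitting edge is deleted, because the exit vertex is non-blue and therefore not the tail. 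At $j=\alpha_1$, every exposed vertex except $\Tip_{\alpha_1-1}$ is blue. So the exit lies in $\Tip_{\alpha_1-1}$, which means the edge hitting ${\sf f}$ at step $\alpha_1$ is deleted. The edges of ${\sf t}$ are never the deleted edge, since their tails lie in ${\sf f}\subseteq\cB$. Combining these, every edge of ${\sf t}$ survives, and every edge with its head in ${\sf f}$ that is not in ${\sf t}$ is deleted.

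The main obstacle is making this backward induction rigorous. Uncontraction happens cycle by cycle, not excursion by excursion, and a single excursion contains many nested cycles, so we must check that the "exit vertex" notion composes correctly through the nested $\cont$ sets. To handle this, we plan to prove the one-cycle statement first, as a lemma: after uncontracting $C$, the unique vertex of $\cont(v_C)$ whose outgoing edge leaves $\cont(v_C)$ is the ${\sf V}$-tail of the kept edge. We then iterate this lemma, using the fact that each $X_{\tau_j}$ contains all earlier exposed vertices.
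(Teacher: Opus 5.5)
\textbf{Verdict: your route is the paper's, but your second case misidentifies the deleted edge, and that error hides the one configuration that rule a.\ exists to rule out.}

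Your architecture matches the paper's proof:
\begin{itemize}
\item separate the stages with \Cref{lem:stage2_and_3flip};
\item dispose of $\alpha_1=\infty$;
\item run a backward induction over excursions, with the invariant that the ${\sf V}$-tail of the edge leaving $X_{\tau_j}$ (your exit vertex) is not blue;
\item finish with the deletion of the edge exposed at step $\alpha_1$.
\end{itemize}
The stage-3 reduction, the base case, the immediate-hit case and the step at $\alpha_1$ are fine.

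\textbf{The error in your second case.} When the excursion ends in $\Tip_{\tau_{j-1}}$, the cycle closed at time $\tau_j$ is $X_{\tau_{j-1}}\to z_1\to\cdots\to z\to X_{\tau_{j-1}}$ in $G_{\tau_j-1}$. When it ends at some $w\in{\sf f}$, the cycle runs $z\to w$ and then along a ${\sf t}$-path back to $X_{\tau_{j-1}}$. By your own structural fact, the deleted edge is the cycle edge leaving the $G_{\tau_j-1}$-vertex that contains the exit vertex. If the exit lies in $\cont(X_{\tau_{j-1}})$, that vertex is $X_{\tau_{j-1}}$. So the deleted edge is the \emph{first} edge of the excursion, exposed at step $\tau_{j-1}+1$, not the final one. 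The closing edge exposed at step $\tau_j$ survives into $M_n$. This is exactly what the paper says in its blue-start case. (In your first case the closing edge is in fact deleted when the exit lies in $\cont(z)$. That slip is harmless, since there you only use the survival of the first edge.)

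\textbf{Why it matters.} Because the closing edge survives, you still owe an argument that its head is not in ${\sf f}$. Your paraphrase of rule a.\ (``all its added vertices are blue'') cannot supply it. Under that weak rule, consider a non-immediate excursion after $\alpha_1$ ending at $w\in{\sf f}$, with the exit vertex staying in $\cont(X_{\tau_{j-1}})$. This is compatible with your invariant and with $\mathfrak B^c$, and it would leave the edge $z\to w$ in $M_n\setminus{\sf t}$, disturbing the incoming tree. This is panel E of the figure illustrating the growth of blue vertices.

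\textbf{The fix.} What excludes this scenario is the full strength of rule a.: it sets $\cB_k=\cN_k$ for all subsequent $k$. The starting point of the final excursion is then blue, so $\mathfrak B$ occurs. Hence on $\mathfrak B^c$, every excursion after $\alpha_1$ with $\tau_j>\tau_{j-1}+1$ ends outside ${\sf f}$, and the surviving closing edges are harmless. The only stage-2 edges outside ${\sf t}$ with head in ${\sf f}$ are then:
\begin{itemize}
\item the hitting edge at $\alpha_1$;
\item the hitting edges at immediate hits.
\end{itemize}
Each of these is deleted, because the exit lies in the tip from which it was exposed. With this correction your proof coincides with the paper's, which handles the nested-cycle bookkeeping at the same excursion-level granularity you propose.
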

\begin{proof}
The proof proceeds in two steps. First we show that if $\mathfrak B$ does not occur, then after the uncontraction procedure, none of the edges of ${\sf t}$ is erased and no edge is added whose head is in ${\sf f}$. Recall from \Cref{lem:stage2_and_3flip} that we can first complete stage 2, uncontract all the cycles, then complete stage 3 and uncontract the cycles, and this does not affect the output of the local CLEB algorithm. We will exploit this here and first uncontract the cycles created in stage 2.

Suppose $\alpha_1 = \infty$, then the only vertices which are blue are in ${\sf f}$. In this case, no cycle is created with any edge in ${\sf f}$  as ${\sf f }$ is not hit.  So in this case the proposition is immediate.

 So now suppose $\alpha_1<\infty$. Suppose we  uncontract all the cycles created after some finite $\tau_j \ge \alpha_1$ (in reverse chronological order). After doing this, we obtain an arborescence $M_{\tau_j}$ of $G_{\tau_j}$ with boundary $\partial \cup \{{\sf V} \setminus \cN_\tauend\}$ (the vertices in ${\sf V} \setminus \cN_\tauend$ are singletons which are unexplored, see the general definition of arborescence in \Cref{def:general_arborescence}, and in effect $M_{\tau_j}$ is a spanning arborescence of the graph spanned by $\cN_{\tauend}$ with boundary $\partial$). See \Cref{fig:excursion} for reference.
 
We claim that if $\mathfrak B$ does not occur then the following is true for all times $\tau_j$ in $\hitpast \cap [\alpha_1,\infty)$.
\begin{itemize}
    \item ${\sf Past}_{M_{\tau_j}}(X_{\tau_j}) = {\sf t}_{\tau_j}$ 
    \item The tail in $\Tip_{\tau_j}$ of the outgoing edge of $M_{\tau_j}$ from $X_{\tau_j}$  is not blue. 
\end{itemize}
\begin{figure}[h]
    \centering
\includegraphics[scale=0.8]{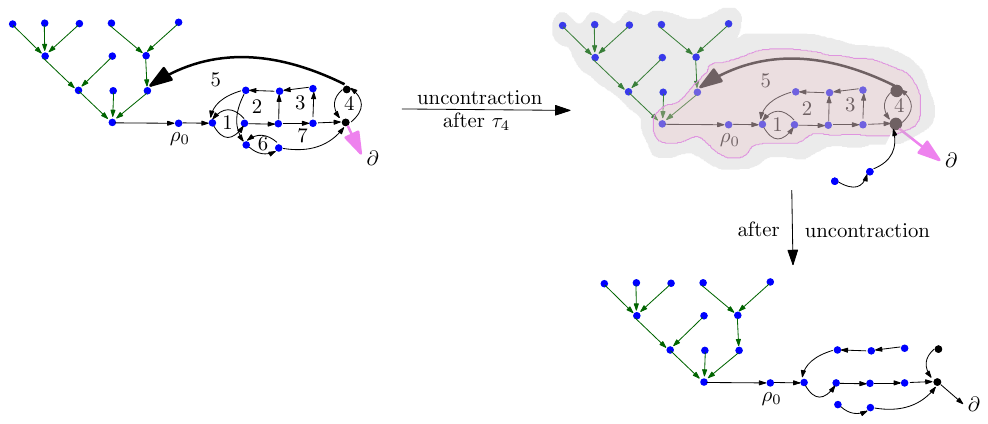}
    \caption{An illustration of how a good event in stage 2 does not `disturb' ${\sf t}$. The cycles are numbered in the order of their creation. The last cycle (numbered 7) is created at time $\tau_5 = \tau_{J_{\max}}$. Also $\alpha_1=\tau_4$ happens when cycle numbered 5 is created.
    The last excursion is marked in violet. Here the bad event doesn't occur, that is, the last excursion starts at a non-blue vertex. If we uncontract all the cycles created after $\tau_4=\alpha_1$, then the past of $X_{\tau_4}$ (which is the green edges in the gray part) is ${\sf t}_{\tau_4}$ (the green edges intersecting the gray region). Also notice that the last excursion (colored violet) starts at a non-blue vertex. After uncontracting cycle $5$, the thick oriented edge hitting ${\sf f}$ must be removed. After uncontracting all the cycles, the incoming tree isn't disturbed as claimed in \Cref{lem:no_bad_no_disturb}.}
    \label{fig:uncnt_good}
\end{figure}
We first explain why it is enough to prove this claim (see \Cref{fig:uncnt_good} for an illustration). Suppose the claim is true at $\alpha_1$. Recall that we must contract a cycle in the $\alpha_1$-th step (call it $C$) and color $\cN_{\alpha_1} \setminus \Tip_{\alpha_1-1}$ is blue. Therefore by the claim, the outgoing edge from $C$ in $M_{\alpha_1}$ has to be in $\Tip_{\alpha_1-1}$. Therefore, when we uncontract $C$, we erase the oriented edge exposed in step $\alpha_1$. Also observe that ${\sf t}_k \supseteq {\sf t}$  for all $k<\alpha_1$ (since the tree is not hit at all before $\alpha_1$). Combining these two observations, after uncontracting $C$, the arborescence $M_{\alpha_1-1}$ obtained has all the oriented edges of ${\sf t}$ and furthermore, $M_{\alpha_1-1}$ contains no oriented edge whose head is in ${\sf t}$. Since no cycle is created strictly before time $\alpha_1$ containing any edge from ${\sf t}$, their uncontraction has no further effect on ${\sf t}$. Hence the proposition follows. 

We are left to prove the claim.
We prove the claim by backward induction in time, we refer to \Cref{fig:case1,fig:case2,fig:case3} for illustrations of the cases. For $j = J_{\max}$, observe that no cycle is created after $J_{\max}$ which contains any edge in $S_{\tau_{J_{\max}}}$. Hence the first edge $\vec e$ exposed in the final excursion is present in $M_{\tau_{J_{\max}}}$.
Since we are in the complement of $\mathfrak B$, the tail of $\vec e$ cannot be blue. Furthermore, since $S_{\tau_{J_{\max}}}$ is not hit, ${\sf t}_{\tau_{J_{\max}}}$ is `undisturbed'. Therefore, the claim is true for the base case $j={J_{\max}}$.

If $\tau_{J_{\max}}=  \alpha_1$ then we are done so suppose that is not the case. So suppose the claim is true for some $\tau_j \in \hitpast \cap (\alpha_1,\infty)$. Now we uncontract in reverse order all the cycles created  in the excursion $(\tau_{j-1},\tau_j]$ and argue that the claim is true at time $\tau_{j-1}$. 
\begin{figure}[h]
    \centering
\includegraphics[scale=1]{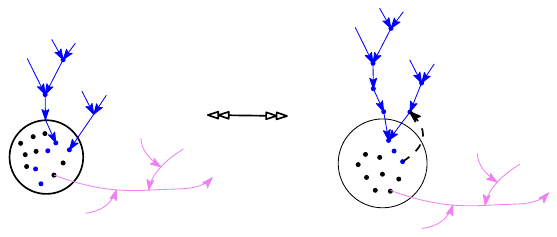}
    \caption{Proof of induction in \Cref{lem:no_bad_no_disturb}, $\tau_j =\tau_{j-1}+1$ case.}
    \label{fig:case1}
\end{figure}
Suppose first $\tau_{j-1} = \tau_j-1$, and this necessarily implies $\tau_j \in \hittree$ since $\tau_j > \alpha_1$ (that is, ${\sf f}$ is immediately hit after step $\tau_{j-1}$), see \Cref{fig:case1} for an illustration. In \Cref{fig:case1}, all the vertices in $\Tip_{\tau_{j}}$ are represented by the dots inside the disc in the left. We ignore here the graph structure and only keep track of the vertices which are blue. Similarly, $\Tip_{\tau_{j-1}}$ is represented by the dots inside the disc in the right hand side. 
Here, the only vertices left in $\cN_{\tau_{j-1}}$ which are not in $\Tip_{\tau_{j-1}}$ are in ${\sf f}$ as we assumed $\tau_{j-1} \ge \alpha_1$.
In this case, the cycle $C$ created thus has all vertices blue except $X_{\tau_{j-1}}$. By induction hypothesis, since the tail of the outgoing edge from $X_{\tau_{j}}$ in $M_{\tau_j}$ is not blue (the outgoing violet part in the left hand side of \Cref{fig:case1}), we must erase the edge exposed in step $\tau_j$ (the dashed edge in the right hand side of \Cref{fig:case1}). Therefore the outgoing edge of $M_{\tau_{j-1}}$ from $\Tip_{\tau_{j-1}}$ remains the same vertex, which is not blue by induction). Furthermore by induction, $\textsf{Past}_{M_{\tau_j}}(X_{\tau_j}) = {\sf t}_{\tau_j}$ and ${\sf t}_{\tau_j}$ is obtained from ${\sf t}_{\tau_{j-1}}$ by contracting the cycle $C$. Since we did not delete any edge in ${\sf t}$ when we uncontracted $C$, the claim is true at $\tau_{j-1}$ as well.

So now assume $\tau_{j}>\tau_{j-1}+1$. This excursion cannot end in ${\sf f}$ as this would make every vertex blue from $\tau_j$ onward, which violates the induction hypothesis. So now suppose the excursion ends in a vertex not in ${\sf f}$. This implies that no  new vertex of ${\sf f}$ is contracted into $X_{\tau_j}$ and hence ${\sf t}_{\tau_j} = {\sf t}_{\tau_{j-1}}$. This proves the first part of the claim. See \Cref{fig:case2}.

\begin{figure}
    \centering
\includegraphics[width=0.5\linewidth]{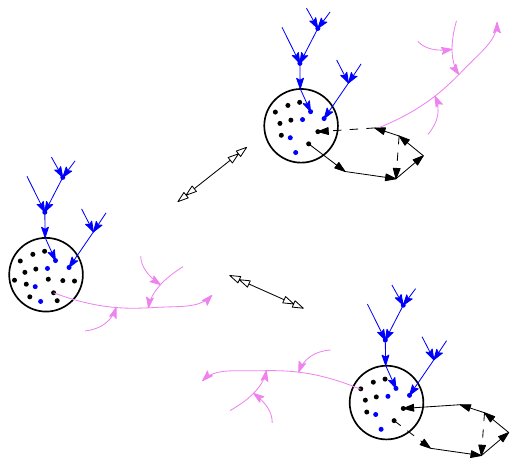}
    \caption{Proof of induction in \Cref{lem:no_bad_no_disturb}, $\tau_j > \tau_{j-1}+1$ case when the excursion starts at a non blue vertex.}
    \label{fig:case2}
\end{figure}

For the second part of the claim,
we now have two cases, the excursion starts either at a non-blue vertex or at a blue vertex.
Let us first consider the former case (see \Cref{fig:case2}), and recall that in this situation the coloring is unchanged from $\tau_{j-1}$ to $\tau_j$ by item b. of the construction of blue vertices.
Observe that the tail of the oriented edge of $M_{\tau_j}$ emanating from $X_{\tau_j}$ might or might not be a vertex in this excursion, but in either case, the second part of the claim is valid for $M_{\tau_{j-1}}$. Indeed, in this case, the edges we delete must be in the excursion and hence  cannot be in ${\sf t}$. Furthermore $M_{\tau_{j-1}}$ is obtained from $M_{\tau_j}$ by adding to it the undeleted edges in the excursion appropriately. Then the second part of the claim follows in the top right case of \Cref{fig:case2} because the excursion starts at a non blue vertex. For the bottom right case in \Cref{fig:case2}, the second item of the claim holds by induction hypothesis.

Now suppose the excursion starts at a blue vertex. 
By the induction hypothesis, the future of $X_{\tau_j} $ in the arborescence $M_{\tau_{j}}$ starts at a non blue vertex in $\Tip_{\tau_j}$. Therefore, in the first cycle uncontracted, the vertex with two outgoing edges is $X_{\tau_{j-1}}$.   Therefore, the first edge in this excursion is removed.
Thus the tail of the outgoing edge from $X_{\tau_{j-1}}$ of $M_{\tau_{j-1}}$ remains a non-blue vertex as desired. Thus the claim remains valid in this case also, see \Cref{fig:case3}.
\begin{figure}
    \centering
    \includegraphics[width=0.5\linewidth]{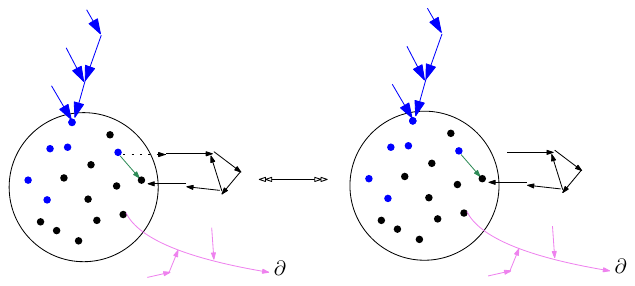}
    \caption{Left: In $X_{\tau_{j-1}}$, the blue vertex has the green outgoing edge. Then it goes out with the dotted arrow and form $X_{\tau_j}$. Right: After uncontraction, the dotted arrow is deleted.}
    \label{fig:case3}
\end{figure}

Now we move to stage 3 which is easier to handle.
    Observe that in stage 3, no cycle is  created with any edge in ${\sf t}$ and hence if $\mathfrak D$ does not occur, no edge is added whose head is in ${\sf f}$.
This completes the proof of the proposition.
\end{proof}
So the crucial proposition we need to prove is the following:
\begin{prop}\label{prop:bad_event_zero_prob}
$\P^{\sf t}(\mathfrak B \cup \mathfrak D) \to 0$ as $n \to \infty$.
\end{prop}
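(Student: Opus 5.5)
We bound $\P^{\sf t}(\mathfrak B)$ and $\P^{\sf t}(\mathfrak D)$ separately. Throughout we use \Cref{cor:cond_law}: a singleton tip chooses its minimum edge uniformly among edges \emph{not} hitting ${\sf f}$, so ${\sf f}$ can only be hit from a contracted tip (with $\tip>1$), where the edge is uniform over all of $\cO(z,t)$. Since $|{\sf f}|$ is tight by \Cref{prop:incoming_tree}, we work on $\{|{\sf f}|\le K\}$. We use $A_n=\log\log n$ and $\eps_n=A_n^{-1}$.

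\textbf{The giant in stage 2.} The first step is a quantitative description of stage 2. Before time $\eps_n n$, the walk has exposed $o(n)$ vertices, and ${\sf f}$ is hit from a non-singleton tip with probability at most $\sum_t \E[\tip_t\mathbf 1\{\tip_t>1\}]\,K/n$. We will show this sum is $o(1)$ over the window $t\le \eps_n n$, by first bounding $\E\tip_t \lesssim t^2/n$ (birthday-paradox growth) before the giant forms. In particular $\alpha_1>\eps_n n$ with high probability, or the giant is hit at most boundedly often. After time $\sqrt n A_n$, we prove that with high probability the tip $X_{\tau_j}$ is a giant containing all but $o(\cdot)$ of $\cN_{\tau_j}$. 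The excursions from the giant are approximately Geometric$(g/n)$ in length, and a non-giant contracted tip arises inside an excursion only when a small cycle closes. That event has probability $O(\ell^2/n)$ for an excursion of length $\ell$. Summing over excursions, we aim to show that the expected number of times $t\le\tauend$ at which the tip is contracted but not the giant is $o(n)$. Consequently the probability that ${\sf f}$ is hit during a non-immediate excursion, which is case a.\ of the coloring and makes everything blue, is $o(1)$.

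\textbf{Control of the blue set and $\mathfrak B$.} Off the case-a.\ event, blue vertices grow only through case c.: the excursion starts at a blue vertex. Given the contracted giant $X_{\tau_j}$ with $\tip=g$, its outgoing minimum edge is uniform over its outgoing edges in $\vec E_{\tau_j}$. Hence the probability that the excursion starts at a blue vertex is roughly $B_{\tau_j}/g$. We will then show a supermartingale-type bound: $\E[B_{\tau_{j+1}}-B_{\tau_j}\mid\cdot]\le (B_{\tau_j}/g)\,\E[\text{excursion length}]\approx B_{\tau_j}/g\cdot n/g$. Meanwhile $g$ grows by about $n/g$ per excursion. Writing $B/g$ as the tracked ratio, it grows multiplicatively by roughly $(1+n/g^2)$ per step, which is summable once $g\gg\sqrt n$. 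The initial value at $\alpha_1$ needs care, since at $\alpha_1$ all of $\cN_{\alpha_1}\setminus\Tip_{\alpha_1-1}$ turns blue. This set is small when $\alpha_1$ occurs from the giant, namely it is the part outside the giant. We will show it has size $o(g)$ with high probability, using the fact that with high probability the tip at $\alpha_1$ is the giant (from the previous step). Hence $B_\infty/\tip$ at the final excursion is $o(1)$ in probability. Since the final excursion's first edge is uniform from the giant, $\P^{\sf t}(\mathfrak B)\to0$. The delicate issue is matching the window where $g\lesssim\sqrt n A_n$ against the event $\alpha_1$ occurring early; this is handled by the first paragraph's estimate that ${\sf f}$ is unlikely to be hit before time $\eps_n n$.

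\textbf{Stage 3 and the main obstacle.} At the end of stage 2, $\tilde\cN_\tauend$ contains order $n$ original vertices with high probability. Each stage-3 step therefore hits it with probability bounded below, so each walk has tight, geometric length. Hitting ${\sf f}$ again requires a contracted tip, and contraction within a stage-3 walk of length $\ell$ has probability $O(\ell^2/n)$. Summing the quantity $\tip\cdot K/n$ over all $O(n)$ walks gives $\E[\#\text{hits of }{\sf f}]=O(K/n)\sum_{\text{walks}}\E[\ell^2\cdot\ell/n]=o(1)$, so $\P^{\sf t}(\mathfrak D)\to0$. I expect the main obstacle to be the stage-2 snowball analysis: rigorously showing that non-giant contracted tips occupy only $o(n)$ time, and that the blue fraction of the giant stays $o(1)$. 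Both require concentration for the giant size, which we would get via a comparison of $\tip$ at hit-past times with a Pólya-urn / coupon-type recursion $g_{j+1}\approx g_j+\mathrm{Geom}(g_j/n)$.
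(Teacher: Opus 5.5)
Your outline has the same architecture as the paper: $\alpha_1$ is late; the giant swallows the early vertices before $\alpha_1$; case a.\ is ruled out because non-giant contracted tips occupy only $o(n)$ steps; blue growth is controlled; the final excursion starts at a uniform vertex of the tip; and stage 3 is handled by short walks plus a union bound. However, several steps fail as written.

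\textbf{The first hitting estimate.} Your bound is off by a factor $\tip_t$. By \Cref{lem:hitting_prob}, from a contracted tip the conditional probability of hitting ${\sf f}$ is $|{\sf f}|/(n-\tip_t)$: the $\tip_t|{\sf f}|$ edges into ${\sf f}$ are normalised by all $\tip_t(n-\tip_t)$ available edges. With your bound the first paragraph does not close. Even granting $\E[\tip_t]\lesssim t^2/n$ on the whole window, you get $\sum_{t\le\eps_n n}(t^2/n)(K/n)\asymp K\eps_n^3 n\to\infty$. That bound is also the true order after the giant forms at time $\asymp\sqrt n\ll \eps_n n$, since a hit-past step then has probability $\asymp t/n$ and produces a tip of size $\asymp t$. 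So your argument only works for $t\ll n^{2/3}$. With the correct formula no birthday-paradox analysis is needed: a union bound over $2n\eps_n$ steps gives $\Pt(\alpha_1\le 2n\eps_n)=O(|{\sf f}|\eps_n)$, which is \Cref{lem:alpha1}.

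\textbf{The blue-growth step.} The series $\sum_j n/g_j^2$ is not summable. Since $g_{j+1}-g_j\approx n/g_j$, it is $\approx\log(g_{\mathrm{end}}/g_{\mathrm{start}})$. Hence $\prod_j(1+n/g_j^2)\approx g_{\mathrm{end}}/g_{\mathrm{start}}$, e.g.\ $\asymp\sqrt n/A_n$ if you start at $g=\sqrt n A_n$. That is the growth factor of $B$, not of $B/g$. What is true is a P\'olya-urn statement: off case a., $B_{\tau_j}/\tip_{\tau_j}$ is a supermartingale. Indeed $\E[(B+M\mathbf 1_{\Blue_j})/(g+M)\mid\cF_{\tau_j}]\le B/g$ provided the number $M$ of new vertices is independent of the tail choice. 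This would rescue your ``$o(g)$ initially'' estimate via a maximal inequality.

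The paper never tracks $g$. It uses only $\tip_{\tau_j}\ge N_{\alpha_1}-|{\sf f}|\ge n\eps_n/2-|{\sf f}|$ and at most $nA_n$ excursions, accepting a crude factor $\exp(O(A_n^3))$. It compensates with the much stronger initial bound $\cB_{\alpha_1}\subseteq\cN_{A_n^2}$: by \Cref{lem:hit_close_to_tree}, $\cN_{A_n^2}$ is swallowed at a hit-past time in $(n\eps_n,2n\eps_n)$, before $\alpha_1$. So the concentration of the giant that you flag as the main obstacle is not needed.

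\textbf{The final excursion.} Both that independence and your closing claim that ``the final excursion's first edge is uniform from the giant'' need an argument you do not give. $J_{\max}$ is not a stopping time, because whether an excursion is final depends on its future. You must show that conditioning on the future does not bias the tail. This is \Cref{lem:future_couple}: in the complete graph every vertex of $\Tip_{\tau_j}$ has the same set of available heads, so the rest of the trajectory can be coupled identically whichever tail is chosen.

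\textbf{Stage 3.} $N_{\tauend}$ is not of order $n$ with probability tending to one, since $\Pt(\tauend-\tauendone\le\delta n)\asymp\delta$. Work instead on $\{N_{\tauend}>n^{0.9}\}$ as in \Cref{lem:Ntauend}; your count then goes through.
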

In \Cref{sec:bad_event_B,sec:bad_event_D}, we show separately that the probability of the events $\mathfrak B, \mathfrak D$ tends to 0. Assuming \Cref{prop:bad_event_zero_prob}, we can  now complete the proof of \Cref{thm:main}
\begin{proof}[Proof of \Cref{thm:main}]
From \Cref{prop:bad_event_zero_prob,prop:incoming_tree}, we also obtain that the unconditional probability $\P(\mathfrak B \cup \mathfrak D) \to 0$ as $n \to \infty$.
    Fix $R>0$. Observe that for any ${\sf t}$, 
    \begin{multline*}
    \P(B_{(({\sf V}_n,M_n),1)}(R) \simeq {\sf t}) =  \P(B_{(\Upsilon_{n},1)}(R)\simeq {\sf t} , \mathfrak B^c \cap \mathfrak D^c)+o(1)\\ = \P(B_{(\Upsilon_{n},1)}(R)\simeq {\sf t}) +o(1) = \P(B_{(\vec {\sf T},1)} \simeq {\sf t})+o(1),
    \end{multline*}
    where for the first and second equalities we use \Cref{prop:bad_event_zero_prob} and bounded convergence theorem. For the last equality we used \Cref{prop:incoming_tree}. This completes the proof.
\end{proof}
 \section{Bad events do not occur in stage 2}\label{sec:bad_event_B}

   In this section, our goal is to prove the following proposition. Recall the definition of $\Pt$ from \eqref{eq:Pt}.
   \begin{prop}\label{prop:bad_B}
      $\Pt(\mathfrak B) \to 0$ as $n \to \infty$.
   \end{prop}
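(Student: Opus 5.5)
We split $\mathfrak B$ according to whether ${\sf f}$ is ever hit, and then track two quantities along stage 2: the tip size $\tip_k$ and the number of blue vertices $B_k$. If $\alpha_1=\infty$, then $\cB_\infty={\sf f}$. Moreover ${\sf f}$ is never hit, so $\cN_\tauend\cap{\sf f}$ consists only of the vertices of ${\sf f}$ that had already been exposed. Hence the final excursion cannot start in ${\sf f}$ at all, because the walk never visits ${\sf f}$. So $\mathfrak B\subseteq\{\alpha_1<\infty\}\cap\{\text{final excursion starts in }\cB_\infty\}$.

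The strategy is to show that, with $\Pt$-probability tending to one, three things hold:
\begin{itemize}
\item[(i)] the walk is ``snowballed'' by the time ${\sf f}$ is first hit;
\item[(ii)] the blue set stays of size $o(n)$ compared with $\cN_\tauend$, in an appropriate sense;
\item[(iii)] conditionally on the history at each $\tau_j$, the next excursion starts at a uniformly chosen vertex of $\Tip_{\tau_j}$.
\end{itemize}

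For (iii), we use \Cref{cor:cond_law}. At a time $\tau_j$ the tip $X_{\tau_j}$ is a contracted vertex. Its outgoing edges in $\vec E_{\tau_j}$ are, by part c., uniformly distributed, with $\cO(X_{\tau_j},\tau_j)$ indexed by (tail in $\Tip_{\tau_j}$, head outside). So the tail in ${\sf V}$ of the next exposed edge is a vertex of $\Tip_{\tau_j}$ chosen with probability proportional to $n-\tip_{\tau_j}$, i.e.\ uniformly. Hence
\[
\Pt(\text{excursion after }\tau_j\text{ starts blue}\mid\cE_{\tau_j})\le B_{\tau_j}/\tip_{\tau_j},
\]
and the same bound applies to the final excursion, which begins at some $\tau_j$. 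Since $\tip_{\tau_j}$ is nondecreasing in $j$ after $\alpha_1$ (all exposed vertices are in $X_{\tau_j}$), it suffices to show $\Pt$-w.h.p.\ that $B_\infty/\tip_{\tau_{J_{\max}}}\to 0$. This needs no union bound, since it is a single statement about the final start.

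The estimates come in the following order. First, $\alpha_1$ occurs only after time $\gg \sqrt n$ w.h.p.: every step hits the fixed finite set ${\sf f}$ with probability at most $|{\sf f}|\tip_k/n$, and by \Cref{cor:cond_law}b singletons cannot hit ${\sf f}$ at all. Second, we need a birthday/snowball lemma. For $k\ge \eps_n^{-1}\sqrt n$ (with $\eps_n=A_n^{-1}$), the tip at any hit-past time contains all of $\cN_k$ and $\tip_{\tau_j}\ge c\,N_{\tau_j}$, while $N_{\alpha_1}$ is large. Here we compare the excursions from a giant of size $g$ with Geometric$(g/n)$ lengths. Third, we bound growth of the blue set. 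At $\alpha_1$, $B_{\alpha_1}\le N_{\alpha_1}$, but we need it to be small relative to later tips. For this we use that the total number of vertices exposed after $\alpha_1$ up to $\tauend$ is of order $n$, whereas an excursion starting blue happens with probability $B/\tip$ and adds a Geometric$(\tip/n)$ number of vertices. Thus $\E[\Delta B\mid\cE]\lesssim (B/\tip)(n/\tip)$, while $\tip$ itself grows by about $n/\tip$ per excursion. This gives a supermartingale-type comparison showing that $B_k/\tip_k$ stays of order $B_{\alpha_1}/\tip_{\alpha_1}$, which must be shown to be $o(1)$. Fourth, the catastrophic case a.\ (an excursion that is not immediate but ends in ${\sf f}$) requires a step from a non-tip contracted vertex into ${\sf f}$. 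Following the outline, the number of steps where the explored vertex is contracted but not the giant is $o(n)$ in expectation, so by a union bound this case has probability $O(|{\sf f}|\cdot o(n)/n)\to0$.

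The main obstacle is the initial ratio $B_{\alpha_1}/\tip_{\alpha_1}$. At the moment ${\sf f}$ is first hit, everything except $\Tip_{\alpha_1-1}$ turns blue. This is harmless only if the tip already carried almost all the mass, i.e.\ $N_{\alpha_1}-\tip_{\alpha_1-1}=o(N_{\alpha_1})$. My first attempt is to show that by time $\alpha_1$ (which is of order $n$) the non-giant exposed mass is $O(\sqrt n\,A_n)$ while the giant is of order $n$. This uses the snowball estimate: each excursion leaving the giant has length Geometric$(g/n)$, and returning to it swallows everything. Also, hitting ${\sf f}$ from a singleton is impossible, so $\alpha_1$ is typically a step from the giant itself, giving $\Tip_{\alpha_1-1}\approx\cN_{\alpha_1-1}$. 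Making these snowball bounds quantitative, uniformly over the random excursion structure, is where I expect most of the work to lie.
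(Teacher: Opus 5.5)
Your architecture matches the paper's. Each excursion starts uniformly on the tip, the catastrophic case is excluded because ${\sf f}$ can only be hit from a contracted tip, and the blue mass is compared with $\tip_{\tau_{J_{\max}}}$ at the end. Treating the blue fraction of the tip as a P\'olya-urn supermartingale is a legitimate alternative to the paper's cruder bound $\Et(\tilde B_j\mid\cF_{\alpha_1})\le\tilde B_{j_1}(1+n/(N_{\alpha_1}-|{\sf f}|)^2)^{j-j_1}$.

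\textbf{Gap 1: the final excursion.} The step ``the same bound applies to the final excursion \ldots\ this needs no union bound'' is not justified. $J_{\max}$ is not a stopping time: whether $\tau_j$ is the last hit-past time depends on the whole remaining trajectory. So the uniformity of the tail at step $\tau_j+1$ given $\cF_{\tau_j}$ says nothing by itself about its law given $\{J_{\max}=j\}$. A union bound over excursions is not affordable either. Each excursion enlarges the tip by about $n/\tip$, so $\sum_j 1/\tip_{\tau_j}$ over the excursions after $\alpha_1$ is of order one. Hence summing $B_{\tau_j}/\tip_{\tau_j}$ over excursions gives something of order at least one, not $o(1)$.

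The missing ingredient is that, given $\cF_{\tau_j}$ and the tail $u\in\Tip_{\tau_j}$ of the next edge, the law of the rest of the trajectory does not depend on $u$. In the complete graph the heads can be coupled, after which everything coincides (\Cref{lem:future_couple}). Bayes' rule then shows the final excursion starts uniformly on $\Tip_{\tau_{J_{\max}}}$. The same symmetry is what makes the excursion length independent of the colour of its start, which your supermartingale needs. Because you then evaluate it at the non-stopping index $J_{\max}$, you also need a maximal inequality (e.g.\ Doob's), not just ``stays of order''. The paper sidesteps this by bounding the monotone quantity $B_\infty$ and a lower bound on $\tip_{\tau_{J_{\max}}}$ separately.

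\textbf{Gap 2: the initial ratio.} You correctly identify $B_{\alpha_1}/\tip_{\alpha_1}$ as the crux, but you leave it unproved. No control ``uniformly over the random excursion structure'' is needed. The missing observation is monotonicity: at hit-past times, the set $\cN_{\tau_j}\setminus\Tip_{\tau_j}$ of exposed vertices outside the tip can only shrink. A hit-past step lands either in the tip or on some exposed vertex outside it, and in the latter case everything between that vertex and the tip is swallowed. So it suffices to control this set once.
\begin{itemize}
\item The birthday bound at $\tau_1$ gives size $O(\sqrt n A_n)$.
\item The paper instead uses $\tau_I$, the first hit-past time after $n\eps_n$ landing in $\cN_{A_n^2}$, which occurs before $2n\eps_n$ with high probability. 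This gives containment in $\cN_{A_n^2}$.
\end{itemize}
Combine either bound with $\alpha_1-1\in\hitpast$, which is your catastrophic-case estimate and the paper's \Cref{lem:hk}. This gives $B_{\alpha_1}\le A_n^2$ (respectively $|{\sf f}|+O(\sqrt n A_n)$), while $N_{\alpha_1}\ge n\eps_n/2$.

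\textbf{A smaller error.} Your per-step bound $|{\sf f}|\tip_k/n$ for hitting ${\sf f}$ is wrong. The contracted tip has $\tip_k(n-\tip_k)$ available outgoing edges, so the correct probability is $|{\sf f}|/(n-\tip_k)$ (\Cref{lem:hitting_prob}). Your version, summed over steps, is useless once the tip reaches size of order $\sqrt n$. The correct one gives $\Pt(\alpha_1\le\delta n)=O(|{\sf f}|\delta)$, which is exactly what you need for $\tip_{\alpha_1}$ to be of order $n$.
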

   The idea is to bound the growth of blue vertices so that the probability of picking a blue vertex to start the final excursion is $o(1)$.
In the next subsection we will prove some preliminary estimates, which we shall subsequently use to establish a bound on the growth of blue vertices.

\subsection{Preliminary estimates}\label{sec:preliminary}

   Let $\cF_k$ be the sigma algebra generated by the local CLEB process at any step $k>\tauendone$. Equivalently, this is the sigma algebra generated by the sequence of exposed edges $(S_{\tauendone}+1,\ldots, S_k)$. We start with a lemma involving hitting probabilities which will be used repeatedly in what follows.
\begin{lemma}\label{lem:hitting_prob}
  Fix $k>\tauendone$ and let $A \subset V_k \setminus X_k$. Let $H_A$ be the event that the local CLEB process hits $\cont(A)$ in step $k+1$. Then 
  $$
  \Pt(H_A|\cF_k)=\frac{|\cont(A )\setminus {\sf f}|}{n-|{\sf f}|-1}\mathds 1_{\tip_k =1} + \frac{|\cont(A)|}{n-\tip_k}\mathds 1_{\tip_k>1}
  $$
 almost surely. 
\end{lemma}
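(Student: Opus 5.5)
The plan is to read the formula off \Cref{cor:cond_law}, splitting into two cases according to whether the tip $X_k$ is a singleton ($\tip_k=1$) or a contracted vertex ($\tip_k>1$). In step $k+1$ the local CLEB process explores the vertex $z=X_k$; the event $H_A$ is exactly the event that $\vec e_{\min}(z,k)$ has head in $\cont(A)$. Since \Cref{cor:cond_law} gives the conditional law of $\vec e_{\min}(z,k)$ given $\mathcal{IT}_n$ and $\cE_k$, and $\cF_k$ together with the stage 1 conditioning is (up to the weights of exposed edges, which the corollary also conditions on) the same information, it suffices to compute the conditional probability given $\cE_k$. The tower property then passes to $\cF_k$.

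First I will count the outgoing edges from $z$ in $\vec E_k$. In the fully oriented complete graph, contraction keeps exactly one edge of $\vec E_k$ for each pair $(u,w)$ with $u\in\cont(z)$ and $w\in{\sf V}\setminus\cont(z)$. So $\cO(z,k)$ is in bijection with such pairs, with the head read in ${\sf V}$ (as in \Cref{rmk:edge_ambiguity}). For the singleton case $z\in{\sf V}$, the edge $\vec e_{\min}(z,k)$ is unexposed: $z$ was just reached and has no exposed outgoing edge. By part b of \Cref{cor:cond_law} it is uniform on $\cO(z,k)\setminus\cO(z,k,{\sf f})$. This set consists of the $n-1-|{\sf f}|$ edges from $z$ to ${\sf V}\setminus({\sf f}\cup\{z\})$, using that $z\notin{\sf f}$. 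That fact holds for $z=\rho_R$ because $\mathcal{IT}_n$ includes $\rho_R\notin{\sf f}$, and for later singletons because a singleton tip is a fresh vertex, which cannot lie in ${\sf f}$. Hence the probability is $|\cont(A)\setminus{\sf f}|/(n-|{\sf f}|-1)$, since $A\not\ni X_k$ gives $\cont(A)\cap\{z\}=\emptyset$.

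For the contracted case $z\notin{\sf V}$, part c of \Cref{cor:cond_law} says $\vec e_{\min}(z,k)$ is uniform on $\cO(z,k)$, with no restriction coming from ${\sf f}$. Here I must check that $\cO(z,k)$ has no exposed edges, which holds because $z$ was just created and has no outgoing edge in $S_k\cap\vec E_k$. The head in ${\sf V}$ of a uniform edge is then uniform over ${\sf V}\setminus\cont(z)$ counted with multiplicity $\tip_k$ per target vertex. The probability of hitting $\cont(A)$ is therefore $\tip_k|\cont(A)|/(\tip_k(n-\tip_k))=|\cont(A)|/(n-\tip_k)$, using that $\cont(A)\cap\Tip_k=\emptyset$ because contents of distinct vertices of $V_k$ are disjoint.

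The main obstacle is bookkeeping rather than probability. I need to confirm that the tip's outgoing edge set is exactly as counted, meaning no edges were lost to earlier contractions except the internal ones, and that the multiplicity is uniform. I also need to justify that the uniformity in \Cref{cor:cond_law} is uniform over edges, and not over heads in $V_k$. I will handle this by tracking the bijection between $\vec E_k$ and the edges of $\vec{\sf E}_n$ with endpoints in different contents, established in the description of $\Psic$.
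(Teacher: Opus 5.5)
Your proposal is correct and follows the same route as the paper: split on $\tip_k=1$ versus $\tip_k>1$, invoke parts b.\ and c.\ of \Cref{cor:cond_law}, and count the favourable edges. In the contracted case that is $\tip_k|\cont(A)|$ among the $\tip_k(n-\tip_k)$ outgoing edges of the tip; in the singleton case it is $|\cont(A)\setminus{\sf f}|$ among $n-1-|{\sf f}|$. Your extra checks are details the paper leaves implicit: that a singleton tip never lies in ${\sf f}$, that a freshly contracted tip has no exposed outgoing edge, and that the tower property passes the result from $\cE_k$ to $\cF_k$.
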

\begin{proof}
    It's a simple consequence of the fact that in stages 2 and 3, the local CLEB process can't hit $\sf f$ in step $k+1$  if in step $k$ a previously unexposed vertex is hit, which occurs if and only if $\tip_k=1$.
    This is a consequence of parts b. and c. of \Cref{cor:cond_law}. 
    
    Now observe that the number of outgoing edges from $X_k$ which hits any set $A$ is $\tip_k |\cont(A)|$. The number of outgoing edges from $X_k$ whose head is not in $X_k$ is $ \tip_k (n-\tip_k)$. If $\tip_k=1$, the number of outgoing edges which hits $A$ but not $|{\sf f}|$ is $|\cont(A) \setminus {\sf f}|$ and the total number of outgoing edges which the CLEB walk can take is $n-1-|{\sf f}|$. Therefore,
\begin{align*}
\Pt(H_A|\cF_k)\mathds 1_{\tip_k=1}& =\frac{|\cont(A)\setminus {\sf f}|}{n-|{\sf f}|-1}\mathds 1_{\tip_k=1}, \\ \Pt(H_A|\cF_k)\mathds 1_{\tip_k>1} &=\frac{|\cont(A)|}{(n-\tip_k)}\mathds 1_{\tip_k>1},
\end{align*}
where both the equalities hold almost surely.
\end{proof}
For $j \ge 1$, recall that $\cN_j$ is the set of vertices exposed up to step $j$ and $N_j = |\cN_j|$.
We show that $N_j$ grows linearly in $j$ with very high probability. Most of our estimates are needed much before $O(n)$ steps are concluded, but we also need to reach a timescale which is quite close to $n$. It turns out that analyzing the process up to time $ n/(\log \log n)$ is enough for our purposes (though it is not optimal for the following estimates to hold, simply a convenient choice). To that end, define
\begin{equation}
    A_n = \log (\log (n)) \text{ and }\eps_n  = A_n^{-1} \label{eq:epsn}
\end{equation}
\begin{lemma}\label{lem:Nk_lower}
Take a sequence $\delta_n \to 0$. For every $k\le \delta_n n $, $N_{k } \le k +1$ and
$$
\Pt( N_k \le k/2, k \le \tauend) \le \left(\frac{2ek}{n} \right)^{k/2} \le (0.5)^{\delta_n n},
$$
for all large enough $n$.
\end{lemma}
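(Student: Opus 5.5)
The deterministic bound $N_k \le k+1$ is immediate, since each step exposes exactly one edge and so adds at most one new head to $\cN_k$. For the probabilistic bound, the plan is to count the steps that fail to produce a new vertex. The claim is that $N_k$ increases by one in step $j$ exactly when the exposed edge lands on a previously unexposed vertex. Otherwise the step hits $\cN_{j-1}$, which for $j \le \tauend$ means it hits the past, ${\sf f}$, or $\partial$ only at the very last step. So on $\{N_k \le k/2, k\le \tauend\}$, at least $k/2 - O(1)$ of the first $k$ steps (counted after $\tauendone$, together with the stage 1 steps already accounted for) must be \emph{failure steps}, where the exposed edge hits $\cont(\cN_{j-1})$.

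The next step is to bound the conditional probability of a failure at step $j+1$ given $\cF_j$ using \Cref{lem:hitting_prob} with $A$ equal to the set of vertices of $G_j$ whose $\cont$ lies in $\cN_j\setminus \Tip_j$ (together with $\partial$ and ${\sf f}$). This gives a bound of order $|\cont(\cN_j)|/(n-\tip_j)$, up to the $|{\sf f}|$ correction. Since $|\cont(\cN_j)| \le N_j \le j+1\le k+1$ and $\tip_j\le N_j \le k+1 \le \delta_n n +1$, every conditional failure probability is at most $p := 2k/n$ for large $n$; the factor $2$ absorbs the $(1-\delta_n)^{-1}$ and $|{\sf f}|$ corrections, since ${\sf f}$ is fixed under $\Pt$.

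The failure indicators are therefore stochastically dominated, step by step, by i.i.d.\ Bernoulli$(p)$ variables, via a standard martingale or coupling argument. Hence
\[
\Pt(N_k\le k/2,\,k\le\tauend)\le \P(\mathrm{Bin}(k,p)\ge k/2)\le \binom{k}{k/2}p^{k/2}\le \Big(\frac{2ek}{k}\Big)^{k/2}\Big(\frac{k}{n}\Big)^{k/2}\cdot(\text{const})^{k/2},
\]
which, after using $\binom{k}{m}\le (ek/m)^m$ with $m=k/2$, gives $(2e k/n)^{k/2}$ up to the constant in $p$. I would tune $p$ to be $k/n$ (using $N_j \le k+1$ and $n-\tip_j\ge n/2$ loosely, or more carefully) so that the constant matches exactly. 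Finally, $2ek/n\le 2e\delta_n \le 1/4$ for large $n$. As written, the last inequality $(2ek/n)^{k/2}\le (0.5)^{\delta_n n}$ holds only when $k$ is comparable to $\delta_n n$; for smaller $k$ I would read it as the crude bound $(1/4)^{k/2}=(1/2)^k$, i.e.\ the intended uniformity is that the bound is at most $2^{-k}$.

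The main obstacle is the bookkeeping needed to make the domination rigorous. The event ``step is a failure'' must be measurable with respect to the CLEB step, and \Cref{lem:hitting_prob} must apply to the right set $A$ at each step, including steps right after a contraction, where $\tip_j>1$ and ${\sf f}$ can be hit. Steps that hit $\partial$ need separate treatment, though on $k\le\tauend$ this happens at most once. The cleanest route is to define the failure indicator $\xi_{j+1}=\mathds 1\{\text{step } j+1 \text{ hits } \cont(V_j\setminus\{X_j\})\cap(\cN_j\cup{\sf f}\cup\{\partial\})\}$, bound $\Pt(\xi_{j+1}=1\mid\cF_j)\le p$ on $\{N_j\le k+1\}$, and then apply the exponential-moment bound $\E\prod_j(1+(e^\lambda-1)\xi_j)$ iteratively via the tower property.
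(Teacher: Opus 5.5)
Your proposal is correct and follows essentially the paper's argument. You write $N_k$ as $k+1$ minus the number of stage-2 steps landing on already exposed vertices, bound each such step's conditional probability by $O(k/n)$ via \Cref{lem:hitting_prob}, and dominate by independent Bernoulli variables. You then finish with a Chernoff-type bound; your union bound over $k/2$-subsets gives the same $(2ek/n)^{k/2}$ form up to the constant in $p$.

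Your caveat on the last inequality is also right. Since $\frac{k}{2}\log\frac{2ek}{n}$ is decreasing for $k<n/(2e^2)$, the paper's appeal to monotonicity in $k$ only gives $(0.5)^{\delta_n n}$ for $k$ of order $\delta_n n$. For smaller $k$ the correct uniform bound is your $(2ek/n)^{k/2}\le 2^{-k}$. That weaker bound still suffices for the later uses, such as $k=A_n^2$ in \Cref{lem:hit_close_to_tree}.
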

\begin{proof}
    The upper bound is deterministic and trivial as in each step of the local CLEB process, at most one vertex is added. For $k \in (\tauendone, \tauend]$, let $\cA_k$ be the event such that in step $k$, the CLEB walk hits $\cN_{k-1}$. Also because of the conditioning, $N_{\tauendone}  = \tauendone +1 = |{\sf f}|+1$. Therefore
\begin{equation}
N_{k \wedge \tauend}=k\wedge \tauend-\sum_{j=\tauendone +1}^{k\wedge \tauend} \mathds 1_{\cA_j}. 
\label{eq:Bj1}
\end{equation}
Note that using \Cref{lem:hitting_prob},
$$\Pt(\cA_j|\cF_{j-1})\mathds 1_{j\le\tauend}\le\frac{N_{j-1}}{n-\tip_{j-1} - |{\sf f}|}\le\frac{j}{n-j-|{\sf f}|},$$
almost surely. Consider the sequence of independent random variables $(\xi_j)_{j\ge 1}$, such that $\xi_j\sim\text{Bernoulli}(j/(n-j - |{\sf f}|))$. Because of the upper bound in \eqref{eq:Bj1}, we can stochastically dominate $\sum_{j=\tauendone +1}^{k \wedge \tauend} \mathds 1_{\cA_j}$ by $\sum_{1 \le j \le k}\xi_j$. Observe that for all $k \le \delta_n n$ and $n $ large enough,
$$
\sum_{j=1}^k\mu_j \le \frac{k(k+1)}{n},
$$
where $\mu_j:=\E(\xi_j)=j/(n-j-|{\sf f}|)$, since $\delta_n \to 0$.
By Chernoff bound,
$$
\Pt\left(\sum_{1 \le j \le k}\xi_j \ge k/2\right) \le  \left(\frac{2e(k+1)}{n} \right)^{k/2} \le (4e\delta_n)^{\delta_n n/2},
$$
where the last inequality follows from the monotonicity in $k$ as long as $k $ is much smaller than $n$.
This completes the proof since $\delta_n \to 0$.
\end{proof}

Now we upper bound the total number of steps taken in the stage $2$ of the local CLEB process. The next two lemmas show that this quantity is highly concentrated around $n$.
\begin{lemma}\label{lem:total_steps}
For all $k \ge 0$,
$$
\Pt(\tauend -\tauendone > k) \le \left(1-\frac1n\right)^{k}.
$$
In particular, 
$$
\Et(\tauend -\tauendone) \le n.
$$   
\end{lemma}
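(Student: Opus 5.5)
We will show that at every step of stage 2 the CLEB walk hits $\partial$ with conditional probability at least $1/n$, whatever the past. Iterating this one-step bound gives the geometric tail, and summing the tail gives the expectation bound.

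\textbf{One-step bound.} Fix $k$ with $\tauendone \le k < \tauend$ and work on the event $\{\tauend > k\}$, which is $\cF_k$-measurable. The vertex explored in step $k+1$ is the tip $X_k$. We apply \Cref{lem:hitting_prob} with $A=\{\partial\}$. This is legitimate because $\partial \in V_k\setminus X_k$: the boundary is never contracted and has not yet been hit. Since $\cont(\partial)=\{\partial\}$ and $\partial\notin{\sf f}$ (the set ${\sf f}$ consists of vertices whose minimum outgoing edges lead into $\cN_R$, and $\partial$ has no outgoing edge), \Cref{lem:hitting_prob} gives:
\begin{itemize}
\item if $\tip_k=1$, the conditional probability of hitting $\partial$ in step $k+1$ is $1/(n-|{\sf f}|-1)\ge 1/n$;
\item if $\tip_k>1$, it is $1/(n-\tip_k)\ge 1/n$.
\end{itemize}
Hence, almost surely,
$$\Pt(\tauend = k+1 \mid \cF_k)\,\mathds 1_{\tauend>k}\ \ge\ \tfrac1n\,\mathds 1_{\tauend>k}.$$

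\textbf{Iteration and expectation.} Taking conditional expectations step by step gives
$$\Pt(\tauend>k+1)\le \left(1-\tfrac1n\right)\Pt(\tauend>k).$$
Starting from time $\tauendone$ and inducting yields $\Pt(\tauend-\tauendone>k)\le (1-1/n)^k$. Here $\tauendone$ is deterministic under $\Pt$, since $|{\sf f}|$ is fixed by the conditioning. For the expectation,
$$\Et(\tauend-\tauendone)=\sum_{k\ge0}\Pt(\tauend-\tauendone>k)\le \sum_{k\ge 0}\left(1-\tfrac1n\right)^k = n.$$

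\textbf{Main obstacle.} The only delicate point is justifying that \Cref{lem:hitting_prob} applies at every step, including right after a cycle contraction, and that $\partial$ is never excluded by the ${\sf f}$-restriction. Both facts come from \Cref{cor:cond_law}. In particular, stopping rule (c) of the CLEB walk does not interfere: in stage 2 every non-$\partial$ previously exposed vertex that is hit triggers a contraction, so the walk ends only when $\partial$ is hit.
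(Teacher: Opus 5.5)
Your proposal is correct and follows the paper's proof: a one-step lower bound of $1/n$ on the probability of hitting $\partial$, obtained from \Cref{lem:hitting_prob}, gives geometric domination of $\tauend-\tauendone$, and summing the tail gives the bound on the expectation. You treat the case $\tip_k=1$ separately, where the probability is $1/(n-|{\sf f}|-1)$; this is slightly more careful than the paper, which writes $1/(n-\tip_k)$ in both cases, but the bound $\ge 1/n$ holds either way.
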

\begin{proof}
Using \Cref{lem:hitting_prob},
$$\Pt(\text{the CLEB walk hits $\partial$ in $(k+1)$-th step} | \cF_k)\mathds 1_{\tauend > k}=\frac{1}{n-\tip_k}\geq\frac{1}{n},$$
for all $k\geq\tauendone$, almost surely. Thus we see that $(\tauend -\tauendone)$ is stochastically dominated by Geometric$(1/n)$. Also 
$$
\Et(\tauend -\tauendone ) \le \sum_{j \ge 0}  \left(1-\frac1n\right)^{j} = n,
$$
which completes the proof.
\end{proof}
We now prove a lower bound for $\tauend$.
\begin{lemma}\label{lem:tauend_lower}
For any sequence $\delta_n \to 0$, we have
$$
\Pt(\tauend-\tauendone\le \delta_n n) =O(\delta_n).
$$ 

\end{lemma}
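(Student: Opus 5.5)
The plan is a union bound over the first $\delta_n n$ steps of stage 2. The walk can only terminate at step $k+1$ by exposing an edge whose head is $\partial$. By \Cref{lem:hitting_prob} applied with $A=\{\partial\}$ (note $\partial\notin{\sf f}$ and $\cont(\partial)=\{\partial\}$), the conditional probability of this, given $\cF_k$ on the event $\{\tauend>k\}$, is
\[
\frac{1}{n-|{\sf f}|-1}\mathds 1_{\tip_k=1}+\frac{1}{n-\tip_k}\mathds 1_{\tip_k>1}.
\]
The first term is at most $2/n$ for large $n$, since $|{\sf f}|$ is fixed under $\Pt$.

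The only issue is the second term, which can be large if $\tip_k$ is comparable to $n$. Up to time $\tauendone+\delta_n n$, however, we have deterministically $\tip_k\le N_k\le k+1\le |{\sf f}|+2+\delta_n n$ by the trivial bound of \Cref{lem:Nk_lower}. Hence $n-\tip_k\ge n/2$ for $n$ large, because $\delta_n\to0$, and so the conditional hitting probability is at most $2/n$ uniformly in this window.

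Summing, we would write
\[
\Pt(\tauend-\tauendone\le \delta_n n)=\sum_{k=\tauendone}^{\tauendone+\lfloor\delta_n n\rfloor-1}\Pt(\tauend=k+1)\le \sum_{k}\Et\big[\Pt(\text{hit }\partial\text{ at }k+1\mid\cF_k)\mathds 1_{\tauend>k}\big]\le \delta_n n\cdot\frac{2}{n}=2\delta_n .
\]
This gives the claim $O(\delta_n)$. Equivalently, we could couple $\tauend-\tauendone$ to stochastically dominate a Geometric$(2/n)$ variable on this window.

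There is no real obstacle here. The only point to check is that the hitting probability from a contracted tip, namely $1/(n-\tip_k)$, stays $O(1/n)$, and this is exactly what the deterministic bound $N_k\le k+1$ supplies. We also need that $\partial$ is never in ${\sf f}$, so that the $\tip_k=1$ case does not exclude it. This holds because ${\sf f}$ consists of vertices whose minimum outgoing edges lead into $\cN_R$, and $\partial$ has no outgoing edges.
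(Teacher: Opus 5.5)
Your proposal is correct and is essentially the paper's proof: a union bound over the first $\delta_n n$ steps of stage 2. It applies \Cref{lem:hitting_prob} with $A=\{\partial\}$, and the deterministic bound $\tip_k\le N_k\le k+1$ keeps the per-step probability of hitting $\partial$ at $O(1/n)$, for a total of at most $2\delta_n$. The only difference is that you treat the $\tip_k=1$ case separately and check $\partial\notin{\sf f}$, whereas the paper folds both cases into the single bound $1/(n-\delta_n n-1)$.
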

\begin{proof}
    Using \Cref{lem:hitting_prob},  for every $k \le \delta_n n+\tauendone$, 
   $$
   \Pt(\text{hit $\partial$ in step $k+1$ of the CLEB walk}| \cF_k)\mathds1_{\tauend>k} \le \frac{1}{n-k-1} \le \frac{1}{n-\delta_n n - 1},
   $$
   almost surely (since, trivially, $\tip_k \le N_k \le k+1$). So by union bound, the probability that $\tauend - \tauendone \le \delta_n n$ is at most 
   $$
   \frac{\delta_n n}{n-\delta_n n - 1} \le 2\delta_n,
   $$
   for all large enough $n$.
\end{proof}

Now we show that it takes at least $O(n)$ steps to hit the incoming tree ${\sf f}$.  

\begin{lemma}\label{lem:alpha1}
We have
$$
    \Pt(\alpha_1 \le 2n\eps_n) =O(|{\sf f}|\eps_n), 
    $$    
    for all large enough $n$. 
\end{lemma}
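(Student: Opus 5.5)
We bound, step by step, the conditional probability that the local CLEB walk hits ${\sf f}$, and then take a union bound over the first $2n\eps_n$ steps of stage 2. Since $\alpha_1\ge \tauendone$, the event $\{\alpha_1\le 2n\eps_n\}$ is contained in the event that ${\sf f}$ is hit at some step $k+1$ with $\tauendone\le k<2n\eps_n$ and $k<\tauend$. By \Cref{lem:hitting_prob} applied with $\cont(A)={\sf f}$, this conditional probability is zero when $\tip_k=1$. When $\tip_k>1$ it equals $|{\sf f}|/(n-\tip_k)$. For $k\le 2n\eps_n+\tauendone$ we have $\tip_k\le N_k\le k+1\le n/2$ for large $n$, so this is at most $2|{\sf f}|/n$. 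Hence
\begin{equation*}
\Pt(\alpha_1\le 2n\eps_n)\le \frac{2|{\sf f}|}{n}\,\Et\Big[\#\{k\in[\tauendone, 2n\eps_n\wedge \tauend): \tip_k>1\}\Big].
\end{equation*}
The whole problem therefore reduces to showing that the expected number of steps $k\le 2n\eps_n$ at which the tip is a contracted vertex is $O(n\eps_n)$. That bound gives $O(|{\sf f}|\eps_n)$.

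The trivial bound on this count is $2n\eps_n$, since it cannot exceed the number of steps considered. Plugging it in gives $\Pt(\alpha_1\le 2n\eps_n)\le 4|{\sf f}|\eps_n$, which is exactly the claimed $O(|{\sf f}|\eps_n)$. So the estimate follows directly by the union bound and no finer control of the snowball effect is needed here. One only has to be careful with a few details. First, the indexing of $\alpha_1$ relative to $\tauendone$: $\tauendone$ is $O(1)$ in distribution and fixed under $\Pt$, so shifting the time window by $\tauendone$ costs nothing. Second, the denominator $n-\tip_k$ should stay at least $n/2$ on the relevant range. Third, we must stop the count at $\tauend$, after which stage 2 is over.

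The only step needing care is therefore verifying that \Cref{lem:hitting_prob} applies uniformly up to time $2n\eps_n$. In particular we need that the conditional law of the walk is still given by \Cref{cor:cond_law} on the event $k<\tauend$. We also need that hitting ${\sf f}$ from a singleton tip has probability exactly zero, because the minimum weight outgoing edge of an unexposed vertex avoids ${\sf f}$ by the conditioning $\mathcal{IT}_n$. Once these are in place, summing the bound $2|{\sf f}|/n$ over at most $2n\eps_n$ steps concludes the proof.
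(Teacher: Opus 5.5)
Your proposal is correct and matches the paper's proof. Both use a union bound over the at most $2n\eps_n$ steps, with \Cref{lem:hitting_prob} bounding the per-step probability of hitting ${\sf f}$ by $|{\sf f}|/(n-\tip_k)\le 2|{\sf f}|/n$ because $\tip_k\le N_k\le k+1$, which gives $4|{\sf f}|\eps_n$; your detour through counting steps with $\tip_k>1$ is harmless but unnecessary, as you note yourself.
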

 \begin{proof}
     Suppose $H_{\sf f}$  is the event that ${\sf f}$ is hit in step $k+1$.
   Using \Cref{lem:hitting_prob},  for all $\tauendone\le k<n\eps_n$
    $$
    \Pt(H_{\sf f}|\cF_{k}) \le  \frac{|\sf f|}{n-\tip_k} \le \frac{|\sf f|}{n-2n\eps_n},
    $$
    almost surely, since $\tip_k\le N_k\le k+1\le 2n\eps_n$.
    Therefore, by union bound,
    $$
    \Pt(\alpha_1 \le 2n\eps_n) \le \frac{2n\eps_n|{\sf f}|}{n-2n\eps_n}  \le 4|{\sf f}|\eps_n, 
    $$
    for all large enough choices of $n$.
    This completes the proof.
 \end{proof}

Next, we show that although by time $o(n)$, the chance to hit ${\sf f}$
 is small, the CLEB walk does hit vertices close to ${\sf f}$ with high probability. Recall $\eps_n^{-1} =A_n$.
\begin{lemma}\label{lem:hit_close_to_tree}
 For all $n \ge 1$,
 \begin{align*}
  \Pt(\text{a vertex in  $\cN_{ A_n^2}$ is hit in steps  $(n\eps_n,2n\eps_n)$})  \ge 1-O(|{\sf f}|\eps_n).
 \end{align*}
 \end{lemma}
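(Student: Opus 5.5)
The event we need is that, at some step $k\in(n\eps_n,2n\eps_n)$, the local CLEB process hits $\cont(\cN_{A_n^2})$. The idea is that $|\cN_{A_n^2}|$ is of order $A_n^2$ (by \Cref{lem:Nk_lower}), and these vertices are never removed from the exposed set. Hence in each step of the window there is a probability of order $A_n^2/n$ of hitting them. Over roughly $n\eps_n = n/A_n$ steps, the expected number of hits is of order $A_n$, which tends to infinity. We then convert this into a bound on the probability of no hit via a conditional product of non-hit probabilities.

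\textbf{Step 1: good events.} We restrict to events of high $\Pt$-probability.
\begin{itemize}
\item By \Cref{lem:Nk_lower} with $k=A_n^2$ (so that $\delta_n=A_n^2/n\to0$), we have $N_{A_n^2}\ge A_n^2/2$ except on an event of probability $(2eA_n^2/n)^{A_n^2/2}=o(\eps_n)$. If $\tauend<A_n^2$, then this is already absorbed by the next point.
\item By \Cref{lem:tauend_lower} with $\delta_n=2\eps_n$, we have $\tauend-\tauendone>2n\eps_n$ except with probability $O(\eps_n)$. So the walk is still running throughout the window.
\item By \Cref{lem:alpha1}, ${\sf f}$ is not hit before $2n\eps_n$ except with probability $O(|{\sf f}|\eps_n)$.
\end{itemize}
The last event is not strictly needed, but it is harmless to intersect with it.

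\textbf{Step 2: per-step hitting probability.} Fix a step $k$ in the window and apply \Cref{lem:hitting_prob} with $A$ equal to the set of vertices of $V_k\setminus X_k$ whose $\cont$ meets $\cN_{A_n^2}$. Here some care is needed: $\cN_{A_n^2}$ may have been partly contracted into the tip $X_k$. The target event should therefore be read as hitting $\cN_{A_n^2}$ in the original vertex set ${\sf V}$, and hitting $\Tip_k$ does not count.

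To handle this, we split into two cases.
\begin{itemize}
\item If $\tip_k>1$, the head of the exposed edge is uniform over the $n-\tip_k$ vertices of ${\sf V}\setminus\Tip_k$.
\item If $\tip_k=1$, the head is uniform over ${\sf V}\setminus({\sf f}\cup\{X_k\})$.
\end{itemize}
In either case, the conditional probability of landing in $\cN_{A_n^2}\setminus(\Tip_k\cup{\sf f})$ is at least
\[
\frac{N_{A_n^2}-\tip_k-|{\sf f}|}{n}.
\]

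The main obstacle is that $\tip_k$ could swallow most of $\cN_{A_n^2}$; the snowball effect makes this genuinely possible. I would first try to sidestep it as follows. Whenever $\Tip_k\supseteq\cN_{A_n^2}$ (or even meets it), the tip at time $k$ already contains a vertex of $\cN_{A_n^2}$. That vertex must have been absorbed by some cycle-closing step that hit the past. Such a step either hit $\cN_{A_n^2}$ itself, or hit a contracted vertex containing it. Getting a hit inside the window therefore requires a slightly more careful statement.

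To deal with this, I would work with a chunk rather than with a vertex set that gets swallowed. The plan is to argue that in each excursion of the walk from $X_{\tau_j}$, the first step leaves the giant, and the step that closes the excursion lands uniformly on $\cN_{\tau_j}\setminus\Tip$. This gives a chance of at least $N_{A_n^2}/N_{k}$ to hit an original vertex of $\cN_{A_n^2}$ that is not in the current tip. By \Cref{lem:Nk_lower}, the number of excursions in the window is of order at least $\eps_n n\cdot k/n$, which is large. If instead this gets messy, I would fall back on the simple counting: $\tip_k\le N_k\le 2n\eps_n$, and restrict to steps where $\cN_{A_n^2}\not\subseteq \Tip_k$. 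I would aim to show that hitting $\cN_{A_n^2}$ from outside $\cN_{A_n^2}$ happens with conditional probability at least $A_n^2/(4n)$ in each of the roughly $n\eps_n$ steps, unless a hit has already occurred in the window.

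\textbf{Step 3: conclusion.} Multiplying the conditional non-hit probabilities over the window (a martingale/product argument using $\cF_k$) gives
\[
\Pt(\text{no hit in the window},\ \text{good events})\le\Bigl(1-\frac{A_n^2}{4n}\Bigr)^{n\eps_n-1}\le e^{-A_n/4+o(1)}=o(\eps_n).
\]
Adding the error probabilities from Step 1 yields the stated bound $1-O(|{\sf f}|\eps_n)$.
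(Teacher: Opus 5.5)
Your skeleton (the good events of Step 1, a per-step lower bound from \Cref{lem:hitting_prob}, and a product over the window) is the paper's. The paper simply asserts that for every $k\in(n\eps_n,2n\eps_n)$ the conditional probability of hitting $\cN_{A_n^2}$ is at least $(N_{A_n^2}-|{\sf f}|)/n$, and then multiplies. It never discusses $\cN_{A_n^2}$ being absorbed into the tip, so the obstacle you flag is genuine.

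However, you do not resolve it, and Step 3 still multiplies $1-A_n^2/(4n)$ over all $\approx n\eps_n$ steps. Your bound $(N_{A_n^2}-\tip_k-|{\sf f}|)/n$ is vacuous once $\tip_k\ge A_n^2$, which is the typical situation right after any contraction in the window.

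Your fallback claim is also false: it says the probability is at least $A_n^2/(4n)$ at every step unless a hit has already occurred in the window. Once some $w\in\cN_{A_n^2}$ lies in the giant, an excursion that returns to the giant at an original vertex outside $\cN_{A_n^2}$ produces a tip containing $w$ without any hit of $\cN_{A_n^2}$. At the next step the hitting probability is only $|\cN_{A_n^2}\setminus\Tip_{k-1}|/(n-\tip_{k-1})$. Before time $n\eps_n$ the walk already hits $\cN_{A_n^2}$ of order $A_n$ times in expectation, and each hit pulls the hit vertex and everything exposed after it into the tip. So typically only $O(A_n)+|{\sf f}|$ vertices of $\cN_{A_n^2}$ stay outside the giant, and this probability is far below $A_n^2/(4n)$. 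Restricting to steps with $\cN_{A_n^2}\not\subseteq\Tip_k$ does not help, since partial containment already breaks the bound. The excursion-counting alternative is only sketched and yields no estimate.

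The missing observation is that the tip can meet $\cN_{A_n^2}$ only immediately after a contraction step. Suppose step $k-1$ entered a fresh vertex. Then $\tip_{k-1}=1$, and $X_{k-1}$ was first exposed at time $k-1>A_n^2$, so $X_{k-1}\notin\cN_{A_n^2}$. By \Cref{cor:cond_law}~b.\ (equivalently \Cref{lem:hitting_prob}), the conditional probability of hitting $\cN_{A_n^2}$ at step $k$ is then exactly $|\cN_{A_n^2}\setminus{\sf f}|/(n-1-|{\sf f}|)\ge(N_{A_n^2}-|{\sf f}|)/n$, with no loss.

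The event $\{\tip_{k-1}=1\}$ is $\cF_{k-1}$-measurable, and its complement requires a past-hit at step $k-1$. For $j\le 2n\eps_n$, a past-hit at step $j$ has conditional probability at most $N_{j-1}/(n-N_{j-1}-|{\sf f}|)\le 3\eps_n$. Hence the expected number of contraction steps in the window is $O(n\eps_n^2)$. By Markov's inequality, at least $n\eps_n/2-1$ window steps have $\tip_{k-1}=1$, except with probability $O(\eps_n)$.

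Run your product (supermartingale) argument only over these previsible steps. This gives $(1-A_n^2/(4n))^{n\eps_n/2-1}\le e^{-A_n/8+o(1)}=o(\eps_n)$, which together with your Step 1 error terms yields the lemma.
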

\begin{proof}
By \Cref{lem:hitting_prob}, for any $k\in (n\eps_n,2n\eps_n)$, we know that
$$\Pt(\text{hit $\cN_{ A_n^2}$ in step $k$ of the CLEB walk$|\cF_{k-1}$)$\mathds1_{\tauend-\tauendone>k-1}$}\ge\frac{N_{ A_n^2}-|{\sf f}|}{n},$$
almost surely. Let us consider the following event, $$\cE_n:= \text{the CLEB walk does not hit $\cN_{ A_n^2}$ in any step in the interval $(n\eps_n,2n\eps_n)$.}$$
We obtain,
$$\Pt(\cE_n|N_{ A_n^2}\ge A_n^2 /2)
\le\left(1-\frac{ A_n^2}{4n}\right)^{n\eps_n}\le\exp(- A_n /4),$$
for all large enough $n$. Also by \Cref{lem:Nk_lower}, 
$$\Pt(N_{ A_n^2} \le A_n^2/2,\tauend-\tauendone>2n\eps_n)\le (0.5)^{n\eps_n},$$
for all large enough $n$. Now,
\begin{align*}
\Pt(\cE_n)&\le\Pt(\cE_n\cap\{\tauend-\tauendone>2n\eps_n\})+\Pt(\tauend-\tauendone\le 2n\eps_n)&\\
&\le\Pt(\cE_n\cap\{N_{ A_n^2}> A_n^2/2\}\cap\{\tauend-\tauendone> 2n\eps_n\})+(0.5)^{n\eps_n}+O(|{\sf f}|\eps_n)&\\
&\le\exp(- A_n /4)+(0.5)^{n\eps_n}+O(|{\sf f}|\eps_n)=O(|{\sf f}|\eps_n).&
\end{align*}
Taking the complement of $\cE_n$, we conclude.
\end{proof}
\noindent
Obviously, $\cN_{ A_n^2}$ is hit at a time in $\hitpast$. We now want to say that $\cN_{A_n^2}$ is hit at a time which is large enough, and by the time $\cN_{A_n^2}$ is hit for the first time, ${\sf f}$ is not hit with high probability.
\begin{equation}
   I := \min\{j : \tau_j>n\eps_n  \text{ and $\cN_{A_n^2}$ is hit in step $\tau_j$}\}, \label{eq:I}
\end{equation}
and define \begin{equation}
    \cS:= \left\{\tau_I < 2n\eps_n, N_{\tau_I} \ge \frac{n\eps_n}{2}\right\}. \label{eq:cS}
\end{equation}
\begin{lemma}\label{lem:cS}
    We have 
    $$
    \Pt(\cS) \ge 1-O(|{\sf f}|\eps_n). 
    $$
\end{lemma}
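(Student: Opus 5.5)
The plan is to split $\cS^c$ into three pieces and bound each by earlier lemmas:
\[
\cS^c \subseteq \cE_n \cup \{\tauend-\tauendone \le 2n\eps_n\} \cup \left\{\tau_I<2n\eps_n,\ N_{\tau_I}<\tfrac{n\eps_n}{2},\ \tauend-\tauendone>2n\eps_n\right\}.
\]
Here $\cE_n$ is the event from the proof of \Cref{lem:hit_close_to_tree}, namely that no vertex of $\cN_{A_n^2}$ is hit in steps $(n\eps_n,2n\eps_n)$.

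The first piece is handled by \Cref{lem:hit_close_to_tree}, which gives $\Pt(\cE_n)=O(|{\sf f}|\eps_n)$. On $\cE_n^c$, some step $t\in(n\eps_n,2n\eps_n)$ hits $\cN_{A_n^2}$. Since $A_n^2\le \tau_j$ for any relevant earlier hitting time, this is a hit of the past, so $t=\tau_j$ for some $j$ with $\tau_j>n\eps_n$. Hence $I$ is well defined and $\tau_I\le t<2n\eps_n$.

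I should be careful with one point. The step $t$ hits $\cN_{A_n^2}\subseteq \cN_{t-1}$. I need $\cN_{A_n^2}\subseteq \cN_{\tau_{j-1}}$ for the previous hitting time, so that $t$ genuinely belongs to $\hitpast$ as defined recursively. I will argue that every hit of an already-exposed vertex is a time in $\hitpast$. Between consecutive $\tau$'s the walk only visits fresh vertices, and a hit of a vertex exposed after $\tau_{j-1}$ is itself a hit of $\cN_{t-1}$. This needs checking against the recursive definition, which requires hitting $\cN_{\tau_j}$. If the definition genuinely misses intra-excursion hits, I will note that such a hit closes a cycle within the excursion, which still counts as hitting the tip's content. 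In any case I will take the first hit of $\cN_{A_n^2}$ after time $n\eps_n$ and verify that it lies in $\hitpast$ because $A_n^2\ll n\eps_n$.

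The second piece is handled by \Cref{lem:tauend_lower} with $\delta_n=2\eps_n$, which gives $\Pt(\tauend-\tauendone\le 2n\eps_n)=O(\eps_n)$.

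For the third piece, on $\tau_I>n\eps_n$ and $\tau_I\le\tauend$, monotonicity gives $N_{\tau_I}\ge N_{n\eps_n}$. \Cref{lem:Nk_lower} with $k=n\eps_n$ and $\delta_n=\eps_n$ gives $\Pt(N_{n\eps_n}\le n\eps_n/2,\ n\eps_n\le\tauend)\le (2e\eps_n)^{n\eps_n/2}$, which is superpolynomially small.

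There are some indexing offsets, since the walk is counted from $\tauendone$. Because $\tauendone=|{\sf f}|$ is fixed under $\Pt$, these shifts are absorbed by slightly adjusting constants, for instance by using $N_{n\eps_n}$ with the lemma applied at $k=n\eps_n-\tauendone$.

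Summing the three bounds gives $O(|{\sf f}|\eps_n)$. I expect the main obstacle to be only the bookkeeping showing that the hit provided by \Cref{lem:hit_close_to_tree} is indeed some $\tau_j$ with $\tau_j>n\eps_n$. All the probabilistic estimates are already available.
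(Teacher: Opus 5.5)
Your three-piece decomposition is exactly the paper's proof, which just cites \Cref{lem:hit_close_to_tree,lem:Nk_lower,lem:tauend_lower}. Your handling of the second and third pieces is fine.

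\textbf{The gap.} It is in the step you flag yourself: that a step $t\in(n\eps_n,2n\eps_n)$ hitting $\cN_{A_n^2}$ lies in $\hitpast$. None of your three justifications works.
\begin{enumerate}
\item By the recursive definition, $\tau_{j+1}$ is the first later time at which $\cN_{\tau_j}$ (not $\cN_{t-1}$) is hit. So a step hitting $\cN_{t-1}\setminus\cN_{\tau_j}$ is \emph{not} in $\hitpast$. Such intra-excursion hits genuinely occur; they are what the events $\cH_k$ in the proof of \Cref{lem:hk} control. Hence it is false that the walk visits only fresh vertices between consecutive $\tau$'s.
\item The cycle closed by such a hit consists only of vertices exposed after $\tau_j$. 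It does not contain $X_{\tau_j}$, so it is not a hit of the tip's content.
\item $A_n^2\ll n\eps_n$ is beside the point. What you need is that the last element of $\hitpast$ before $t$ (if any) is at least $A_n^2$, and this is not deterministic.
\end{enumerate}
For a concrete failure, suppose a cycle closes within the first few steps of stage 2. Then $\cN_{\tau_1}$ has only $O(|{\sf f}|)$ vertices. The next excursion ends only when $\cN_{\tau_1}$ is hit, so it typically lasts order $n$ steps. Every hit of $\cN_{A_n^2}$ in $(n\eps_n,2n\eps_n)$ is then intra-excursion. On such an outcome $\cE_n^c$ can hold while $\cS$ fails.

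\textbf{The repair.} Add the event $\{\tau_1\le A_n^2\}$ to your union.
\begin{itemize}
\item Before $\tau_1$ the walk only meets fresh vertices, so every tip is a singleton. By \Cref{lem:hitting_prob}, each of the first $A_n^2$ steps hits the past with probability at most $A_n^2/(n-|{\sf f}|-1)$. A union bound gives $\Pt(\tau_1\le A_n^2)=O(A_n^4/n)=o(\eps_n)$.
\item On $\{\tau_1>A_n^2\}$, every $\tau_j$ exceeds $A_n^2$, so $\cN_{A_n^2}\subseteq\cN_{\tau_j}$ for all $j$.
\item Now suppose $t$ hits $\cN_{A_n^2}$. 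If no element of $\hitpast$ precedes $t$, then $t=\tau_1$, because $\cN_{A_n^2}\subseteq\cN_{t-1}$. Otherwise $t$ hits $\cN_{\tau_j}$ for the last $\tau_j<t$, and then $t=\tau_{j+1}$.
\end{itemize}
So $I$ is defined with $\tau_I\le t<2n\eps_n$, and the rest of your argument goes through. The paper asserts this membership as obvious just before defining $I$; strictly, it only holds off this small event.
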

\begin{proof}
    This follows from \Cref{lem:hit_close_to_tree,lem:Nk_lower,lem:tauend_lower}.
\end{proof}
We now show that after $\tau_I$, it is unlikely for the intervals between two occurrences of $\hitpast$ to be very long. In fact these intervals are roughly Geometric $({N_{\tau_j}}/n)$  (the snowball effect as described in \Cref{sec:outline}).
\begin{lemma}\label{lem:interval_small}
For every $m\ge 1, j \ge 1$,
$$
\Pt(\tau_{j+1} - \tau_j> m| \cF_{\tau_j})\mathds1_{\tau_j<\infty} \le \left(1-\frac{N_{\tau_j}-|{\sf f}|}{n}\right)^m\mathds1_{\tau_j<\infty},
$$
almost surely.    
\end{lemma}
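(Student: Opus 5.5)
We need to bound the probability that the excursion starting after $\tau_j$ lasts more than $m$ steps, i.e.\ that none of the steps $\tau_j+1,\dots,\tau_j+m$ hits $\cN_{\tau_j}$. The plan is to show that in each such step, conditionally on the past, the walk hits $\cN_{\tau_j}$ (or the boundary, which also ends things) with probability at least $(N_{\tau_j}-|{\sf f}|)/n$. Then we multiply these bounds via the tower property.

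\emph{Per-step lower bound.} Fix $k$ with $\tau_j\le k<\tau_j+m$ and work on the event $\{\tau_j<\infty,\ \tau_{j+1}>k,\ \tauend>k\}$. On this event every vertex exposed after $\tau_j$ and before $k$ is either a new singleton or part of a contracted tip. In either case the set $A:=\cN_{\tau_j}\setminus\Tip_k$, viewed inside $V_k$, consists of vertices different from $X_k$. Is $\cN_{\tau_j}$ swallowed into the tip before step $k$? Any contraction that absorbs a vertex of $\cN_{\tau_j}$ requires hitting $\cN_{\tau_j}$, and that would be a time in $\hitpast$ in $(\tau_j,k]$, which is excluded. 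Hence $\Tip_k\cap\cN_{\tau_j}\subseteq \Tip_{\tau_j}$, and more simply we can take $A$ to be the vertices of $V_k$ whose $\cont$ lies in $\cN_{\tau_j}$ other than the tip. I expect a small subtlety here: $\Tip_{\tau_j}$ itself is part of $\cN_{\tau_j}$, and the walk may currently be sitting in a tip that does not contain it, so hitting $X_{\tau_j}$ also counts. I would just use $|\cont(A)|\ge N_{\tau_j}-\tip_k$ only in the case where the tip contains old vertices. Only the first step after $\tau_j$ starts from the tip $X_{\tau_j}$. After that step, the tip consists of new vertices only, so $\cont(A)=\cN_{\tau_j}$.

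\emph{Applying \Cref{lem:hitting_prob}.} If $\tip_k=1$, the lemma gives probability $|\cN_{\tau_j}\setminus{\sf f}|/(n-|{\sf f}|-1)\ge (N_{\tau_j}-|{\sf f}|)/n$. If $\tip_k>1$, it gives $|\cont(A)|/(n-\tip_k)\ge (N_{\tau_j}-|{\sf f}|)/n$, provided $|\cont(A)|\ge N_{\tau_j}-|{\sf f}|$. For the first step from $X_{\tau_j}$, we need to check this inequality. I would handle it by noting that after $\tau_j\ge\alpha_1$ all exposed vertices lie in $\Tip_{\tau_j}\cup{\sf f}$. Before $\alpha_1$, by contrast, the tip excludes ${\sf f}$. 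The honest bound there should come from counting in the lemma's proof: the outgoing edges of the tip land outside the tip, and the targets in $\cN_{\tau_j}\setminus\Tip$ number at least $N_{\tau_j}-\tip-|{\sf f}|$. This is the main obstacle: reconciling the first step with the stated bound. If it fails, I would use the trivial bound $1$ for that step, losing one factor. Steps in which the walk hits $\partial$ only end the process, which is harmless since $\tau_{j+1}=\infty$ is compatible with the event. Strictly, we must bound $\Pt(\tau_{j+1}-\tau_j>m,\ \tauend>\tau_j+m)$, and the statement should be read with that convention.

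\emph{Iteration.} Let $E_i=\{$no hit of $\cN_{\tau_j}$ in steps $\tau_j+1,\dots,\tau_j+i\}$. Then $\Pt(E_{i+1}\mid\cF_{\tau_j+i})\le (1-(N_{\tau_j}-|{\sf f}|)/n)\mathds 1_{E_i}$, because $N_{\tau_j}$ is $\cF_{\tau_j}$-measurable (strong Markov at the stopping time $\tau_j$). Conditioning successively and inducting on $i$ up to $m$ yields the claimed bound almost surely on $\{\tau_j<\infty\}$.
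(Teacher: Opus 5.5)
\paragraph{Verdict.} Your route is the paper's: a per-step lower bound $(N_{\tau_j}-|{\sf f}|)/n$ for hitting $\cN_{\tau_j}$, taken from \Cref{lem:hitting_prob} and iterated by the tower property. Your treatment of the steps $\tau_j<k<\tau_{j+1}$ is correct. No contraction can absorb a vertex of $\cN_{\tau_j}$ without producing a time in $\hitpast$, so $\Tip_k\cap\cN_{\tau_j}=\emptyset$. Both cases of \Cref{lem:hitting_prob} then give at least $(N_{\tau_j}-|{\sf f}|)/n$, because ${\sf f}\subseteq\cN_{\tau_j}$.

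\paragraph{The first step.} The obstacle you flagged at step $\tau_j+1$ is genuine and cannot be removed: the inequality with exponent $m$ is false. The paper's proof makes exactly this slip, since it asserts the per-step bound also for $k=\tau_j$. At that step the walk leaves $X_{\tau_j}$ itself, whose content lies in $\cN_{\tau_j}$, and by \Cref{cor:cond_law}(c) the head is uniform on ${\sf V}\setminus\Tip_{\tau_j}$. Hence
\[
\Pt(\tau_{j+1}=\tau_j+1\mid\cF_{\tau_j})=\frac{N_{\tau_j}-\tip_{\tau_j}}{n-\tip_{\tau_j}}.
\]
This is at most $|{\sf f}|/(n-\tip_{\tau_j})$ as soon as $\cN_{\tau_j}\setminus\Tip_{\tau_j}\subseteq{\sf f}$. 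That happens, for example, for $\tau_j\ge\alpha_1$, or for $j=1$ when the first loop closes at $\rho_R$. So for $m=1$ the lemma would require $|{\sf f}|/(n-\tip_{\tau_j})\ge (N_{\tau_j}-|{\sf f}|)/n$, which fails once $3|{\sf f}|<N_{\tau_j}\le n/2$.

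The same failure occurs for $\Pt(1<\tau_{j+1}-\tau_j<\infty\mid\cF_{\tau_j})$ once $N_{\tau_j}\gg\sqrt n$. Indeed, after a fresh first step, $\cN_{\tau_j}$ is hit before $\partial$ with probability at least $1-1/(N_{\tau_j}-|{\sf f}|+1)$. This includes the regime $N_{\tau_j}\asymp n\eps_n$ used later.

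\paragraph{Your fallback is the right statement.} Using the trivial bound for the first step yields $(1-(N_{\tau_j}-|{\sf f}|)/n)^{m-1}$. This is the correct lemma, not a loss, and it is all that is needed later. In \Cref{lem:hk} the exponent $n^{\delta_0}$ becomes $n^{\delta_0}-1$. In \Cref{lem:B_domination,lem:tilde_recursion} the excursion length is dominated by $1+{\sf Ge}_{j+1}$. That changes the factor $1+n/(N_{\alpha_1}-|{\sf f}|)^2$ to at most $1+2n/(N_{\alpha_1}-|{\sf f}|)^2$ and leaves the $\exp(O(A_n^3))$ bound intact.

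\paragraph{Two bookkeeping corrections.} First, hitting $\partial$ is not harmless for the event as literally written. The event $\{\tau_{j+1}-\tau_j>m\}$ contains $\{\tau_{j+1}=\infty\}$, which has conditional probability at least $1/(n-\tip_{\tau_j})$, so no bound decaying in $m$ can hold for it. The event to bound is $\{\tau_j+m<\tau_{j+1}<\infty\}$, which is what the paper's proof actually treats. Second, your $E_i$ must also require $\tauend>\tau_j+i$. Once the walk has stopped, the per-step factor $1-(N_{\tau_j}-|{\sf f}|)/n$ is no longer available.
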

\begin{proof}

    Observe that conditioned on the $\sigma$-algebra $\cF_k$ and on the event $ \tau_j \le k <\tau_{j+1}$, the probability that $k+1=\tau_{j+1}$ is at least 
$$
\frac{N_{\tau_j}-|{\sf f}|}{n-\tip_k} \ge \frac{N_{\tau_j}-{|\sf f|}}{n}.
$$
Since this bound is independent of $k$, iterating this bound, we get
$$
\Pt(\tau_{j+1} - \tau_j \in (m,\infty)| \cF_{\tau_j})\mathds1_{\tau_j <\infty}  \le \left(1-\frac{N_{\tau_j}-{|\sf f|}}{n}\right)^m \mathds1_{\tau_j<\infty},
$$
almost surely, as desired.
\end{proof} 

\subsection{Bounding the growth of blue vertices}\label{sec:blue_bound}
Recall the inductive definition of blue vertices, and that we need to control the growth of blue vertices. We first show that item a. in the iterative definition of the growth of blue vertices do not happen with probability tending to 1.

Recall the definition of $\tau_I$ from \eqref{eq:I} (the first time $\cN_{A_n^2}$ is hit after time $n\eps_n$). Let $\alpha_1 = \tau_{j_1}$. Suppose for $j_1\le j \le J_{\max}$,
$$
\cV_j:=\{\exists i:  j_1 \le i \le j, \tau_{i-1}+1< \tau_{i}, \tau_i \in \hittree \}
$$
and for $j>J_{\max}$, $\cV_j = \cV_{J_{\max}}$. In words, $\cV_{J_{\max}}^c$  contains the event that all the times in $\hittree$ occur immediately after a time in $\hitpast$. Said otherwise,  $\cV_{J_{\max}}$ entails that the event in item a. of the iterative definition of blue vertices occurs in some excursion. We insist that as long as $\alpha_1 > \tau_I$,  $\cV_{J_{\max}}$ includes the event that when the CLEB walk hits ${\sf f}$ for the first time, it does not do so immediately after some time in $\hitpast$ (i.e. $\tau_{j_1-1} <\tau_{j_1}-1$). Note that if this event occurs, it will create a lot of blue vertices at time $\alpha_1 = \tau_{j_1}$ which is undesirable. Also observe that $\cV_j$ is increasing, and hence if $\cV_{J_{max}}$ happens, we have no hope of reasonably bounding the growth of blue vertices.  This is why we prove the next lemma which shows that $\Pt(\cV_{J_{\max}}) = o(1)$.

\begin{lemma}\label{lem:hk}
    $$
    \Pt(\cV_{J_{max}}) \to 0
    $$
    as $n\to \infty$.
\end{lemma}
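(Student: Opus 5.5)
The plan is to show that an excursion can end in ${\sf f}$ without doing so immediately only if ${\sf f}$ is hit from a \emph{small, non-giant contracted tip}. I then bound the expected number of steps at which such a tip is present.

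\textbf{Reduction.} By \Cref{lem:hitting_prob}, ${\sf f}$ can be hit in step $k+1$ only if $\tip_k>1$, and then with conditional probability $|{\sf f}|/(n-\tip_k)$. Call a step $k\in(\tau_{i-1},\tau_i)$ \emph{bad} if $k\neq\tau_{i-1}$ and $\tip_k>1$. Inside an excursion, after its first step, the tip can be non-singleton only if a cycle was closed \emph{within} the excursion. Such a cycle hits a vertex exposed during the current excursion, not one in $\cN_{\tau_{i-1}}$, since hitting $\cN_{\tau_{i-1}}$ would end the excursion. On $\cV_{J_{\max}}$, some step $\tau_i\in\hittree$ with $\tau_i>\tau_{i-1}+1$ hits ${\sf f}$ from the tip $X_{\tau_i-1}$ with $\tau_i-1$ bad. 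Hence
\[
\Pt(\cV_{J_{\max}})\le \Pt(\alpha_1\le 2n\eps_n)+\Pt(\cS^c)+\frac{|{\sf f}|}{n/2}\,\Et\Big[\#\{k\in[\tau_I,\tauend): k \text{ bad}\}\,\mathds 1_{\cS}\Big],
\]
plus a negligible term for $\tip_k>n/2$, which can be handled with \Cref{lem:Nk_lower}-type estimates. By \Cref{lem:alpha1} and \Cref{lem:cS}, the first two terms are $O(|{\sf f}|\eps_n)\to0$.

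\textbf{Counting bad steps after $\tau_I$.} On $\cS$ we have $N_{\tau_j}\ge n\eps_n/2$ for every $\tau_j\ge\tau_I$, since $N$ is nondecreasing. So by \Cref{lem:interval_small}, each subsequent excursion length is stochastically dominated by a Geometric$\big((n\eps_n/2-|{\sf f}|)/n\big)$ variable, with mean $O(A_n)$ and second moment $O(A_n^2)$. For a step $k$, let $a_k=k-\tau_{j}$ be the age of the current excursion. Each of the $a_k$ earlier steps of the excursion closes an internal cycle with conditional probability at most $a_k/n$ (again by \Cref{lem:hitting_prob}, with $A$ the set of vertices exposed in the excursion so far). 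Thus
\[
\Pt(k\text{ bad}\mid a_k)\le a_k^2/n .
\]
I then sum over $k$, using $\Et(\tauend-\tauendone)\le n$ from \Cref{lem:total_steps}, together with the geometric domination of $a_k$ uniformly in $k$. For the latter I will bound $\Pt(a_k\ge m)\le(1-\eps_n/3)^m$ directly via the per-step lower bound on ending the excursion. This gives
\[
\Et[\#\text{bad steps}]\le \sum_k \Pt(k\le\tauend)\,O(A_n^2)/n=O(A_n^2).
\]
The last term is therefore $O(|{\sf f}|A_n^2/n)\to0$.

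\textbf{Main obstacle.} The delicate point is making the per-step bound $\Pt(k\text{ bad}\mid\cF)\lesssim a_k^2/n$ rigorous while $a_k$ itself is random and correlated with the cycle events. I would first try a martingale/union argument: write the indicator of being bad as a sum over the step $s$ in the excursion where the internal cycle closed. Then bound
\[
\sum_k\sum_{s<k}\Pt(\text{cycle at } s,\ \text{no past hit in }(s,k])
\]
by $\sum_s \Pt(\text{cycle at }s)\cdot O(A_n)$. Next, $\sum_s\Pt(\text{cycle at }s)\le \Et\sum_s a_s/n=O(A_n)$. This gives $O(A_n^2)$ cleanly, conditional only on $\cS$ and the geometric domination from \Cref{lem:interval_small}.
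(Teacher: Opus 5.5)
Your proposal is correct and is essentially the paper's argument. You use the same reduction: on $\cV_{J_{\max}}\cap\{\alpha_1>\tau_I\}$, some hit of ${\sf f}$ must immediately follow a step that closes a cycle inside the current excursion, which is exactly the paper's event $\cH_k$. This is restricted to $\cS$, with excursions after $\tau_I$ controlled by \Cref{lem:interval_small} and the sum over steps controlled by $\Et(\tauend-\tauendone)\le n$. The only difference is bookkeeping: the paper truncates the current excursion at $n^{\delta_0}$ vertices and union-bounds over long excursions, while you integrate the geometric tail directly. Your ``main obstacle'' largely disappears once you note that $\tip_k>1$ holds exactly when step $k$ itself closes a cycle, so $k$ is bad with conditional probability of order $a_k/n$ given $\cF_{k-1}$, and no sum over earlier cycle times $s$ is needed.
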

\begin{proof}
We first recall the idea. We ensured in \Cref{cor:cond_law} that to hit ${\sf f}$, in step $k+1$, it must be the case that $\tip_k>1$. Thus if $\cV_{J_{\max}} \cap \{\alpha_1>\tau_I\}$ were to happen, at some point $k$ such that $\tau_I<\tau_j < k<\tauend$, the CLEB walk must hit a previously exposed vertex but $k \not \in \hitpast$, and then hit ${\sf f}$ in step $k+1$. Since the excursions are small (\Cref{lem:interval_small}), the expected number of times when such a $k$ occurs is  $o(n)$, which is not large enough to hit ${\sf f}$ with positive probability.

We now expand on this.
For every $k$, define $\tau_{j_k}$ to be the largest element in $\hitpast$ strictly smaller than $k$ (if there is no such element, define $\tau_{j_k} = \tauendone$).
Now define $\cH_k$ to be the event that $\tau_I<k-1 <\tauend$, $\cN_{k-1}\setminus \cN_{\tau_{j_k}} \neq \emptyset$ and
\begin{itemize}
    \item in step $k$, the CLEB walk hits $\cN_{k-1}\setminus \cN_{\tau_{j_k}}$,
    \item $k+1 \in \hittree$.
\end{itemize}
As a consequence of the fact that ${\sf f}$ cannot be hit from a brand new exposed vertex (\Cref{cor:cond_law}), we must have
$$
\cV_{J_{\max}}\subseteq (\cup_{k \ge 1}\cH_k) \cup\{\alpha_1\le \tau_I\}.
$$
Recall the event $\cS$ from \eqref{eq:cS}, which we mention here for convenience:
$$
\cS:= \left\{\tau_I < 2n\eps_n, N_{\tau_I} \ge \frac{n\eps_n}{2}\right\}.
$$
Using  \Cref{lem:cS,lem:alpha1}, we know that 
\begin{equation}
  \Pt(\tilde \cS) := \Pt(\cS \cap \{\alpha_1>\tau_I\}) \to 1. \label{eq:tildecs}  
\end{equation}
 Therefore it is enough to upper bound $$\cup_{k> \tau_I}\cH_k \cap\tilde \cS.$$ Observe that $\tilde \cS$ is $\cF_{\tau_I}$-measurable.
For $k>\tauendone+1$, let $\cU_{k-1} $ be the event 
\begin{itemize}
    \item $k-1 <\tauend$, 
    \item $0<N_{k-1}- N_{\tau_{j_k} } \le n^{\delta_0}$
\end{itemize}
for some fixed small $\delta_0>0$. In other words, $\cU_{k-1}$ is the event that the excursion in which the index $k$ resides is not too big.
Observe that $\cU_{k-1}$ is $\cF_{k-1}$ measurable. Thus we can write using union bound
\begin{align*}
    \Pt(\cup_{k > \tau_I}\cH_k \cap \tilde \cS) \le \sum_{k \ge 1}\Et(\Pt(\cH_k|\cF_{k-1})\mathds1_{\cU_{k-1}}\mathds1_{\tau_I <k \le \tauend}) + \Pt(\cup_{k >\tau_I}(\cH_k \cap \cU^c_{k-1} \cap \tilde \cS ) ).
\end{align*}
We separately bound each term. First, using \Cref{lem:hitting_prob}, 
\begin{equation}
    \Pt(\cH_k|\cF_{k-1})\mathds1_{\cU_{k-1}} \le \frac{n^{\delta_0 }}{n-n^{\delta_0} - |{\sf f}|}\frac{|{\sf f}|}{n-n^{\delta_0}  - |{\sf f}|} \mathds1_{\cU_{k-1}}\le O\left(n^{\delta_0-2}\right)\mathds1_{k-1<\tauend},
\end{equation}
almost surely. The constant in $O$ above depends on $\delta_0$ and ${|{\sf f}|}$ and nothing else. Summing over $k$,
\begin{equation}
  \sum_{k \ge 1}\Et(\Pt(\cH_k|\cF_{k-1})\mathds1_{\cU_{k-1}}\mathds1_{\tau_I <k \le \tauend})\le O\left( n^{\delta_0-2}\right) \Et\left(\sum_{k \ge 1}\mathds1_{k-1<\tauend}\right) =O\left(n^{\delta_0-1}\right),\label{eq:bad1}
\end{equation}
where we used \Cref{lem:total_steps} to bound $\Et(\tauend)$.
Next, for the second term, we simply bound the probability of the union over $k > \tau_I$ of $\cU_{k-1}^c  \cap \tilde \cS$. Following the definition of $\cU_k$:
$$
\Pt(\cup_{k > \tau_I}\cU^c_{k-1} \cap \tilde{\cS}) \le \Pt(\cup_{j \ge I} \{\tau_j <\infty,\tau_{j+1} -\tau_j \in (n^{\delta_0},\infty)\} \cap \tilde \cS).
$$
Now note using \Cref{lem:interval_small},
\begin{multline*}
\Pt(\tau_{j+1}-\tau_j \in (n^{\delta_0},\infty) |\cF_{\tau_j})\mathds1_{\tau_I\le \tau_j <\infty}\mathds1_{\tilde \cS}\\ \le \left(1-\frac{n\eps_n - 2|{\sf f}|}{2n}\right)^{n^{\delta_0}}\mathds1_{\tilde \cS , \tau_I\le \tau_j<\infty}\le \exp\left(-O\left(\eps_n n^{\delta_0}\right)\right)\mathds1_{\tilde \cS , \tau_I\le \tau_j<\infty},
\end{multline*}
almost surely. We used the fact that if $\tau_j \ge \tau_I$, then on the event $\cS$, $N_{\tau_j} \ge N_{n\eps_n} \ge n\eps_n/2$.
Summing over $j$ and using union bound,
\begin{multline*}
\Pt(\cup_{j \ge I} \{\tau_j <\infty,\tau_{j+1} -\tau_j \in (n^{\delta_0},\infty)\} \cap \tilde{\cS}) 
\le \exp\left(-O\left(\eps_n n^{\delta_0}\right)\right) \Et(\tauend)\\ \le n\exp\left(-O\left(\eps_n n^{\delta_0}\right)\right).
\end{multline*}

The right hand side above tends to 0 because of our choice of $A_n=\eps_n^{-1} = \log \log (n)$. Combining the above estimates, the proof of the lemma is complete.
\end{proof}

Next, we need to control the growth of blue vertices on the complement of $\cV_{J_{\max}}$. Recall that $\cB_k$ denotes the set of blue vertices after step $k$. We first make a simple observation. After stage $1$ blue vertices start growing at $\alpha_1$ and also recall the definition of $I, \cS$ and $\tilde \cS$ from \eqref{eq:I}, \eqref{eq:cS}, \eqref{eq:tildecs} and that $\alpha_1 = \tau_{j_1}$. Observe that
\begin{equation}
     \cS':=\tilde \cS \cap\cV^c_{j_1} = \cS \cap \{\tau_{j_1} > \tau_I\} \cap \cV^c_{j_1} \subseteq \{\cB_{\alpha_1} \subseteq \cN_{A_n^{2}}\}. \label{eq:Balpha_small}
\end{equation}
Indeed, at time $\tau_I$, the number of vertices not in the tip is at most $N_{A_n^{2}}$ and since $\cV_{j_1}$ does not occur, $\tau_{j_1}-1 \in \hitpast$. Hence the number of vertices not in the tip at time $\tau_{j_1}-1$ is contained in $\cN_{A_n^{2}}$. Consequently, number of blue vertices at time $\alpha_1 = \tau_{j_1}$ must be at most $N_{A_n^{2}} \le A_n^{2}$.
Note that $\cS'$ is in the $\sigma$-algebra $\cF_{\alpha_1}$.

Our next lemma tells us how the set of blue vertices grow from excursion to excursion. Let $\Blue_j$ be the event that in step $\tau_j+1$, the oriented edge selected has tail blue. Said otherwise, the bad event $\mathfrak B$ is simply $\Blue_{J_{\max}}$.

\begin{lemma}\label{Bluej}
Conditioned on $\cF_{\tau_j}$ such that $\tau_j<\infty$, the distribution of the tail of the oriented edge selected in the next step is uniform over $\tip_{\tau_j}$. Consequently,
$$\Pt(\Blue_j|\cF_{\tau_j})\mathds1_{\tau_j<\infty} = \frac{|\cB_{\tau_j} \cap \Tip_{\tau_j}|} {\tip_{\tau_j}} \le \min\left\{\frac{B_{\tau_j}}{\tip_{\tau_j}},1\right\},$$
almost surely.
\end{lemma}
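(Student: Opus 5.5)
The plan is to identify the tail of the next exposed edge directly from the memoryless structure in \Cref{cor:cond_law}. At a time $\tau_j\in\hitpast$ a cycle is contracted, so the tip $X_{\tau_j}$ is a contracted vertex, i.e.\ $X_{\tau_j}\notin{\sf V}$ and $\tip_{\tau_j}>1$. The edge exposed in step $\tau_j+1$ is $\vec e_{\min}(X_{\tau_j},\tau_j)$. By part c.\ of \Cref{cor:cond_law}, conditioned on $\cF_{\tau_j}$ (together with the stage 1 conditioning built into $\Pt$), this edge is uniform over $\cO(X_{\tau_j},\tau_j)$. The first step is to check that $\cO(X_{\tau_j},\tau_j)$ consists of no exposed edges. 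The unique exposed edge of the contracted cycle has been removed, and $X_{\tau_j}$ is the boundary of the current arborescence, so it has no exposed outgoing edge.

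The second step is a counting argument. The edges in $\cO(X_{\tau_j},\tau_j)$ correspond bijectively to original oriented edges $(u,w)$ with $u\in\Tip_{\tau_j}$ and $w\notin\Tip_{\tau_j}$. Every such pair survives contraction, since only edges with both endpoints inside a contracted cycle are deleted. This gives exactly $n-\tip_{\tau_j}$ outgoing edges for each tail $u\in\Tip_{\tau_j}$, as already used in the proof of \Cref{lem:hitting_prob}. Since the count is the same for every $u$, a uniform edge has a tail in ${\sf V}$ that is uniform over $\Tip_{\tau_j}$. This is the first assertion of the lemma.

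The main thing to double-check is that the ${\sf f}$-restriction does not break uniformity. This is exactly the content of part c.\ versus part b.\ of \Cref{cor:cond_law}: the exclusion of edges into ${\sf f}$ applies only to singleton tips in ${\sf V}$, while for contracted tips all outgoing edges are allowed. For the same reason, the weight subtractions made on the contracted cycle do not bias the tail, because by memorylessness the subtracted weights of all unexposed edges are again i.i.d.\ Exponential$(1)$.

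Finally, $\Blue_j$ is the event that this uniform tail lies in $\cB_{\tau_j}$, so its conditional probability equals $|\cB_{\tau_j}\cap\Tip_{\tau_j}|/\tip_{\tau_j}$. Since $\cB_{\tau_j}$ is $\cF_{\tau_j}$-measurable by its inductive definition, this is a legitimate conditional probability. The stated bound follows from $|\cB_{\tau_j}\cap\Tip_{\tau_j}|\le\min\{B_{\tau_j},\tip_{\tau_j}\}$.
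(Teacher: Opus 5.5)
Your proposal is correct and follows the paper's argument. At $\tau_j$ the tip is a contracted vertex, so by part c.\ of \Cref{cor:cond_law} the next edge is uniform over all outgoing edges of $X_{\tau_j}$, and in the complete graph each $u\in\Tip_{\tau_j}$ contributes exactly $n-\tip_{\tau_j}$ of them, which forces the tail to be uniform; you spell out in more detail what the paper states in one line (no exposed outgoing edge at $X_{\tau_j}$, irrelevance of the ${\sf f}$-restriction for contracted tips, $\cF_{\tau_j}$-measurability of $\cB_{\tau_j}$).
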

\begin{proof}
    Observe that in step $\tau_j+1$, we select an edge uniformly among the set of edges whose tail is in $\cont(X_{\tau_j})  = \tip_{\tau_j}$ and head is outside $X_{\tau_j}$ (since by definition, $\tip_{\tau_j} >1$). The number of such edges is the same for every vertex in $\Tip_{\tau_j}$ since we are in the complete graph. The lemma follows. 
\end{proof}
We need the $\min$ in \Cref{Bluej} as if $\cV_{J_{\max}}$ occurs, then it could be the case that every vertex exposed is blue. Our next lemma gives a recursion controlling the growth of blue vertices.
\begin{lemma}\label{lem:recursion_B}
    The following holds almost surely 
    $$
    B_{\tau_{j+1}}\mathds1_{\tau_{j+1}<\infty, \cV_{j+1}^c} =B_{\tau_j}\mathds1_{ \tau_{j+1}<\infty, \cV_{j+1}^c} +( \tau_{j+1} - \tau_j)\mathds1_{\Blue_j,\tau_{j+1} < \infty, \cV_{j+1}^c}.
    $$
\end{lemma}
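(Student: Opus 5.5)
The plan is a case analysis on what happens in the excursion $(\tau_j,\tau_{j+1}]$, restricted to $\{\tau_{j+1}<\infty\}\cap\cV_{j+1}^c$. In each case I compare the inductive coloring rules with the two terms on the right-hand side.

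First, the excursions not yet in the colored regime. If $\tau_{j+1}\le\alpha_1$ is an excursion before $\alpha_1$, then $\cB_k={\sf f}$ on the whole excursion, so $B_{\tau_{j+1}}=B_{\tau_j}$. The claim then reduces to showing that the indicator term carries no mass there. The point to check is whether $\Blue_j$ can hold, i.e. whether the tail of the edge exposed at step $\tau_j+1$ can lie in ${\sf f}$. Before $\alpha_1$ no vertex of ${\sf f}$ has been contracted into the tip, so by definition $\Tip_{\tau_j}\cap{\sf f}=\emptyset$. By \Cref{Bluej} the tail is uniform on $\Tip_{\tau_j}$, so $\Pt(\Blue_j\mid\cF_{\tau_j})=0$ and $\Blue_j$ fails almost surely. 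The step $\tau_{j+1}=\alpha_1$ itself is excluded on $\cV^c_{j+1}$ unless $\tau_{j+1}=\tau_j+1$. In that case the coloring at $\alpha_1$ adds $\cN_{\alpha_1}\setminus\Tip_{\alpha_1-1}$, and I will check this against the rule separately.

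Next, the excursions after $\alpha_1$, split along the coloring rules. The rule for an immediate hit of ${\sf f}$ ($\tau_{j+1}=\tau_j+1\in\hittree$) leaves the coloring unchanged. On $\cV^c_{j+1}$, rule a.\ (ending in ${\sf f}$ after a nontrivial excursion) is excluded by definition of $\cV_{j+1}$. Rule b.\ applies exactly when $\Blue_j^c$ holds, and then $B_{\tau_{j+1}}=B_{\tau_j}$, matching the identity. Rule c.\ applies exactly on $\Blue_j$ and gives $B_{\tau_{j+1}}=B_{\tau_j}+|\cN_{\tau_{j+1}}\setminus\cN_{\tau_j}|$; here I use that the new vertices are disjoint from $\cB_{\tau_j}\subseteq\cN_{\tau_j}$.

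The remaining and main step is the counting identity $|\cN_{\tau_{j+1}}\setminus\cN_{\tau_j}|=\tau_{j+1}-\tau_j$ on $\Blue_j$. To prove it I will use \eqref{eq:Bj1}-type bookkeeping: each step of the excursion either exposes a brand-new vertex or hits an already exposed one. Hitting $\cN_{\tau_j}$ only occurs at the final step $\tau_{j+1}$, so I must control the steps that hit a vertex exposed inside the current excursion. This is the obstacle. Such internal hits contract cycles without adding vertices, and the final step adds no vertex either. So my first attempt will establish the inequality $|\cN_{\tau_{j+1}}\setminus\cN_{\tau_j}|\le\tau_{j+1}-\tau_j$ unconditionally, and then argue equality under the convention implicit in the statement.

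Concretely, I would check whether the intended reading counts the hit vertex at step $\tau_{j+1}$ and whether internal cycles are meant to be absorbed into $\hitpast$. If equality genuinely fails for excursions with internal cycles or for the immediate-hit case with $\Blue_j$, I will record the identity with $\tau_{j+1}-\tau_j$ replaced by $|\cN_{\tau_{j+1}}\setminus\cN_{\tau_j}|$, together with the resulting upper bound. The subsequent growth estimate for $B$ only uses this upper bound.
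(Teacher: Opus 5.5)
Your route is the same as the paper's: on $\{\tau_{j+1}<\infty\}\cap\cV^c_{j+1}$, rule a.\ is excluded, and you read off the increment from the immediate-hit rule and from rules b.\ and c. Your suspicion about the counting step is also correct. Do not look for a convention that rescues the equality, because there is none.

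Take $j\ge j_1$ and suppose $\Blue_j$ occurs. The increment of $B$ is $|\cN_{\tau_{j+1}}\setminus\cN_{\tau_j}|$ under rule c.\ (using $\cB_{\tau_j}\subseteq\cN_{\tau_j}$), and it is $0$ in the immediate-${\sf f}$-hit case. Every step $t\in(\tau_j,\tau_{j+1})$ does one of two things:
\begin{itemize}
\item it exposes exactly one new vertex; or
\item it hits $\cN_{t-1}\setminus\cN_{\tau_j}$.
\end{itemize}
Such internal hits are not in $\hitpast$, since $\tau_{j+1}$ is the first hit of $\cN_{\tau_j}$, not of $\cN_{t-1}$. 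The paper itself uses exactly these steps in the events $\cH_k$ of \Cref{lem:hk}. The final step $\tau_{j+1}$ lands in $\cN_{\tau_j}$ and adds nothing. Hence
$$|\cN_{\tau_{j+1}}\setminus\cN_{\tau_j}|\le\tau_{j+1}-\tau_j-1,$$
so the displayed equality fails on every outcome where $\Blue_j$ occurs. The paper's one-line justification (``it changes by exactly the length of the excursion'') glosses over this.

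What is true is your fallback: the identity holds with $|\cN_{\tau_{j+1}}\setminus\cN_{\tau_j}|\,\mathds1_{\Blue_j,\tau_{j+1}<\infty,\cV^c_{j+1}}$ as the last term. It yields the lemma with ``$\le$'' in place of ``$=$''. That is the only direction used in \Cref{lem:B_domination}, where one needs $\varphi_{j+1}\ge B_{\tau_{j+1}}\mathds1_{\cV^c_{j+1}}$. So you should state and prove the inequality, and nothing downstream changes.

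You also need to restrict the lemma to $j\ge j_1$, which is where $\cV_j$ is defined. Your plan to ``check separately'' the step $\tau_{j+1}=\alpha_1$ cannot succeed, because the identity is false there:
\begin{itemize}
\item On $\cV^c_{j_1}$ one has $\tau_{j_1}=\tau_{j_1-1}+1$.
\item The coloring at $\alpha_1$ adds $\cN_{\alpha_1}\setminus\Tip_{\alpha_1-1}$. This set contains, for example, $\rho_R\notin{\sf f}$ whenever $\rho_R$ has not yet been swallowed into the tip.
\item Meanwhile $\Blue_{j_1-1}$ is impossible, because $\Tip_{\tau_{j_1-1}}\cap{\sf f}=\emptyset$ before ${\sf f}$ is first hit.
\end{itemize}
So even the inequality fails at that step. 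This is why the paper starts the domination with $\tilde B_{j_1}=B_{\tau_{j_1}}\mathds1_{\cV^c_{j_1}}$ instead of applying the recursion there.

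A minor point: for excursions before $\alpha_1$ you do not need \Cref{Bluej}. $\Blue_j$ fails deterministically there, since the tip is disjoint from ${\sf f}=\cB_{\tau_j}$.
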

\begin{proof}
    This follows from the definition of the blue vertices. Indeed if both $\tau_{j+1}<\infty$ and $\cV_{j+1}^c$ occurs, then the only way for blue vertices to change is if the excursion started from a blue vertex, and it changes by exactly the length of the excursion, which is $\tau_{j+1}-\tau_j$. The lemma follows.
\end{proof}
\noindent
We wish to use the recursion of \Cref{lem:recursion_B} to prove an upper bound of $$B_{\tau_{J_{\max}}}\mathds1_{\cV^c_{J_{\max}}}.$$ 
To effectively deal with the stopping times, we introduce an auxiliary sequence $\tilde B_{j}$ defined as follows which will dominate $B_{\tau_j}\mathds1_{\cV^c_j}$. Recall $\tau_{j_1} = \alpha_1$ and define $\tilde B_{j_1} = B_{\tau_{j_1}}\mathds1_{\cV^c_{j_1}}$. Define $N_{\infty}  = N_{\tau_{J_{\max}}}$ and note that because of the way we set up our notations, $N_{\tau_j}= N_\infty=N_{\tau_{J_{\max}}}$ for all $j>J_{\max}$. Now define for $j \ge j_1$,
\begin{equation}
    \tilde B_{j+1} =\tilde B_j +  {\sf Ge}_{j+1}\mathds1_{\widetilde{\Blue}_j}, \label{eq:tildeB_recursion} 
\end{equation}
where 
${\sf Ge}_{j+1}$ is distributed as a Geometric random variable with parameter $(N_{\tau_j}-{|{\sf f}|})/n$
independent of everything else and $\mathds1_{\widetilde{\Blue}_j}$ is Bernoulli with parameter $\min(\tilde B_{j}/\tip_{\tau_j},1)$ independent from everything else. Observe that, as a matter of convenience, we allow $\tilde B_j$ to grow, even if $j >J_{\max}$.

Let $S_{\le k}:=(S_i)_{0 \le i \le k}$ denote the sequence of exposed edges in the local CLEB process up to step $k$ and recall that $\cF_k$ is simply the $\sigma$-algebra generated by $S_{\le k}$.
\begin{lemma}\label{lem:B_domination}
 $\tilde B_j$ stochastically dominates $B_{\tau_j}\mathds1_{\cV_j^c}$ for all $j \ge j_1$. 
\end{lemma}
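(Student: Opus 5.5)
We will prove the domination by an explicit coupling, by induction on $j \ge j_1$. The target statement is: there is a coupling of the local CLEB process with the auxiliary variables $({\sf Ge}_{j+1},\mathds1_{\widetilde{\Blue}_j})_{j\ge j_1}$ under which $B_{\tau_j}\mathds1_{\cV_j^c} \le \tilde B_j$ almost surely for every $j \ge j_1$. The base case $j=j_1$ holds by the definition $\tilde B_{j_1} = B_{\tau_{j_1}}\mathds1_{\cV^c_{j_1}}$.

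For the inductive step, work on $\cF_{\tau_j}$ with the coupling built up to index $j$ and $B_{\tau_j}\mathds1_{\cV_j^c}\le \tilde B_j$. If $\cV_{j+1}$ occurs or $\tau_{j+1}=\infty$, the left side at $j+1$ is $0$. Here we use that $\cV_j$ is increasing and that for $j>J_{\max}$ all quantities freeze at $B_\infty\mathds1_{\cV^c_{J_{\max}}}$, which is at most $\tilde B_j\le\tilde B_{j+1}$ since $\tilde B$ is nondecreasing. So it suffices to treat $\{\tau_{j+1}<\infty\}\cap\cV_{j+1}^c$, where \Cref{lem:recursion_B} gives
$$B_{\tau_{j+1}} = B_{\tau_j} + (\tau_{j+1}-\tau_j)\mathds1_{\Blue_j}.$$

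We couple the two increments separately, in temporal order.

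\emph{The Bernoulli.} By \Cref{Bluej}, conditionally on $\cF_{\tau_j}$ the event $\Blue_j$ has probability
$$\frac{|\cB_{\tau_j}\cap\Tip_{\tau_j}|}{\tip_{\tau_j}} \le \min\{B_{\tau_j}/\tip_{\tau_j},1\}.$$
On $\cV_j^c$ this is at most $\min\{\tilde B_j/\tip_{\tau_j},1\}$ by the induction hypothesis. Draw a uniform $U_j$ independent of the past, realize $\Blue_j$ as $\{U_j\le \Pt(\Blue_j\mid\cF_{\tau_j})\}$, and set $\widetilde{\Blue}_j=\{U_j\le \min(\tilde B_j/\tip_{\tau_j},1)\}$. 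Then $\mathds1_{\Blue_j}\le\mathds1_{\widetilde{\Blue}_j}$, and $\widetilde{\Blue}_j$ has the required conditional law. The conditional law of the tail of the next edge given $\Blue_j$ is then sampled from $\Pt$, so the process law is preserved.

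\emph{The geometric.} The excursion length $\tau_{j+1}-\tau_j$ comes from the step-by-step construction in the proof of \Cref{lem:interval_small}. At each step $k\in[\tau_j,\tau_{j+1})$, the conditional probability given $\cF_k$ of ending the excursion is at least $p_j:=(N_{\tau_j}-|{\sf f}|)/n$, which is $\cF_{\tau_j}$-measurable. Use fresh uniforms $V_k$ at each step. Declare a success of the geometric trial whenever $V_k\le p_j$, and realize the true hitting event as $\{V_k\le \Pt(\text{hit }\cN_{\tau_j}\mid\cF_k)\}$, with the remaining edge choice sampled from its conditional law. Then the excursion ends no later than the first geometric success, so $\tau_{j+1}-\tau_j\le {\sf Ge}_{j+1}$ with ${\sf Ge}_{j+1}\sim$ Geometric$(p_j)$. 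To make ${\sf Ge}_{j+1}$ defined even when the walk stops (at $\tauend$ or $j>J_{\max}$), continue the trials with independent uniforms. The $U_j$'s and $V_k$'s together determine ${\sf Ge}_{j+1}$ and $\widetilde{\Blue}_j$, and these are independent of each other and of $\cF_{\tau_j}$ given $p_j$ and $\tilde B_j$, as required in \eqref{eq:tildeB_recursion}.

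Combining the two increments gives $B_{\tau_{j+1}}\mathds1_{\cV^c_{j+1}}\le \tilde B_j+{\sf Ge}_{j+1}\mathds1_{\widetilde{\Blue}_j}=\tilde B_{j+1}$, which closes the induction.

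The main obstacle is bookkeeping rather than estimation. First, the parameters $p_j$ and $\tip_{\tau_j}$ are random and depend on the process, so "independent of everything else" must be read as conditionally independent given $\cF_{\tau_j}$; the stochastic domination then holds for the joint law, which is all that is used later. Second, we must check that the monotone couplings are legitimate, i.e.\ that at each step the true conditional probabilities dominate or are dominated as claimed. That is exactly what \Cref{Bluej}, the hitting bound of \Cref{lem:hitting_prob}, and the monotonicity of $N_{\tau_j}$ in $j$ supply.
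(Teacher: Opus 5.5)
Your argument is the paper's argument, written out more carefully. It is an almost-sure coupling built by induction on $j$. The frozen regime $j\ge J_{\max}$ is handled by monotonicity of $\tilde B$; there the left side is frozen at $B_{\tau_{J_{\max}}}\mathds1_{\cV^c_{J_{\max}}}$ rather than equal to $0$, as your next sentence in effect says. On $\{\tau_{j+1}<\infty\}\cap\cV^c_{j+1}$ you use the recursion of \Cref{lem:recursion_B}, a common uniform giving $\mathds1_{\Blue_j}\le\mathds1_{\widetilde{\Blue}_j}$ via \Cref{Bluej} and the induction hypothesis, and a trial-by-trial domination of the excursion length. The Bernoulli step is fine, and so is your reading of ``independent of everything else'' as conditional independence given the past.

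\textbf{The gap.} The step that fails is the one you import from the proof of \Cref{lem:interval_small}, so it fails in the paper as well. For $k=\tau_j$, the conditional probability of ending the excursion at step $k+1$ is \emph{not} at least $p_j=(N_{\tau_j}-|{\sf f}|)/n$.
\begin{itemize}
\item In step $\tau_j+1$ the walk leaves $X_{\tau_j}$, whose content already contains almost all of $\cN_{\tau_j}$. It ends the excursion only by landing in $\cN_{\tau_j}\setminus\Tip_{\tau_j}$, which has conditional probability $|\cN_{\tau_j}\setminus\Tip_{\tau_j}|/(n-\tip_{\tau_j})$.
\item For $\tau_j\ge\alpha_1$ this set lies inside ${\sf f}$. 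It is empty if, say, ${\sf t}$ consists only of the path $\cN_{R+1}$ (so ${\sf f}=\cN_R$) and $\alpha_1$ hits the vertex $1$.
\item In that case every excursion after $\alpha_1$ has length at least $2$. Meanwhile your ${\sf Ge}_{j+1}$ equals $1$ with probability $p_j$, which is at least of order $\eps_n$ on $\cS'$.
\end{itemize}
So the pathwise claim $\tau_{j+1}-\tau_j\le{\sf Ge}_{j+1}$ is false. In this situation even the conditional domination fails at $j=j_1+1$. On $\cS'\cap\{N_{\alpha_1}\le n/2\}$, conditionally on $\cF_{\alpha_1}$:
\begin{itemize}
\item $B_{\tau_{j_1+1}}\mathds1_{\cV^c_{j_1+1}}\ge B_{\alpha_1}+2$ whenever the excursion starts blue and returns to $\cN_{\alpha_1}$ without triggering $\cV_{j_1+1}$. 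This has probability $\frac{B_{\alpha_1}}{\tip_{\alpha_1}}\bigl(1-O(1/(n\eps_n))\bigr)$.
\item $\tilde B_{j_1+1}\ge B_{\alpha_1}+2$ has only probability $\frac{B_{\alpha_1}}{\tip_{\alpha_1}}(1-p_{j_1})$.
\end{itemize}
Hence no coupling with the prescribed conditional laws can exist for the lemma exactly as stated.

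\textbf{The repair.} It is cheap. For $k\in(\tau_j,\tau_{j+1})$ one has $\Tip_k\cap\cN_{\tau_j}=\emptyset$, and then \Cref{lem:hitting_prob} does give conditional probability at least $p_j$ of hitting $\cN_{\tau_j}$ at step $k+1$. So start your Bernoulli$(p_j)$ trials at $k=\tau_j+1$. This gives $\tau_{j+1}-\tau_j\le 1+{\sf Ge}_{j+1}$, i.e.\ the lemma holds with ${\sf Ge}_{j+1}$ replaced by $1+{\sf Ge}_{j+1}$ in \eqref{eq:tildeB_recursion}. Since $1+n/(N_{\tau_j}-|{\sf f}|)\le 2n/(N_{\tau_j}-|{\sf f}|)$, this only turns $n/(N_{\alpha_1}-|{\sf f}|)^2$ into $2n/(N_{\alpha_1}-|{\sf f}|)^2$ in \Cref{lem:tilde_recursion}, and \Cref{lem:bound_blue} is unaffected.

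\textbf{A bookkeeping point.} Suppose the excursion ends at a step where your trial failed. Then you must complete ${\sf Ge}_{j+1}$ at time $\tau_{j+1}$ with separate independent uniforms (by memorylessness), not with the uniforms that drive the next excursion. Otherwise $\tilde B_{j+1}$ is not yet determined when $\widetilde{\Blue}_{j+1}$ has to be coupled at step $\tau_{j+1}+1$, and ${\sf Ge}_{j+2}$ is no longer conditionally independent of $\tilde B_{j+1}$. Your remark about continuing with independent uniforms covers only the case where the walk stops.
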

\begin{proof}
We prove this by induction on $j$.
Let $U_1,U_2,\ldots$ be a sequence of independent Unif$[0,1]$ random variables independent of everything else. Let $U_{\le j}$ denote the sequence $(U_1,\dots, U_j)$.
    Suppose we want to construct a measurable function $\varphi_j$ such that  $$\tilde B_j = \varphi_j(S_{\le \tau_j},U_{\le j}) \ge B_{\tau_j}\mathds1_{\cV^c_{\tau_j}},$$ almost surely where the first equality is in distribution. The base case $j=j_1$ is trivial as $\tilde B_{j_1} = B_{\tau_{j_1}}\mathds1_{\cV^c_{j_1}}$. Now suppose for some $j \ge j_1$, we have constructed $\varphi_j$. We consider two situations separately: $j\ge J_{\max}$ or $j < J_{\max}$. If $j\ge J_{\max}$,  we use $U_{\tau_{j+1}}$ to simply construct $\mathds1_{\widetilde{\Blue}_{j}}$
 and ${\sf Ge}_{j+1}$ independent of everything else, which defines $\varphi_{j+1}$ (this is a standard construction).   Observe that this ensures $\varphi_{j+1}(S_{\le\tau_{j+1}}, U_{\le j+1}) = \tilde B_{j+1}$ in distribution. Thus on the event $j \ge J_{\max}$,
    $$
    \varphi_{j+1}(S_{\le\tau_{j+1}}, U_{\le j+1})  \ge \varphi_j(S_{\le \tau_j},U_{\le j}) \ge B_{\tau_{J_{\max}}}\mathds1_{\cV^c_{J_{\max}}}  =B_{\tau_{j+1}}\mathds1_{\cV^c_{j+1}},
    $$
    almost surely. The first inequality is trivial as we have added some non-zero random variable and the second inequality is our induction hypothesis. The last equality follows from the convention that $B_{\tau_j} = B_{\tau_{J_{\max}}}$ and $\cV_j^c = \cV^c_{J_{\max}}$ for all $j \ge J_{\max}$.
    Now suppose we are on the event $j<J_{\max}$ 
    and now we use the expression on the right hand side of the recursion in \Cref{lem:recursion_B}. Note
    $$
    B_{\tau_j}\mathds1_{\tau_{j+1}<\infty}\mathds1_{\cV_{j+1}^c} \le B_{\tau_j}\mathds1_{\tau_j<\infty}\mathds1_{\cV_{j}^c} \le \varphi_j(S_{\le \tau_j},U_{\le j}),
    $$
    almost surely by our induction hypothesis and since $\cV_j$ is decreasing. Conditioned on $\cF_{\tau_j}$, $\mathds1_{\Blue_j}$ is stochastically dominated by $\mathds1_{\widetilde{\Blue}_j}$ since $B_{\tau_j}\mathds1_{\cV_j^c}$ is stochastically dominated by $\tilde B_j $. Thus we can construct a function $\tilde \varphi_{j+1}(S_{\le \tau_{j+1}}, U_{\le j+1})$ so that 
      $$
      ( {\sf Ge}_{j+1})\mathds1_{\widetilde{\Blue}_j} = \tilde \varphi_{j+1}(S_{\le \tau_{j+1}}, U_{\le j+1})  \ge ( \tau_{j+1} - \tau_j)\mathds1_{\Blue_j,\tau_{j+1} < \infty, \cV_{j+1}^c},
      $$
     almost surely, where the first equality is in distribution. Now define 
     $$
     \varphi_{j+1}(S_{\le \tau_{j+1}}, U_{\le j+1}) = \varphi_j(S_{\le \tau_j},U_{\le j}) + \tilde \varphi_{j+1}(S_{\le \tau_{j+1}}, U_{\le j+1}).
     $$
     This completes the proof of the coupling and the lemma.
\end{proof}
\begin{lemma}\label{lem:tilde_recursion}
We have 
$$\Et(\tilde B_j|\cF_{\alpha_1})\le\tilde{B}_{j_1}\left(1+\frac{n}{(N_{\alpha_1}-|{\sf f}|)^2}\right)^{j-j_1},$$
almost surely, for all $j>j_1$.
\end{lemma}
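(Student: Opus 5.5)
We will prove the bound by a one-step conditional expectation estimate followed by induction on $j$, always conditioning on $\cF_{\alpha_1}$ and integrating out the auxiliary randomness. From the recursion \eqref{eq:tildeB_recursion}, ${\sf Ge}_{j+1}$ is independent of everything else with parameter $(N_{\tau_j}-|{\sf f}|)/n$. The indicator $\mathds1_{\widetilde{\Blue}_j}$ is Bernoulli$(\min(\tilde B_j/\tip_{\tau_j},1))$, again independent given the past. Conditioning on $\tilde B_j$ and on the process up to $\tau_j$ therefore gives
\[
\Et\left(\tilde B_{j+1}\,\middle|\,\tilde B_j,\cF_{\tau_j}\right)\le \tilde B_j + \frac{n}{N_{\tau_j}-|{\sf f}|}\cdot\frac{\tilde B_j}{\tip_{\tau_j}} .
\]
The geometric random variable has mean $n/(N_{\tau_j}-|{\sf f}|)$, and we discard the $\min$ with $1$.

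The key step is to bound the factor $n/\big((N_{\tau_j}-|{\sf f}|)\tip_{\tau_j}\big)$ by the deterministic-given-$\cF_{\alpha_1}$ quantity $n/(N_{\alpha_1}-|{\sf f}|)^2$. For this we use the snowball structure. For $j\ge j_1$, i.e.\ $\tau_j\ge\alpha_1$, all vertices exposed up to $\tau_j$ other than those of ${\sf f}$ sitting in ${\sf t}_{\tau_j}$ have been swallowed into the tip. Hence $\tip_{\tau_j}\ge N_{\tau_j}-|{\sf f}|$. In addition, $N_{\tau_j}$ is nondecreasing in $j$, so $N_{\tau_j}\ge N_{\alpha_1}$.

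For $j>J_{\max}$ we use the conventions $N_{\tau_j}=N_\infty$ and $\tip_{\tau_j}=\tip_{\tau_{J_{\max}}}$, so the same inequalities persist. Together these give $(N_{\tau_j}-|{\sf f}|)\tip_{\tau_j}\ge (N_{\alpha_1}-|{\sf f}|)^2$. Consequently
\[
\Et(\tilde B_{j+1}\mid \tilde B_j,\cF_{\tau_j})\le \tilde B_j\Big(1+\tfrac{n}{(N_{\alpha_1}-|{\sf f}|)^2}\Big).
\]
We then take conditional expectation with respect to $\cF_{\alpha_1}$, using the tower property since $\cF_{\alpha_1}\subseteq\sigma(\tilde B_j,\cF_{\tau_j})$ and the multiplier is $\cF_{\alpha_1}$-measurable. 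Iterating from $j_1$, where $\tilde B_{j_1}$ is $\cF_{\alpha_1}$-measurable, yields the claim.

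The main obstacle is justifying $\tip_{\tau_j}\ge N_{\tau_j}-|{\sf f}|$ rigorously at every $\tau_j\ge\alpha_1$. The paper's discussion only indicates this: each $\tau_j$ after $\alpha_1$ hits either $\cont(X_{\tau_{j-1}})$ or ${\sf f}$, so everything exposed since is absorbed. The point is that at $\alpha_1$ a cycle through ${\sf f}$ closes, and that cycle must contain all of $\cN_{\alpha_1}\setminus{\sf f}$ because the walk is a single path from $\rho_R$ whose earlier cycles were already contracted into the tip. We also need to handle the degenerate event that $\tilde B_j$ is nonzero while the natural process has stopped. There the conventions freeze $N$ and $\tip$, and the bound still holds, since both quantities are at their final values, which are at least those at $\alpha_1$.
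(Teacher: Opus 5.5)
Your proposal is correct and follows essentially the same route as the paper: a one-step bound from \eqref{eq:tildeB_recursion} (mean of the geometric times the Bernoulli parameter), the snowball lower bound on the tip to replace $N_{\tau_j}-|{\sf f}|$ and $\tip_{\tau_j}$ by $N_{\alpha_1}-|{\sf f}|$, and then the tower property and iteration from $j_1$. The only differences are cosmetic. You use $\tip_{\tau_j}\ge N_{\tau_j}-|{\sf f}|$ directly, where the paper combines $\Tip_{\alpha_1}\subseteq\Tip_{\tau_j}$ with $\tip_{\alpha_1}\ge N_{\alpha_1}-|{\sf f}|$, and you condition on $\sigma(\tilde B_j,\cF_{\tau_j})$, which is slightly more careful than the paper's conditioning on $\cF_{\tau_{j-1}}$ alone.
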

\begin{proof}
This follows essentially by iterating \Cref{eq:tildeB_recursion}.  Indeed, for every $\tau_j>\tau_{j_1}=\alpha_1$, all the vertices in $\Tip_{\alpha_1}$ are swallowed into $\Tip_{\tau_j}$. Therefore,
\begin{multline*}
\Et(\tilde{B}_j|\cF_{\tau_{j-1}})\mathds1_{j >j_1}\le\left(\tilde{B}_{j-1}+\frac{n}{N_{\tau_{j-1}}-|{\sf f}|}\frac{\tilde{B}_{j-1}}{\tip_{\tau_{j-1}}}\right)\mathds1_{j >j_1}\le \left(\tilde{B}_{j-1}+\frac{n}{N_{\alpha_1}-|{\sf f}|}\frac{\tilde{B}_{j-1}}{\tip_{\alpha_1}}\right)\mathds1_{j >j_1}\\\le \left(\tilde{B}_{j-1}+\frac{n}{N_{\alpha_1}-|{\sf f}|}\frac{\tilde{B}_{j-1}}{N_{\alpha_1}-|{\sf f}|}\right)\mathds1_{j >j_1},
\end{multline*}
almost surely, 
where for the first inequality we used \Cref{lem:interval_small}, the third inequality is due to the fact that $N_{\alpha_1}-|{\sf f}|\le\tip_{\alpha_1}$. Indeed, at time $\alpha_1$, all the vertices exposed between $\tauendone$ and $\alpha_1 $ is swallowed into the tip, except maybe some vertices of ${\sf f}$.
Taking expectation on both sides conditioned on $\cF_{\alpha_1}$, using the tower property, and iterating, we have our result.
\end{proof}
We now bound the number of blue vertices. Recall $\log(\log n)=A_n = \eps_n^{-1}$.
\begin{lemma}\label{lem:bound_blue}
    We have 
    $$
\Pt(B_{\tau_{J_{\max}}}\mathds1_{\cV^c_{J_{\max}}} >\exp(A_n^4)) \to 0.
    $$ 
\end{lemma}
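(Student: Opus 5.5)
We use the domination $B_{\tau_j}\mathds1_{\cV_j^c}\preceq \tilde B_j$ from \Cref{lem:B_domination} together with the moment growth in \Cref{lem:tilde_recursion}. The plan has three steps. First, we work on a good event on which the starting point $\tilde B_{j_1}$ is small and the growth factor is close to $1$. Second, we show that the number of excursions after $\alpha_1$ is not too large. Third, we apply Markov's inequality.

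\textbf{Step 1: the good event.} Take $\cS'=\tilde\cS\cap\cV^c_{j_1}$ from \eqref{eq:Balpha_small}. It is $\cF_{\alpha_1}$-measurable, and by \eqref{eq:tildecs} and \Cref{lem:hk} it satisfies $\Pt(\cS')\to1$. On $\cS'$ we have $\tilde B_{j_1}=B_{\alpha_1}\le A_n^2$. Moreover $\alpha_1>\tau_I$, so $N_{\alpha_1}\ge N_{\tau_I}\ge n\eps_n/2$. Hence the factor in \Cref{lem:tilde_recursion} satisfies
\[
1+\frac{n}{(N_{\alpha_1}-|{\sf f}|)^2}\le 1+\frac{C A_n^2}{n}.
\]

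\textbf{Step 2: bounding the number of excursions.} We need a deterministic bound $m_n$ on $J_{\max}-j_1$, holding with high probability. Every excursion contains at least one step, so $J_{\max}-j_1\le \tauend-\tauendone$. By \Cref{lem:total_steps}, $\tauend-\tauendone$ is dominated by a Geometric$(1/n)$ random variable. Therefore
\[
\Pt\bigl(J_{\max}-j_1>A_n n\bigr)\le (1-1/n)^{A_n n}\le e^{-A_n}\to0.
\]
Set $m_n=A_n n$. Since $\tilde B_j$ is nondecreasing in $j$ by construction, on $\{J_{\max}-j_1\le m_n\}$ we have $B_{\tau_{J_{\max}}}\mathds1_{\cV^c_{J_{\max}}}\le \tilde B_{j_1+m_n}$ in the coupling. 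To make this rigorous, use the coupling constructed in \Cref{lem:B_domination}, in which $\varphi_j\ge B_{\tau_j}\mathds1_{\cV_j^c}$ almost surely and $\varphi_j$ is monotone in $j$. Then evaluate at the deterministic index $j_1+m_n$ (relative to $j_1$). Because $\tilde B$ keeps growing past $J_{\max}$, this index choice is harmless.

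\textbf{Step 3: Markov's inequality.} Conditioning on $\cF_{\alpha_1}$ and applying \Cref{lem:tilde_recursion} on $\cS'$ gives
\[
\Et\bigl(\tilde B_{j_1+m_n}\mid\cF_{\alpha_1}\bigr)\mathds1_{\cS'}\le A_n^2\Bigl(1+\frac{CA_n^2}{n}\Bigr)^{A_n n}\le A_n^2\exp\bigl(CA_n^3\bigr).
\]
Markov's inequality then yields
\[
\Pt\bigl(B_{\tau_{J_{\max}}}\mathds1_{\cV^c_{J_{\max}}}>e^{A_n^4}\bigr)\le A_n^2e^{CA_n^3-A_n^4}+\Pt\bigl((\cS')^c\bigr)+\Pt\bigl(J_{\max}-j_1>m_n\bigr)\to0.
\]

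\textbf{Main obstacle.} The delicate step is that \Cref{lem:tilde_recursion} is applied at a random index $j$. One must check that its conditional expectation bound is valid for a deterministic offset $m_n$ past the random time $j_1$. One must also check that on $\cS'$ the lower bound $N_{\alpha_1}-|{\sf f}|\ge n\eps_n/4$ feeds correctly into the geometric parameters used for all later $j$; this holds because $N_{\tau_j}$ is nondecreasing. If conditioning on $\cF_{\alpha_1}$ with random $j_1$ causes measurability trouble, the fallback is to index the recursion from $j_1$ by a strong Markov-type argument, replacing $j$ by $j_1+i$ and inducting on $i$.
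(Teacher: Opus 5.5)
Your proposal is correct and follows essentially the same route as the paper: restrict to the $\cF_{\alpha_1}$-measurable event $\cS'$, use \Cref{lem:total_steps} to cap the number of excursions by $nA_n$ with probability at least $1-e^{-A_n}$, and apply Markov's inequality to the dominating sequence $\tilde B$ via \Cref{lem:tilde_recursion}, which gives a bound of order $A_n^2\exp(O(A_n^3)-A_n^4)$. The only differences are cosmetic: you evaluate $\tilde B$ at the offset index $j_1+m_n$ rather than at the paper's absolute index $nA_n$, and you explicitly invoke \Cref{lem:hk} to control the $\cV_{j_1}$ part of $(\cS')^c$, a step the paper's citation glosses over.
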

\begin{proof}
    Note by \Cref{lem:total_steps}, 
    $$
    \Pt(\tauend-\tauendone > n A_n) \le e^{-A_n}.
    $$
    Also recall the event $\cS'$ from \eqref{eq:Balpha_small}. Recall 
    $$
    \Pt((\cS')^c ) =O(|{\sf f}|\eps_n),
    $$
    using \Cref{lem:cS,lem:alpha1}. By the stochastic domination proved in \Cref{lem:B_domination}, for any $A>0$,
    $$   \Pt(B_{\tau_{J_{\max}}}\mathds1_{\cV^c_{J_{\max}}} >A, \tauend-\tauendone \le  nA_n) \le \Pt(\tilde B_{nA_n} >A ) \le \Pt(\tilde B_{nA_n}\mathds1_{\cS'} >A) +\Pt((\cS')^c),
    $$
where for the first inequality we used the trivial inequality $J_{\max} \le \tau_{J_{\max}} \le \tauend$.
    By Markov's inequality, 
    \begin{equation}
        \Pt(\tilde B_{nA_n}\mathds1_{\cS'} >A ) \le A^{-1}\Et(\tilde B_{nA_n}\mathds1_{\cS'}).
    \end{equation}
    Now recall that $\cS'$ is $\cF_{\alpha_1}$-measurable. 
By \Cref{lem:tilde_recursion}, 
\begin{equation*}
    \Et(\tilde B_{nA_n}\mathds1_{\cS'}) =\Et(\Et(\tilde B_{nA_n}|\cF_{\alpha_1})\mathds1_{\cS'}) \le \Et\left( \tilde B_{j_1}\left(1+\frac{n}{(N_{\alpha_1}- |{\sf f}|)^2}\right)^{nA_n}\mathds1_{\cS'}\right). 
\end{equation*}
Now recall from \eqref{eq:Balpha_small}
$$
\cS' \subseteq \{\cB_{\alpha_1} \subseteq \cN_{A_n^2}\}\text{ and }\cS' \subseteq \left\{N_{\alpha_1} \ge \frac{n \eps_n}{2}\right\}.
$$
Since $\tilde B_{j_1} \le B_{\alpha_1}$, see that 
\begin{equation*}
  \Et(\tilde B_{nA_n}\mathds1_{\cS'}) \le A_n^{2}\left(1+O\left(\frac{1}{n\eps_n^2}\right)\right)^{nA_n} \le A_n^{2}\exp(O(A^{3}_n)) 
\end{equation*}
Plugging it back in,
\begin{equation*}
    \Pt(\tilde B_{nA_n}\mathds1_{\cS'} >A ) \le A^{-1}A_n^{2}\exp(O(A^{3}_n)).
\end{equation*}
Plugging $A = \exp(A_n^{4})$, we are done.
\end{proof}
Thus we have the corollary
\begin{corollary}\label{cor:blue_bound}
We have 
$$
\Pt(B_{\tau_{J_{\max}}} > \exp(A_n^{4}) ) \to 0.
$$   
\end{corollary}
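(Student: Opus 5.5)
The corollary follows from \Cref{lem:bound_blue} together with \Cref{lem:hk}, by a union bound that removes the indicator $\mathds1_{\cV^c_{J_{\max}}}$. Write
\[
\{B_{\tau_{J_{\max}}} > \exp(A_n^4)\} \subseteq \{B_{\tau_{J_{\max}}}\mathds1_{\cV^c_{J_{\max}}} > \exp(A_n^4)\} \cup \cV_{J_{\max}}.
\]
On $\cV^c_{J_{\max}}$ the indicator equals one, so the first event coincides with the original one there. On $\cV_{J_{\max}}$ we have no control of the blue set, but this event is small. Hence
\[
\Pt(B_{\tau_{J_{\max}}} > \exp(A_n^4)) \le \Pt(B_{\tau_{J_{\max}}}\mathds1_{\cV^c_{J_{\max}}} > \exp(A_n^4)) + \Pt(\cV_{J_{\max}}).
\]
The first term tends to $0$ by \Cref{lem:bound_blue}, and the second tends to $0$ by \Cref{lem:hk}.

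The only bookkeeping point is the conventions when $\alpha_1=\infty$ or $J_{\max}$ is small. If $\alpha_1=\infty$, then $\cB_k={\sf f}$ throughout, so $B_{\tau_{J_{\max}}}=|{\sf f}|$, which is fixed and far below $\exp(A_n^4)$. This case is therefore trivially covered, provided $\cV_{J_{\max}}$ is read as empty when $j_1$ is undefined. Similarly, $B_\infty$ is defined as $B_{\tau_{J_{\max}}}$ via the stated conventions, so no other cases arise. I expect no real obstacle: the substantive work is already done in \Cref{lem:hk} and \Cref{lem:bound_blue}.
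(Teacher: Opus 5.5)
Your proposal is correct and matches the paper's argument: the paper proves the corollary by combining \Cref{lem:hk} and \Cref{lem:bound_blue}, which is exactly your union bound over $\cV_{J_{\max}}$ and its complement. Your remark on the case $\alpha_1=\infty$ is also fine, since there $B_{\tau_{J_{\max}}}=|{\sf f}|$ is fixed, so the event is empty for large $n$ whatever convention is used for $\cV_{J_{\max}}$.
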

\begin{proof}
    This follows from \Cref{lem:hk,lem:bound_blue}.
\end{proof}
\noindent
Recalling that $A_n = \log \log(n)$, we conclude that the number of blue vertices is at most $\exp(A_n^4)$ with high probability when we start the final excursion.

For any vertex $v$ in the tip, let $\cK_v = \cK_{v}(k)$ be the event that in step $k$ we select an oriented edge whose tail is $v$. Let $S_{\ge k}$ denote the full trajectory $(S_i)_{k \le i \le \tauend}$ after step $k$. Let $\cF_k'$ be the sigma algebra generated by $\cF_k$ and the selection of the tail of the oriented edge out of $\Tip_k$ which is to be selected.
\begin{lemma}\label{lem:future_couple}
For any $k \ge 1$, and $u,v \in \Tip_k$, the distribution of $S_{\ge k+2}$ conditioned on $\cF_k'$ on the event $\cK_v(k+1)$ is the same as that of $\cS_{\ge k+2}$ on the event $\cK_u(k+1)$. 
\end{lemma}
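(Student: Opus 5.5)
The idea is to show that, after step $k+1$, the tail of the chosen edge inside $X_k$ is invisible to the CLEB walk. Only two things drive the future: which head in the contracted graph $G_{k+1}$ is selected, and the conditional law of the weights at that moment. I would construct an explicit measure-preserving bijection between the configurations on $\cK_v(k+1)$ and those on $\cK_u(k+1)$. Then I would check that the future trajectory $S_{\ge k+2}$, viewed as a sequence of edges in the successive contracted graphs, is carried along unchanged, up to relabelling which original oriented edge represents each step.

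The first step is to describe the step-$(k+1)$ choice. By \Cref{cor:cond_law} parts b.\ and c., and the computation in the proof of \Cref{Bluej}, $\vec e_{\min}(X_k,k)$ is uniform on $\cO(X_k,k)$, minus $\cO(X_k,k,{\sf f})$ when $\tip_k=1$. In the complete graph, each $w\in\Tip_k$ contributes exactly one outgoing edge to every vertex outside $\Tip_k$. So the pair (tail in $\Tip_k$, head in ${\sf V}\setminus\Tip_k$) is uniform on a product set. Hence, conditionally on $\cF_k'$ and on the tail being $v$ (respectively $u$), the head has the same law. The map $\phi$ that sends the edge $(v,y)$ to $(u,y)$ is a bijection between the admissible edge sets.

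The second step is to handle the weights. Conditionally on $\cF_k$, the $W_k$-weights on $\cO(X_k,k,S_k^c)$ are i.i.d.\ Exponential$(1)$, by \Cref{cor:cond_law}. Swapping the weights of $(v,y)$ and $(u,y)$ for every $y$ preserves this joint law. It maps $\cK_v(k+1)$ onto $\cK_u(k+1)$, keeps $\pi_{X_k}$ unchanged, and so after the subtraction in step (b) it gives identical weight configurations up to this relabelling. All other weights, which are on edges not leaving $X_k$, are untouched.

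The third step is to show that the future agrees. From time $k+1$ on, the algorithm acts on $G_{k+1}$. In $G_{k+1}$ the exposed edge goes from $X_k$ to the head $y$, and every later step depends only on the contracted-graph structure and the weights on its edges. Two points need checking. First, whether a cycle is created at step $k+1$ depends only on the head $y$. Second, contracted vertices have $\cont$ sets that do not depend on the tail, since the tail is already inside $X_k$. So the image of the configuration under the swap produces the same sequence of exposed edges in the contracted graphs, except that the step-$(k+1)$ edge is represented by $(u,y)$ instead of $(v,y)$. Identifying $S_{\ge k+2}$ through contracted-graph heads and tails, which is how the statement should be read via \Cref{rmk:edge_ambiguity}, gives equality in law.

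I expect the main obstacle to be the bookkeeping. I need to verify that later steps whose tails lie in vertices containing $X_k$ treat $v$- and $u$-edges symmetrically, and that $\cF_k'$ contains no information that breaks the symmetry. I would handle this by applying the same weight swap to all of $\cO(v)$ and $\cO(u)$ restricted to heads outside $\Tip_k$, and then checking invariance inductively using the uniformity statements of \Cref{cor:cond_law}.
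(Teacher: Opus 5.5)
Your proposal is correct and follows essentially the paper's coupling argument: match the head selected at step $k+1$ for the two tails, then let the two futures coincide. You make this explicit through a measure-preserving transposition of the exponential weights out of $\Tip_k$. A small refinement: transposing only the weights of $(v,y)$ and $(u,y)$ for the selected head $y$ makes the newly exposed edges after step $k+1$ agree exactly rather than up to the relabelling $u\leftrightarrow v$, because neither edge can be exposed again (any later cycle through $X_k$ absorbs $y$); your version already suffices for the application, where $\{j=J_{\max}\}$ depends only on the heads.
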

\begin{proof}
Suppose $\tip_k>1$ otherwise there is nothing to prove.
    Conditioned on $\cK_u(k+1)$, the edge selected in step $k+1$ is uniform over all edges outgoing from $u$ and the same is true for $v$. Since we are in the complete graph, for every $z \not \in \Tip_k$, there is an oriented edge whose head is outside $\Tip_k$ and tail is $u$ and the same is true for $v$. Thus the number of outgoing edges from $v$ we can choose is the same as that of $u$. Therefore, respectively for the conditional law conditioned on $\cK_u(k+1)$ and $\cK_v(k+1)$, we can couple this choice so that the head of the selected  edge is the same for both $u$ and $v$. After this we can couple the whole trajectories $S_{\ge k+2}$ to be exactly equal. Note that in every step of this coupling the set $(\cN_{j})_{j \ge 1}$ is exactly the same for both trajectories, so there is no issue with creating a cycle in one but not in the other.
\end{proof}

\begin{proof}[Proof of \Cref{prop:bad_B}]
Recall that $\mathfrak B  $ is the same event as $\Blue_{J_{\max}}$ (the last excursion starts from a blue vertex). Therefore it is enough to show that $\Pt(\Blue_{J_{\max}}) \to 0$.

    Recall that $S_{\le k}:=(S_i)_{0 \le i \le k}$ denoted the sequence of exposed edges in the local CLEB process. Observe that for any $u \in \Tip_{\tau_j}$ 
    \begin{equation*}    \Pt(\cK_u(\tau_j+1)|j=J_{\max},S_{\le \tau_j}) = \frac{\Pt(j=J_{\max}|\cK_u(\tau_j+1), S_{\le \tau_j} )\Pt(\cK_u(\tau_j+1)|S_{\le \tau_j})\mathds1_{\tau_j <\infty}}{\Pt(j=J_{\max}|S_{\le \tau_j})\mathds1_{\tau_j <\infty}}.
    \end{equation*}
    Using \Cref{lem:future_couple}, the first term in the numerator above does not depend on $u$ and by \Cref{Bluej}, the second term in the numerator is also independent of $u$. Therefore, conditionally on $S_{\le \tau_j}$ and the event $\{j = J_{\max}\}$, the tail of the next oriented edge selected is uniform over $\Tip_{\tau_j}$. Consequently,

    $$
   \Pt(\Blue_{J_{\max}}) = \Et(\Pt(\Blue_j|j=J_{\max},S_{\le \tau_j}))   \le\Et\left(\min\left(\frac{B_{\tau_{J_{\max}}}}{\tip_{\tau_{J_{\max}}}},1\right)\right).
    $$
Now observe that on the event $\alpha_1=\infty$, there are never any blue vertices in the tip, so the above expectation is 0. On the other hand, if $$\cG:=\{\alpha_1<\infty, N_{\alpha_1} \ge n\eps_n,B_{\tau_{J_{\max}}}\le\exp(A_n^{4})  \},$$ happens we have 
$$
\tip_{\tau_{J_{\max}}} \ge N_{\tau_{J_{\max}}} - |{\sf f}| \ge n\eps_n - |{\sf f}|.
$$
Thus, 
$$
\Et\left(\min\left(\frac{B_{\tau_{J_{\max}}}}{\tip_{\tau_{J_{\max}}}},1\right)\mathds1_{\cG}\right) =O\left(\frac{\exp(A_n^{4})}{n\eps_n}\right) =O\left(\frac{\exp((\log \log n)^4 )\log \log n}{n}\right) \to 0
$$
as $n \to \infty$.  Using \Cref{lem:cS,cor:blue_bound}, $\Pt(\cG) \to1$ thereby concluding the proof.
\end{proof}

\section{Bad events do not occur in stage 3}\label{sec:bad_event_D}
In this section we prove 
\begin{prop}\label{prop:D_to_zero}
We have 
$$
\Pt(\mathfrak D) \to 0
$$
as $n \to \infty$.    
\end{prop}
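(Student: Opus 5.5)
Recall from \Cref{cor:cond_law} (which extends verbatim to stage 3, since the argument there only used \Cref{lem:exp} and the induction over steps) that ${\sf f}$ can be hit in step $k+1$ only if the tip $X_k$ is a contracted vertex, i.e.\ $\tip_k>1$. If $\tip_k>1$, the probability of hitting ${\sf f}$ is, by the analogue of \Cref{lem:hitting_prob}, at most $|{\sf f}|/(n-\tip_k)$. Moreover, the stage 3 walks are short, because every step has a large chance of hitting $\tilde\cN_{\tauend}$. The plan is therefore to bound the expected number of stage 3 steps at which the tip is a non-singleton and the tip is small. A union bound with the $|{\sf f}|/n$ hitting probability then finishes the argument.

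First I would place ourselves on a good event from stage 2. By \Cref{lem:tauend_lower} and \Cref{lem:Nk_lower}, with $\Pt$-probability $1-o(1)$ we have $\tauend-\tauendone\ge n\eps_n$ and $N_{\tauend}\ge n\eps_n/2$. Consider any stage 3 walk started from a fresh vertex. While it has not yet hit the exposed set, each step hits $\cont(\tilde\cN_{\tauend})\setminus{\sf f}$ with conditional probability at least $(N_{\tauend}-|{\sf f}|)/n\ge \eps_n/3$. This follows from \Cref{lem:hitting_prob}; note that the already-exposed stage 3 vertices only enlarge the target. Hence the length of each stage 3 walk is dominated by a Geometric$(\eps_n/3)$ variable, independent of the past. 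The same is true for the size of any contracted tip formed within the walk, since the tip can only contain vertices exposed in the current walk.

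Next I would handle the steps where $\tip_k>1$. A cycle inside a stage 3 walk is created only when the walk hits its own current trajectory. If the walk has exposed $\ell$ vertices so far, this has probability at most $\ell/(n-\ell)$ per step. Summing over a walk of length $L$ gives probability $O(L^2/n)$ of ever creating a cycle, and in that case the number of steps with a non-singleton tip is at most $L$. So the expected number of steps with $\tip_k>1$ in a walk is $O(\E L^3/n)=O(\eps_n^{-3}/n)$. Combining, the probability that a given walk hits ${\sf f}$ is $O(|{\sf f}|\eps_n^{-3}n^{-2})$. There are at most $n$ walks in stage 3, because each walk starts from a distinct unexposed vertex. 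A union bound therefore gives $\Pt(\mathfrak D)\le o(1)+O(|{\sf f}|A_n^3/n)\to0$.

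The main obstacle is making the conditional-law statement rigorous for stage 3. \Cref{cor:cond_law} is formulated for the CLEB walk of stage 2. In stage 3, the walk starts at a fresh vertex and terminates upon hitting $\tilde\cN$, and contracted vertices created in stage 2 are never re-explored. I would re-run the induction of \Cref{cor:cond_law} with stage 3 steps included. The only new point is that a fresh vertex chosen as a starting point has not been conditioned on anything except avoiding ${\sf f}$, so \Cref{lem:exp} applies exactly as before. A secondary issue is that the domination bounds on walk lengths should be uniform over the $n$ walks. I would handle this by conditioning walk by walk on $\cF$ at the start of each walk and using that the lower bound $N_{\tauend}\ge n\eps_n/2$ is monotone in later times.
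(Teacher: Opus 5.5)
Your proposal is correct and follows essentially the same route as the paper. Both arguments use a stage 2 lower bound on $N_{\tauend}$, geometric tails for the stage 3 walk lengths, the fact that ${\sf f}$ can only be hit right after a walk closes a cycle on its own current trajectory (the paper's events $\cJ_k$), and a union bound over at most $n$ walks; the only differences are cosmetic, namely your threshold $N_{\tauend}\ge n\eps_n/2$ (walk lengths of order $A_n$, per-walk bound $O(A_n^3/n^2)$) versus the paper's $N_{\tauend}>n^{0.9}$ (walk lengths of order $n^{0.1}$, per-walk bound $O(n^{-1.4})$), and your explicit re-check that the conditional-law corollary carries over to stage 3, which the paper uses implicitly.
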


\begin{lemma}\label{lem:Ntauend}
For all $n\ge 1$,
    $$
    \Pt(N_\tauend >n^{0.9}) \ge 1-O(n^{-0.05}).
    $$
\end{lemma}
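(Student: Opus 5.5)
Since $N_k$ is nondecreasing in $k$, we split according to whether stage 2 runs for at least $n^{0.95}$ steps. On the event $\{\tauend-\tauendone \ge n^{0.95}\}$ we have $N_\tauend \ge N_{\tauendone+n^{0.95}}$. Note that $\tauendone = |{\sf f}|$ is a constant under $\Pt$.

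\textbf{Step 1: stage 2 is long.} Apply \Cref{lem:tauend_lower} with $\delta_n = n^{-0.05}$. This gives
$$\Pt(\tauend-\tauendone\le n^{0.95}) = O(n^{-0.05}).$$
The proof of that lemma is a union bound over $\delta_n n$ steps, each of hitting probability at most $1/(n-\delta_n n-1)$. It therefore gives the explicit bound $2\delta_n$ for every sequence with $\delta_n\le 1/2$, so the rate is quantitative.

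\textbf{Step 2: linear growth of $N$.} Apply \Cref{lem:Nk_lower} with $k = \lfloor n^{0.95}\rfloor$, which is at most $\delta_n n$ with $\delta_n = n^{-0.05}\to 0$. This yields
$$\Pt(N_k\le k/2,\ k\le\tauend)\le (2ek/n)^{k/2},$$
which is superpolynomially small since $2ek/n\to 0$. One small bookkeeping point: the lemma is stated in terms of the step index, while stage 2 begins at step $\tauendone$. Since $\tauendone$ is a fixed constant under the conditioning, shifting $k$ by $|{\sf f}|$ changes nothing. Also, $N_{\tauendone} = |{\sf f}|+1$, and the Bernoulli domination in that proof applies verbatim to the increments after $\tauendone$.

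\textbf{Combining.} On the intersection of the complements of the two bad events, we get
$$N_\tauend\ge N_{k}\ge k/2 = n^{0.95}/2 > n^{0.9}$$
for large $n$. Hence
$$\Pt(N_\tauend\le n^{0.9})\le O(n^{-0.05}) + (2ek/n)^{k/2} = O(n^{-0.05}).$$
Small $n$ are absorbed into the constant.

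The only mildly delicate point is checking that \Cref{lem:tauend_lower} holds with an explicit, uniform $O(\delta_n)$ constant for the polynomially decaying choice $\delta_n=n^{-0.05}$, rather than merely as $\delta_n\to0$. Its proof gives exactly this, so no real obstacle is expected.
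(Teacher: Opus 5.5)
Your proposal is correct and is essentially the paper's proof. The paper likewise splits $\Pt(N_{\tauend}\le n^{0.9})$ according to whether $\tauend-\tauendone\le n^{0.95}$. It bounds the first term by \Cref{lem:tauend_lower} with $\delta_n=n^{-0.05}$, and the second by the stretched-exponential estimate of \Cref{lem:Nk_lower}. Your extra bookkeeping makes explicit what the paper uses implicitly: the explicit $2\delta_n$ rate, $\tauendone=|{\sf f}|$ being deterministic under $\Pt$, and monotonicity of $N_k$.
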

\begin{proof}
   Using  \Cref{lem:Nk_lower} and \Cref{lem:tauend_lower}  with $\delta_n = n^{-0.05}$,
   \begin{equation*}
       \Pt(N_{\tauend} \le n^{0.9}) \le \Pt(\tauend-\tauendone \le n^{0.95}) + \Pt(\tauend-\tauendone>n^{0.95}, N_{\tauend} \le n^{0.9}). 
   \end{equation*}
   The second term above has stretched exponential bound from \Cref{lem:Nk_lower}, which allows us to conclude.
\end{proof}

For every $x \in {\sf V } \setminus \cN_{\tauend}$, we define $\mathfrak D_x$ to be the event that the CLEB walk started at $x$ hits ${\sf f}$. Observe that $$\mathfrak D = \cup_{x \in {\sf V } \setminus \cN_{\tauend}} \mathfrak D_x.$$ For every $x \in {\sf V}\setminus \cN_{\tauend}$, let $\eta_x$ be the number of steps taken by the CLEB walk started at $x$ until it stops. We first prove an upper bound on $$\eta_{\text{max}} = \max_{x \in {\sf V} \setminus \cN_{\tauend}}\eta_x.$$ We will make use of the conditional probability measure $\P^{\tauend}.$  Suppose we have run stage $3$ up to time $\zeta_x$ and exposed a set of vertices $\cN_{\zeta_x}$ and we start the CLEB walk from $x$ at time $\zeta_x$.
\begin{lemma}\label{lem:etamax}
There exists a constant $C>0$ such that for all $n \ge 1$, $m \ge 1$, $x \in {\sf V} \setminus \cN_{\tauend}$,
$$
\Pt(\eta_{x} >m | \cF_{\tauend})\mathds1_{N_{\tauend} > n^{0.9}} \le \exp(-Cmn^{-0.1}),
$$
almost surely.
\end{lemma}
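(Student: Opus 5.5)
The plan is to show that at every step of the CLEB walk started at $x$, conditionally on the past, the walk stops at the next step with probability at least $c\,n^{-0.1}$ (for a universal $c>0$). Iterating this bound will give the geometric tail $\exp(-Cmn^{-0.1})$. The walk from $x$ stops exactly when the exposed edge hits the set $\tilde\cN_{\zeta_x}$ of already exposed vertices of the contracted graph, or $\partial$. In terms of original vertices, the relevant target is $\cont(\tilde\cN_{\zeta_x})$. This set contains $\cN_{\tauend}$, since every vertex exposed in stage 2 is either a vertex of $G_{\zeta_x}$ or was contracted into one. It is also disjoint from the vertices currently inside the walk started at $x$, which has not yet merged with anything previously exposed. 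Stage 3 only adds exposed vertices, so on the event $N_{\tauend}>n^{0.9}$ the target always contains more than $n^{0.9}$ original vertices.

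Next I will bound the one-step stopping probability. Let $k$ be a step of the walk from $x$ before it stops, with tip $X_k$. The walk is still confined to vertices outside $\cN_{\tauend}$, so its tip never contains a vertex of ${\sf f}$. I will use the analogue of \Cref{lem:hitting_prob} for stage 3, which follows from \Cref{cor:cond_law} exactly as before since the conditioning structure is the same. It says the next exposed edge is uniform among outgoing edges of $\Tip_k$ with head outside $\Tip_k$, except that when $\tip_k=1$ the edges with head in ${\sf f}$ are excluded. Hence
$$
\Pt(\text{stop at step }k+1\mid\cF_k)\ \ge\ \frac{|\cont(\tilde\cN_{\zeta_x})\setminus{\sf f}|}{n}\ \ge\ \frac{n^{0.9}-|{\sf f}|}{n}\ \ge\ \tfrac12 n^{-0.1}
$$
for $n$ large, on $\{N_{\tauend}>n^{0.9}\}$. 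Small $n$ can be absorbed by shrinking $C$, since the bound is trivial when $Cmn^{-0.1}$ is bounded appropriately. Hitting $\partial$ only helps.

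Finally, the lower bound does not depend on $k$ or on the history. Iterating with the tower property over $m$ steps, exactly as in the proof of \Cref{lem:interval_small}, gives
$$
\Pt(\eta_x>m\mid\cF_{\tauend})\mathds1_{N_{\tauend}>n^{0.9}} \le \left(1-\tfrac12 n^{-0.1}\right)^m \le \exp\left(-\tfrac12 m n^{-0.1}\right).
$$
The only delicate point is justifying the uniformity claim in stage 3: that the minimum-weight outgoing edge from the tip is uniform over the allowed edges even though the conditioning now includes all of stage 2 and earlier stage-3 walks. This is the induction of \Cref{cor:cond_law} continued through stage 3. The inductive step uses only \Cref{lem:exp} and the fact that unexposed singletons still have i.i.d. weights conditioned on not pointing into ${\sf f}$. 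I would state this extension briefly and then conclude.
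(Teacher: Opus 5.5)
Your proposal is correct and is essentially the paper's proof: \Cref{lem:hitting_prob} gives a conditional one-step stopping probability of at least $(N_{\zeta_x}-|{\sf f}|)/n \ge (N_{\tauend}-|{\sf f}|)/n$, and iterating this over $m$ steps yields the geometric tail. The only imprecision is your treatment of small $n$: the bound is never ``trivial'' there, but since $\rho_R\in\cN_{\tauend}\setminus{\sf f}$ the stopping probability is always at least $1/n$, so a single $C$ depending only on $|{\sf f}|$ works for all $n\ge 1$, as in the paper.
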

\begin{proof}
Conditioned on the $\sigma$-algebra up to stage $k >\zeta_x$, and assuming the walk hasn't stopped by step $k$ and $N_{\tauend}>n^{0.9}$, the probability of stopping the CLEB walk in step $k+1$ is at least (by using \Cref{lem:hitting_prob}) 
$$
 \frac{N_{\zeta_x} - |{\sf f}|}{n} \ge \frac{N_{\tauend}-|{\sf f}|}{n} \ge \frac{C}{n^{0.1}}.
$$
for some $C>0$ depending on ${|{\sf f}|}$ and nothing else.
So the probability that $\eta_x $ is at least $m$ is at most 
$$
\left(1-\frac{C}{n^{0.1}}\right)^{m} \le \exp(-Cm/n^{0.1}).
$$
\end{proof}
For any $\eta_x>k >\zeta_x$, define $\cJ_k$ to be the event that the CLEB walk hits 
\begin{itemize}
    \item $\cN_k \setminus \cN_{\zeta_x}$ in step $k+1$ and
    \item {\sf f} in step $k+2$.
\end{itemize}
Define $$\cJ_x:=\cup_{\zeta_x <k\le \zeta_x+\eta_x}\cJ_k.$$
We observe the following fact. 
\begin{lemma}\label{lem:DandJ}
    We have 
    $$
    \mathfrak D \subseteq \cup_{x \in {\sf V}\setminus  \cN_{\tauend}}\cJ_x.
    $$
\end{lemma}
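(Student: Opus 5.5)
The inclusion $\mathfrak D \subseteq \cup_x \cJ_x$ is deterministic. I will prove it by following a single CLEB walk in stage 3 up to the first time it hits ${\sf f}$, and showing that the step just before must be a hit of the walk's own newly exposed vertices.

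Suppose $\mathfrak D$ occurs. Then there is some $x \in {\sf V}\setminus \cN_{\tauend}$ such that the CLEB walk started at $x$ at time $\zeta_x$ hits ${\sf f}$. Let $k+2$ be the first step of this walk at which ${\sf f}$ is hit. I need to show that in step $k+1$ the walk hit $\cN_k \setminus \cN_{\zeta_x}$, with $\zeta_x < k \le \zeta_x + \eta_x$.

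\textbf{The tip before step $k+2$ is a contracted vertex.} By \Cref{cor:cond_law}, parts b.\ and c., which apply equally to stage 3 (the induction there covers every $t \ge \tauendone$), an edge whose tail is a singleton vertex of ${\sf V}$ cannot have its head in ${\sf f}$. Equivalently, by \Cref{lem:hitting_prob}, ${\sf f}$ is hit with probability zero when $\tip=1$. So the tip $X_{k+1}$ satisfies $\tip_{k+1}>1$. Hence a cycle was contracted in step $k+1$, and the edge exposed in step $k+1$ hit a previously exposed vertex.

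\textbf{That vertex lies in the walk's own trajectory.} Any exposed vertex outside the current walk lies in $\tilde \cN_{\zeta_x}$, since stage 2 and earlier stage 3 walks exposed it. If step $k+1$ hit such a vertex, then by the stopping rules of the CLEB walk in \Cref{sec:CLEB_walk_new} (stage 3 runs until $\tilde\cN_{\zeta_x}$ is hit) the walk stops at step $k+1$. No cycle is contracted in that case, contradicting the previous step. Therefore the head of the edge exposed in step $k+1$ lies in $\cN_k\setminus \cN_{\zeta_x}$, the set of vertices exposed by this walk itself. This is exactly what $\cJ_k$ requires. Moreover $k > \zeta_x$, because the walk must have made at least one step before it can hit itself, and $k+2 \le \zeta_x+\eta_x+1$ keeps $k$ in the range of the union. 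So $\cJ_x$ occurs.

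\textbf{Main obstacle.} The delicate point is the edge case where the walk hits ${\sf f}$ right after a contraction produced by an earlier contraction, that is, when the tip at step $k+1$ comes from a chain of contractions. I plan to handle this with the same reasoning: the tip $X_{k+1}$ is a nontrivial contracted vertex created inside the current walk. The step that created $X_{k+1}$ is step $k+1$, and its exposed edge must have head in the current walk's exposed set. Here I would use that a contraction in step $k+1$ means the edge of step $k+1$ closes a cycle within $S_{k+1}\cap \vec E_{k}$. Since the walk's component was disjoint from older components until it hit them, and hitting them ends the walk, every vertex of that cycle lies in $\cN_k\setminus\cN_{\zeta_x}$.
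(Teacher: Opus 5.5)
Your proof is correct and follows the paper's own argument. The paper's one-line proof uses \Cref{cor:cond_law} to say ${\sf f}$ cannot be hit from a singleton tip, so the preceding step must have contracted a cycle. That contraction lies within the current walk, since hitting $\tilde\cN_{\zeta_x}$ would have stopped the walk, and this is exactly $\cJ_k$. You spell this out explicitly, and your ``main obstacle'' is not really one: in stage 3 the tip is non-singleton exactly when the immediately preceding step contracted a cycle, so chains of contractions are already covered.
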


\begin{proof}
    This follows from the description of local CLEB process as ${\sf f}$ cannot be hit if the tip is a single vertex (\Cref{cor:cond_law}).
\end{proof}


\begin{lemma}\label{lem:Jx}
    Fix $ x \in {\sf V} \setminus \cN_{\tauend}$. We have 
    $$
    \P^t(\cJ_x |\cF_{\tauend})\mathds1_{N_{\tauend} > n^{0.9}} \le \frac{C'}{n^{1.4}},
    $$
almost surely, for some $C'>0$.
\end{lemma}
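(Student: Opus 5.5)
We bound $\Pt(\cJ_x|\cF_{\tauend})$ by a union bound over the steps $k$ of the CLEB walk started at $x$. For each $k$, the event $\cJ_k$ requires two consecutive unlikely hits, and we control each one using \Cref{lem:hitting_prob}. The length of the walk is controlled by \Cref{lem:etamax}.

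We first split according to the length of the walk. Fix $m = n^{0.3}$, so that $\exp(-Cmn^{-0.1}) = \exp(-Cn^{0.2})$, which is far smaller than $n^{-1.4}$. On $\{N_{\tauend}>n^{0.9}\}$, \Cref{lem:etamax} gives
\[
\Pt(\cJ_x|\cF_{\tauend}) \le \Pt(\eta_x>m|\cF_{\tauend}) + \sum_{\zeta_x<k\le \zeta_x+m}\Pt(\cJ_k\cap\{\eta_x>k-\zeta_x\}|\cF_{\tauend}).
\]
Here $\zeta_x\ge\tauend$, and we condition progressively through $\cF_{\zeta_x}$. Since the bounds below are uniform in the history, the conditioning on $\cF_{\tauend}$ passes through the tower property without trouble.

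For a fixed $k$ we estimate the probability of $\cJ_k$ in two parts.
\begin{enumerate}[(i)]
\item \textbf{First hit.} At step $k+1$ the walk must hit $\cN_k\setminus\cN_{\zeta_x}$. This set has at most $k-\zeta_x\le m$ vertices. By \Cref{lem:hitting_prob}, conditionally on $\cF_k$, this has probability at most $m/(n-m-|{\sf f}|)$. This holds whether the tip has size $1$ or not, since the tip has size at most $k-\zeta_x+1\le m+1$.
\item \textbf{Second hit.} Given this, the tip after step $k+1$ is a contracted vertex of size at most $m+1$. By \Cref{lem:hitting_prob}, the probability of hitting ${\sf f}$ at step $k+2$ is at most $|{\sf f}|/(n-m-1)$.
\end{enumerate}
Multiplying these by the tower property, each term of the sum is $O(m|{\sf f}|/n^2)$. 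Summing over at most $m$ values of $k$ gives $O(m^2/n^2)=O(n^{-1.4})$ when $m=n^{0.3}$. Adding the $\exp(-Cn^{0.2})$ term yields $C'/n^{1.4}$.

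The main point needing care is the first hit. One could worry that the walk hits $\cN_{\zeta_x}$ (which would stop it) rather than $\cN_k\setminus\cN_{\zeta_x}$. That event is excluded by the definition of $\cJ_k$ together with the requirement $\eta_x>k-\zeta_x$, so only the small set $\cN_k\setminus\cN_{\zeta_x}$ contributes. We also need the size bound $|\cN_k\setminus\cN_{\zeta_x}|\le k-\zeta_x$, which is deterministic because each step adds at most one vertex. The choice $m=n^{0.3}$ balances the tail bound from \Cref{lem:etamax} against the quadratic factor $m^2/n^2$.
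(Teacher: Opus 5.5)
Your proof is correct and follows the paper's route: a union bound over the steps $k$ of the walk from $x$, the two consecutive hits each bounded via \Cref{lem:hitting_prob} to give $O(m|{\sf f}|/n^2)$ per step, and \Cref{lem:etamax} to control the length of the walk. The only difference is that you truncate at the deterministic level $m=n^{0.3}$, whereas the paper bounds the sum by $\eta_x^2|{\sf f}|/(n-\eta_x)^2$ and integrates this against the tail of $\eta_x$; your version is slightly cleaner, because every bound you use at step $k$ is then $\cF_k$-measurable.
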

\begin{proof}
We can write
 $$
 \P^t(\cJ_x |\cF_{\tauend})\mathds1_{N_{\tauend} > n^{0.9}} =\Et\left(\sum_{k \ge 1}\Et(\mathds1_{\cJ_k} | \cF_k)\mathds1_{\zeta_x < k \le \eta_x+\zeta_x} |\cF_{\tauend} \right)\mathds1_{N_{\tauend} > n^{0.9}}.
 $$
 Now observe that by \Cref{lem:hitting_prob},
 $$
 \Et(\mathds1_{\cJ_{k}} | \cF_k)\mathds1_{\zeta_x < k \le \eta_x+\zeta_x}  \le \frac{\eta_x}{n-\eta_x}\frac{|{\sf f}|}{n-\eta_x}\mathds1_{\zeta_x < k \le \eta_x+\zeta_x}, 
 $$
 almost surely. Indeed, at any step $k+1$,  the probability of hitting $\cN_k \setminus \cN_{\zeta_x}$, conditioned on $\cF_k$ is at most 
 $$
 \frac{|\cN_k \setminus \cN_{\zeta_x}|}{n-\tip_k} \le \frac{\eta_x}{n-\eta_x},
 $$
 since $\tip_k \le \eta_x$ and $|\cN_k \setminus \cN_{\zeta_x}| \le \eta_x$. Finally, the probability of hitting ${\sf f}$ at $k+2$ is at most $$\frac{|{\sf f}|}{n-\tip_k} \le \frac{|{\sf f}|}{n-\eta_x}$$
 for the same reason. Therefore,
 $$
 \sum_{k \ge 1}\Et(\mathds1_{\cJ_k} | \cF_k)\mathds1_{\zeta_x < k \le \eta_x+\zeta_x}  \le \frac{\eta_x^2 |{\sf f}|}{(n-\eta_x)^2},
 $$
 almost surely. Plugging this back in,
 \begin{equation*}
     \P^t(\cJ_x |\cF_{\tauend})\mathds1_{N_{\tauend} > n^{0.9}} \le \Et\left(\frac{\eta_x^2 |{\sf f}|}{(n-\eta_x)^2} \middle|\cF_{\tauend} \right)\mathds1_{N_{\tauend} > n^{0.9}},
 \end{equation*}
almost surely. Using the tail bound of $\eta_x$ obtained in \Cref{lem:etamax}, we see that 
 $$
 \Et\left(\frac{\eta_x^2 |{\sf f}|}{(n-\eta_x)^2} \middle|\cF_{\tauend} \right)\mathds1_{N_{\tauend} > n^{0.9}} \le \frac{n^{0.4}|{\sf f}|}{(n-n^{0.2})^2}n^{0.2}  +\sum_{m \ge n^{0.2}} m^2\exp\left(-\frac{Cm}{n^{0.1}}\right) \le \frac{C'}{n^{1.4}},
 $$
almost surely. We used the fact that $x \mapsto \frac{x^2}{(n-x)^2}$ is increasing in $x$ as long as $x <n$.
\end{proof}
\begin{proof}[Proof of \Cref{prop:D_to_zero}]
    Using \Cref{lem:Ntauend,lem:DandJ,lem:Jx}, we have 
    \begin{align*}
    \Pt(\mathfrak D) &\le  \Pt(\cup_{x \in {\sf V} \setminus \cN_{\tauend}}\cJ_x)\\
    & \le  \Pt(\cup_{x \in {\sf V} \setminus \cN_{\tauend}}\cJ_x|\cF_\tauend)\mathds1_{N_{\tauend}>n^{0.9}} + \Pt(N_{\tauend}\le n^{0.9})\\
    & \le \frac{C'n}{n^{1.4}}+ O\left(\frac{1}{n^{0.05}}\right) \to 0.
    \end{align*}
    This completes the proof.
\end{proof}
\section{Open problems}\label{sec:open}
In this article, we worked with i.i.d.\ Exponential$(1)$ distributed random variables. We believe \Cref{thm:main} to be true for any continuous density with positive support.
\begin{conjecture}
    \Cref{thm:main} is true for i.i.d.\ weights $(W_{\vec e})_{\vec e \in \vec E}$ with continuous distributions supported in $(0,\infty)$. \end{conjecture}
It is also a natural research direction to extend \Cref{thm:main} to other classes of graphs. We plan to tackle the following question for future works.
\begin{question}\label{question:extension}
    Naturally extend \Cref{thm:main} to other classes of graphs with i.i.d.\ continuously distributed weights with positive support (start with Exponential$(1)$ weights). Some natural candidates for such a sequence $(G_n)_{n \ge 1}$ are:
    \begin{enumerate}[a.]
    \item $G_n$ is the largest connected component in Erd\"os--R\'enyi graphs $G(n,p)$, both the dense setting ($p\in (0,1)$) and sparse setting $(p=\lambda/n) $ where $\lambda>1$. 
        \item $G_n$ is a sequence of tori in $\Z^d$ with volume $n^d$.
        \item $G_n$ is a  $d$-regular expander with $n$ vertices.  
        \item $G_n$ is a  $d$-regular tree of height $n$ (height is the maximal graph distance from the root) where the leaves are all `glued' or `wired' together.
    \end{enumerate}
\end{question}
We refer to \cite{expander_HLW} for a comprehensive survey of expander graphs and their properties and to \cite{bollobas01,janson_RG} for a survey of Erd\"os--R\'enyi random graphs.
For item a. in \Cref{question:extension}, the dense setting should be easier to handle than the sparse setting. In the sparse setting the largest component (called the giant component) has a geometry similar to an expander (item c.).

The existence of local limit for item d. was proved in \cite{RS24} for i.i.d.\ Exponential$(1)$ weights, but a more detailed description of the limit is unknown.  There is a natural class of $d$-regular expanders which converge locally to the infinite $d$-regular tree.
A natural question is whether the limit of the MSA in such $d$-regular expanders is the same as that of the wired $d$-regular tree. We conjecture this is the case.
\begin{conjecture}
    Let $G_n$ be an expander sequence converging locally to the $d$-regular tree. Then the local limit of item c. and d. in \Cref{question:extension} matches. 
\end{conjecture}
We believe item b. is more challenging and extremely interesting even in the i.i.d.\ Exponential$(1)$ case. Simulations show that the CLEB walk in $\Z^2$ is `transient' in the sense that the exposed edges in the contracted graph after $k$-steps ($S_k \cap \vec E_k$ in the language of this article) tend to an infinite path (see \cite[Fig 12]{RS24}). This suggests the existence of a weak limit in $\Z^2$, even in an almost sure sense. We leave this question for future investigation.

\bibliographystyle{abbrv}
\bibliography{MSA}
\end{document}